    \newcommand{\be}{\begin{equation}}
    \newcommand{\ee}{\end{equation}}
    \newcommand{\nrm}[1]{\left\| #1 \right\|}
    \def\f{\mbox{\boldmath $f$}}
    \def\x{\mbox{\boldmath $x$}}
    \def\v{\mbox{\boldmath $v$}}
    \def\w{\mbox{\boldmath $w$}}
    \newcommand\dt {{\Delta t}}
     \def\0{\mbox{\boldmath $0$}}
	\newtheorem{thm}{Theorem}[section]
	\newtheorem{prop}[thm]{Proposition}
	\newtheorem{cor}[thm]{Corollary}
	\newtheorem{lem}[thm]{Lemma}
	\newtheorem{rem}[thm]{Remark}
	\newtheorem{defi}{Definition}
\begin{document}
	\title{A third order exponential time differencing numerical scheme for no-slope-selection epitaxial thin film model with energy stability}
	
	\author{
Kelong Cheng \thanks{School of Science, Southwest University of Science and Technology,  Mianyang, Sichuan 621010, P. R. China (zhengkelong@swust.edu.cn)}
\and
Zhonghua Qiao\thanks{Department of Applied Mathematics, The Hong Kong Polytechnic University, Hung Hom, Kowloon, Hong Kong (zqiao@polyu.edu.hk)}
\and		
Cheng Wang\thanks{Department of Mathematics, The University of Massachusetts, North Dartmouth, MA  02747, USA (Corresponding Author: cwang1@umassd.edu)}
}

	\maketitle
	\numberwithin{equation}{section}

	\begin{abstract}
In this paper we propose and analyze a (temporally) third order accurate exponential time differencing (ETD) numerical scheme for the no-slope-selection (NSS) equation of the epitaxial thin film growth model, with Fourier pseudo-spectral discretization in space. A linear splitting is applied to the physical model, and an ETD-based multistep approximation is used for time integration of the corresponding equation. In addition, a third order accurate Douglas-Dupont regularization term, in the form of $-A \dt^2 \phi_0 (L_N)  \Delta_N^2 ( u^{n+1} - u^n)$, is added in the numerical scheme. A careful Fourier eigenvalue analysis results in the energy stability in a modified version, and a theoretical justification of the coefficient $A$ becomes available. As a result of this energy stability analysis, a uniform in time bound of the numerical energy is obtained. And also, the optimal rate convergence analysis and error estimate are derived in details, in the $\ell^\infty (0,T; H_h^1) \cap \ell^2 (0,T; H_h^3)$ norm, with the help of a careful eigenvalue bound estimate, combined with the nonlinear analysis for the NSS model. This convergence estimate is the first such result for a third order accurate scheme for a gradient flow. Some numerical simulation results are presented to demonstrate the efficiency of the numerical scheme and the third order convergence. The long time simulation results for $\varepsilon=0.02$ (up to $T=3 \times 10^5$) have indicated a logarithm law for the energy decay, as well as the power laws for growth of the surface roughness and the mound width. In particular, the power index for the surface roughness and the mound width growth, created by the third order numerical scheme, is more accurate than those produced by certain second order energy stable schemes in the existing literature.
	\end{abstract}
	
\noindent
{\bf Key words.} \, epitaxial thin film growth, slope selection, exponential time differencing,  energy stability, optimal rate convergence analysis, aliasing error
	
\medskip
	
\noindent
{\bf AMS Subject Classification} \, 35K30, 35K55, 65L06, 65M12, 65M70, 65T40

\section{Introduction}
In this article we consider an epitaxial thin film growth equation, which corresponds to the gradient flow associated with the following energy functional
\begin{equation}\label{energy-NSS}
E(u) := \int_{\Omega}\left(- \frac{1}{2} \ln(1+|\nabla u|^2) +\frac{\varepsilon^2}{2} |\Delta u|^2\right)\mbox{d} \textbf{x},
\end{equation}
where $\Omega = (0, L_x) \times (0, L_y)$, $u:\Omega\rightarrow \mathbb{R}$ is a periodic height function, and  $\varepsilon$ is a constant. In more details, the first non-quadratic term represents the Ehrlich-Schwoebel (ES) effect, according to which migrating adatoms must overcome a higher energy barrier to stick to a step from an upper rather than from a lower terrace~\cite{Ehrlich1966, libo06, libo03, Schwoebel1969}.  This results in an uphill atom current in the dynamics and the steepening of mounds in the film.  The second term, which is quadratic, but of higher-order, represents the isotropic surface diffusion effect~\cite{libo03, moldovan00}. In turn, the chemical potential becomes the following variational derivative of the energy
\begin{equation}  \label{NSS-chem pot}
 \mu := \delta_{u}E = \nabla\cdot\left(\frac{\nabla u}{1+|\nabla u|^2}\right) + \varepsilon^2 \Delta^2 u,
\end{equation}
and the no-slope-selection (NSS) equation stands for the $L^2$ gradient flow
\begin{equation}\label{equation-NSS}
\partial_tu = - \mu = -\nabla\cdot\left(\frac{\nabla u}{1+|\nabla u|^2}\right) - \varepsilon^2 \Delta^2 u .
\end{equation}

In the small-slope regime, where $|\nabla u|^2\ll 1$, (\ref{equation-NSS}) may be approximated as
	\begin{equation}
\partial_t u =  \nabla \cdot \left(|\nabla u|^2\nabla u \right) - \Delta u - \varepsilon^2 \Delta^2 u \ ,
	\label{equation-SS}
	\end{equation}
with the energy functional given by
\begin{equation}  \label{energy-SS}
E(u) = \int_{\Omega}\left( \frac14 ( | \nabla u |^2 -1)^2 +\frac{\varepsilon^2}{2} |\Delta u|^2\right)\mbox{d} \textbf{x} .
\end{equation}
This model is referred to as the slope-selection (SS) equation~\cite{kohn06, kohn03, libo03, moldovan00}.  A solution to (\ref{equation-SS}) exhibits pyramidal structures, where the faces of the pyramids have slopes $|\nabla u| \approx 1$; meanwhile, the no-slope-selection equation (\ref{equation-NSS}) exhibits mound-like structures, and the slopes of which (on an infinite domain) may grow unbounded~\cite{libo03, wang10}. On the other hand, both solutions have up-down symmetry in the sense that there is no way to distinguish a hill from a valley.  This can be altered by adding adsorption/desorption or other dynamics.


In \cite{libo03}, the global in time well-posedness for two nonlinear models of epitaxial thin film epitaxy, with or without slope selection, was established. And also, the gradient bound and the energy asymptotic law for the SS and NSS equations have been studied in~\cite{kohn03, libo04, LiD2017c}, as $\varepsilon \to 0$. In addition, the large-system asymptotic form of the minimum energy and the magnitude of gradients of energy-minimizing surfaces for epitaxial growth models are analyzed in~\cite{libo06}, with infinite or finite ES barrier. Specially, for the case of a finite ES effect (corresponding to the model in this article), the well-posedness of the initial-boundary-value problem is proved and the bounds for the scaling laws of interface width, surface slope and energy are obtained.

There have been many efforts to devise and analyze numerical schemes for both the SS and NSS equations; see the related references~\cite{chenwang12, libo03, qiao12, qiao12b, xu06}, etc. In particular, the numerical schemes with high order accuracy and energy stability have been of great interests, due to the long time nature of the gradient flow coarsening process. 
Among the energy stable numerical approaches, the idea of convex splitting has attracted many attentions. For the epitaxial thin film growth models, the first such work was reported in~\cite{wang10}, in which the authors studied unconditionally energy stable schemes, based on the convex-concave decomposition of the energy, motivated by Eyre's pioneering work~\cite{eyre98}. Some other related developments on the energy stable schemes for the MBE models could be found in~\cite{chen12, chen14, feng17a, Ju17, LiD2016a, LiW18, LiX17, qiao15, qiao17, shen12, yang17b}, etc. In particular, it is worthy of mentioning the works~\cite{chen12, LiW18}, in which the authors proposed linear numerical schemes for the NSS equation, with first and second order temporal accuracy orders, respectively, so that the energy stability could be established at a theoretical level. In fact, the following subtle fact has played an essential role in the nonlinear energy stability analysis: in spite of its complicated form in the denominator, the nonlinear term in the NSS equation~\eqref{equation-NSS} has automatically bounded higher order derivatives in the $L^\infty$ norm. In addition, extensive convergence analysis works have been undertaken for these various energy stable numerical schemes, for both the SS and NSS equations.

  On the other hand, it is observed that, a theoretical analysis of a (temporally) third order accurate numerical scheme for the gradient equations remains an open problem. In this article, we propose and analyze a (temporally) third order accurate  numerical scheme for the NSS equation~\eqref{equation-NSS}, based on the exponential time differencing (ETD) temporal algotirhm, combined with Fourier pseudo-spectral approximation in space. In general, an exact integration of the linear part of the NSS equation is involved in the ETD-based scheme, followed by multi-step explicit approximation of the temporal integral of the nonlinear term~\cite{Suli14, Beylkin98, Cox02, Hochbruck10, Hochbruck11}. An application of such an idea to various gradient models has been reported in recent works~\cite{Ju17, Ju14, Ju15a, Ju15b, wangx16, zhu16}, with the high order accuracy and preservation of the exponential behavior observed in the numerical experiments. At the theoretical side, some related stability and convergence analyses have also been reported for a few first and second order accurate numerical algorithms, while a theoretical justification for the third order one has not been available. To overcome such a difficulty, we make use of an alternate splitting idea, as reported in~\cite{Ju17}: to combine the surface diffusion term with an auxiliary linear diffusion term, and make the corresponding revision in the explicit extrapolation part. As a result of this splitting, the energy stability for the first order ETD scheme has been established in the reported work.

Meanwhile, it is observed that, a theoretical justification of the energy stability for the higher-order ETD-based schemes becomes very challenging, due to the explicit treatment of the nonlinear terms, as well as their multi-step nature. In the alternate energy inequality derived for the second order ETD-based scheme, as reported in~\cite{Ju17}, some positive increase of the numerical energy becomes possible, so that a uniform-in-time bound for the numerical energy is not theoretically available any more. To overcome this difficulty, we add a third order Douglas-Dupont regularization term in the ETD-based scheme, namely in the form of $-A \dt^2 \phi_0 (L_N)  \Delta_N^2 ( u^{n+1} - u^n)$. Furthermore, a careful eigenvalue analysis in the Fourier space enables us to derive a rigorous stability estimate for a modified energy function, which contains the original energy functional and a few non-negative numerical correction terms. As a result of this modified energy stability, we are able to derive a uniform-in-time bound for the original energy functional bound. 

  Moreover, we provide a theoretical proof of an $O (\dt^3 + h^m)$ rate convergence estimate for the proposed third order ETD-based scheme, in the $\ell^\infty(0,T; H_h^1) \cap \ell^2 (0, T; H_h^3)$ norm. To obtain such a convergence analysis, we have to decompose the numerical scheme into two stages: the exponential integration for the linear part is considered in the first stage, with an intermediate variable introduced, and the explicit multi-step extrapolation for the nonlinear part is involved in the second stage. Error estimates are carried out in both stages, with extensive applications of linearized stability analysis in the second stage. One key difficulty in the analysis for higher order ETD-based numerical scheme is associated with various global operators involved in the algorithm, as well as their inverse operators. To derive a uniform bound for these operators, we perform careful eigenvalue estimates for these operators, as well as their composition. In addition, 
an aliasing error control technique has to be utilized in the $\ell^\infty (0, T, H_h^1)$ error estimate, combined with extensive scaling law arguments between $\dt$ and $h$. As a result of these careful estimates, the derived convergence estimate becomes unconditional, i.e., no scaling law between $\dt$ and $h$ is needed to ensure the convergence result. To our knowledge, it is the first such result for a third order accurate scheme for a gradient flow.

The long time simulation results for the coarsening process have indicated a logarithm law for the energy decay, as well as the power laws for growth of the surface roughness and the mound width. In particular, the power index for the surface roughness and the mound width growth, created by the proposed third order ETD-based scheme, is more accurate than those created by certain second order schemes in the existing literature, with the same numerical resolution. This experiment has demonstrated the robustness of the proposed third order numerical scheme.

The rest of the article is organized as follows. In Section~\ref{sec-num scheme} we present the numerical scheme. First we review the Fourier pseudo-spectral approximation in space and certain technical lemma to control the aliasing error. Afterward, the third order ETD-based scheme is introduced, and a modified energy stability is established.   Subsequently, the $\ell^\infty(0,T; H_h^1) \cap \ell^2 (0, T; H_h^3)$ convergence estimate is provided in Section~\ref{sec-convergence}. In Section~\ref{sec:numerical results} we present the numerical results, including the accuracy test and the long time simulation for the coarsening process.   Finally, the concluding remarks are given in Section~\ref{sec:conclusion}.


	\section{The numerical scheme}
	\label{sec-num scheme}
	
\subsection{Review of the Fourier pseudo-spectral approximation}

For simplicity of presentation, we assume that the domain is given by $\Omega = (0,L)^2$, $N_x = N_y = N$ and $N \cdot h = L$. A more general domain could be treated in a similar manner. Furthermore, to facilitate the pseudo-spectral analysis in later sections, we set $N = 2K+1$. All the variables are evaluated at the regular numerical grid $(x_i, y_j)$, with $x_i = i h$, $y_j=jh$, $0 \le i , j \le 2K +1$.

Without loss of generality, we assume that $L=1$. For a periodic function $f$ over the given 2-D numerical grid, set its discrete Fourier expansion as
\begin{equation}
  f_{i,j} = \sum_{k,\ell=-K}^{K}
   \hat{f}_{k,\ell} \exp \left( 2 \pi {\rm i} ( k x_i + \ell y_j ) \right) ,
   \label{spectral-coll-1}
\end{equation}
its collocation Fourier spectral approximations to first and second order partial derivatives in the $x$-direction become
\begin{eqnarray}
  \left( {\cal D}_{Nx} f \right)_{i,j} = \sum_{k,\ell=-K}^{K}
   \left( 2 k \pi {\rm i} \right) \hat{f}_{k,\ell}
   \exp \left( 2 \pi {\rm i} ( k x_i + \ell y_j ) \right) ,
   \label{spectral-coll-2-1}
\\
  \left( {\cal D}_{Nx}^2 f \right)_{i,j} = \sum_{k,\ell=-K}^{K}
   \left( - 4 \pi^2 k^2 \right) \hat{f}_{k,\ell}
   \exp \left( 2 \pi {\rm i} ( k x_i + \ell y_j) \right) .
   \label{spectral-coll-2-3}
\end{eqnarray}
The differentiation operators in the $y$ direction, namely, ${\cal D}_{Ny}$ and ${\cal D}_{Ny}^2$, could be defined in the same fashion. In turn, the discrete Laplacian, gradient
and divergence become
\begin{eqnarray}
  \Delta_N f =  \left( {\cal D}_{Nx}^2  + {\cal D}_{Ny}^2 \right) f ,  \nonumber
\\
  \nabla_N f = \left(  \begin{array}{c}
  {\cal D}_{Nx} f  \\
  {\cal D}_{Ny} f
  \end{array}  \right)  ,  \quad
  \nabla_N \cdot \left(  \begin{array}{c}
  f _1 \\
  f _2
  \end{array}  \right)  = {\cal D}_{Nx} f_1 + {\cal D}_{Ny} f_2 ,
  \label{spectral-coll-3}
\end{eqnarray}
at the point-wise level. 
See the derivations in the related references~\cite{Boyd2001, canuto82, Gottlieb1977}, etc.

In addition, with an introduced operator $L_N = \varepsilon^2 \Delta_N^2 - \kappa \Delta_N$, which will be repeatedly used in this work, the operators 
$(\dt L_N)^{-1}$ and ${\rm e}^{- \dt L_N}$ are defined as
\begin{eqnarray}
  &&
  \left( (\dt L_N)^{-1} f \right)_{i,j} = \sum_{k,\ell=-K}^{K}
   \frac{1}{\dt \Lambda_{k,\ell}} \hat{f}_{k,\ell}
   \exp \left( 2 \pi {\rm i} ( k x_i + \ell y_j) \right) ,  \label{spectral-coll-4-2}
 \\
  &&
  \left( {\rm e}^{- \dt L_N} f \right)_{i,j} = \sum_{k,\ell=-K}^{K}
   {\rm e}^{- \dt \Lambda_{k,\ell}} \hat{f}_{k,\ell}
   \exp \left( 2 \pi {\rm i} ( k x_i + \ell y_j) \right) ,  \label{spectral-coll-4-3}
\\
  &&  \mbox{with} \quad
  \Lambda_{k,\ell} = \varepsilon^2 \lambda_{k,\ell}^2 + \kappa \lambda_{k,\ell} ,  \quad
  \lambda_{k,\ell} = (2 k \pi)^2 + (2 \ell \pi)^2 ,
  \label{spectral-coll-4-4}
\end{eqnarray}
for a grid function $f$ with the discrete Fourier expansion as (\ref{spectral-coll-1}), with a zero-mean: $\overline{f} := h^2 \sum_{i,j=0}^{N-1}  f_{i,j} = 0$ (so that $\hat{f}_{0,0}=0$). Similarly, for a zero-mean grid function $f$, the operator $( I - {\rm e}^{-\dt L_N} )^{-1}$ is defined as
\begin{eqnarray}
  \left( ( I - {\rm e}^{-\dt L_N} )^{-1} f \right)_{i,j} = \sum_{k,\ell \ne \0}
   \frac{1}{1 - {\rm e}^{- \dt \Lambda_{k,\ell}} } \hat{f}_{k,\ell}
   \exp \left( 2 \pi {\rm i} ( k x_i + \ell y_j) \right) .
   \label{spectral-coll-5}
\end{eqnarray}

  Given any periodic grid functions $f$ and $g$ (over the 2-D numerical grid), the spectral approximations to the $L^2$ inner product and $L^2$ norm are introduced as
\begin{eqnarray}
  \left\| f \right\|_2 = \sqrt{ \left\langle f , f \right\rangle } ,  \quad \mbox{with} \quad
  \left\langle f , g \right\rangle  = h^2 \sum_{i,j=0}^{N -1}   f_{i,j} g_{i,j} .
  \label{spectral-coll-inner product-1}
\end{eqnarray}
A careful calculation yields the following formulas of summation by parts at the discrete level (see the related discussions~\cite{chen12, chen14, gottlieb12a, gottlieb12b}):
\begin{eqnarray}
  \left\langle f ,  \Delta_N  g  \right\rangle
  = - \left\langle \nabla_N f ,  \nabla_N g   \right\rangle  ,    \quad
  \left\langle f ,  \Delta_N^2  g  \right\rangle
  =  \left\langle \Delta_N f ,  \Delta_N g   \right\rangle  .
  \label{spectral-coll-inner product-3}
\end{eqnarray}



In addition to the standard $\ell^2$ norm, we also introduce the $\ell^p$ and discrete maximum norms for a grid function $f$, to facilitate the analysis in later sections:
\begin{equation}
 \nrm{f}_{\infty} := \max_{i,j} |f_{i,j}| ,   \qquad
 \nrm{f}_{p}  := \Bigl( \sum_{i,j=0}^{N-1} |f_{i,j} |^p \Bigr)^{\frac{1}{p}} , \quad 1\leq p < \infty.  \label{spectral-defi-Lp}
\end{equation}
Moreover, for any numerical solution $\phi$, the discrete energy is defined as
	\be
E_N (\phi) = E_{c,1,N} (\phi) + \frac{\varepsilon^2}{2} \nrm{\Delta_N \phi}_2^2 \  , \quad E_{c,1,N} (\phi) = h^2 \sum_{i,j=0}^{N-1} \left( - \frac12 \ln \left( 1 + \left| \nabla_N \phi \right|^2 \right)_{i,j} \right) .
	\label{energy-discrete-spectral}
	\ee
	
Meanwhile, an appearance of aliasing error in the nonlinear term poses a serious challenge in the numerical analysis of Fourier pseudo-spectral scheme. To overcome such a well-known difficulty, we introduce a periodic extension of a grid function and a Fourier collocation interpolation operator.

\begin{defi}
  For any periodic grid function $f$ defined over a uniform 2-D numerical grid,
we denote $f_N$ as its periodic extension. In more detail,
assume that the grid function $f$ has a discrete Fourier expansion as
(\ref{spectral-coll-1}),
its continuous extension (projection) into ${\cal P}_K$ (the space of trigonometric polynomials of degree at most $K$) is given by
\begin{equation}
   f_N (\x)  = \sum_{k,\ell=-K}^{K}
   \hat{f}_{k,\ell}
     \exp \left( 2 \pi {\rm i} ( k x + \ell y) \right) .
    \label{spectral-coll-projection-2}
\end{equation}
And also, for any periodic continuous function $\f$, which may contain larger wave length, its collocation interpolation operator is defined as
\begin{eqnarray}
  & &
  \f_{i,j}  = \sum_{k,\ell=-K}^{K}  (\hat{f}_c)_{k,\ell}
     \exp \left( 2 \pi {\rm i} ( k x_i + \ell y_j) \right) ,   \nonumber
\\
  & &
    P_c^N \f_N (\x)  = \sum_{k,\ell=-K}^{K}  (\hat{f}_c)_{k,\ell}
     \exp \left( 2 \pi {\rm i} ( k x + \ell y) \right) ,
    \label{spectral-coll-projection-3}
\end{eqnarray}
in which the Fourier collocation coefficients $(\hat{f}_c)_{k, \ell}$ could be obtained by discrete Fourier transformation. Notice that $\hat{f}_c$ may not be the Fourier coefficients of $\f$, due to the truncation and aliasing errors.
\end{defi}

To overcome a key difficulty associated with the $H^m$ bound of the nonlinear term obtained by collocation interpolation, the following lemma is introduced. In fact, the case of $k_0=0$ was proven in earlier works~\cite{E92, E93}. The case of $k_0 \ge 1$ was analyzed in a recent article~\cite{gottlieb12b}.

\begin{lem} \label{lemma:aliasing error-1}
For any $\varphi \in {\cal P}_{m K}$ in dimension $d$, we have
\begin{equation}
  \left\| P_c^N \varphi \right\|_{H^{k_0}}
  \le  \left( \sqrt{m} \right)^d  \left\|  \varphi \right\|_{H^{k_0}} ,
  \quad \forall k_0 \in \mathbb{Z} ,  \, k_0 \ge 0 .
   \label{spectral-coll-projection-4}
\end{equation}
\end{lem}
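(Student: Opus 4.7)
The plan is to prove the lemma by direct Fourier analysis of the aliasing sum defining $P_c^N$. First, I would expand $\varphi \in \mathcal{P}_{mK}$ as a trigonometric polynomial $\varphi(\mathbf{x}) = \sum_{|k'|,|\ell'| \le mK} \hat{\varphi}_{k',\ell'} \exp(2\pi {\rm i}(k' x + \ell' y))$ (writing in 2-D for concreteness; the $d$-D case is identical), and exploit the standard aliasing identity
\[
(\hat{f}_c)_{k,\ell} \;=\; \sum_{\substack{|k'|,|\ell'| \le mK \\ k' \equiv k,\ \ell' \equiv \ell \pmod{N}}} \hat{\varphi}_{k',\ell'},
\]
which follows from evaluating $\varphi$ at the collocation points $(x_i,y_j)$ and applying the orthogonality of the discrete Fourier basis on the $N \times N$ grid with $N = 2K+1$. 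A careful counting shows that for each reduced index $(k,\ell) \in \{-K,\ldots,K\}^2$ there are at most $m$ admissible values in each coordinate direction, so the aliasing sum contains at most $m^d$ terms in dimension $d$.

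Next, I would apply the Cauchy--Schwarz inequality to the finite aliasing sum to get
\[
|(\hat{f}_c)_{k,\ell}|^2 \;\le\; m^d \sum_{k' \equiv k,\ \ell' \equiv \ell} |\hat{\varphi}_{k',\ell'}|^2.
\]
The key analytic ingredient is then to insert the Sobolev weights without losing the factor. Observe that whenever an aliased frequency differs from the reduced one, say $k' = k + jN$ with $j \neq 0$, one has $|k'| \ge N - |k| \ge K + 1 > |k|$, and analogously in $\ell$. Hence
\[
(1 + |k|^2)^{k_0} (1 + |\ell|^2)^{k_0} \;\le\; (1 + |k'|^2)^{k_0} (1 + |\ell'|^2)^{k_0}
\]
for every aliased mode, so the Sobolev weight at the reduced frequency can be absorbed into the sum over aliased frequencies. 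This monotonicity along each aliasing class is what makes the argument work for arbitrary $k_0 \ge 0$, and it reduces to the trivial equality at $k_0 = 0$.

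Finally, I would sum the weighted pointwise bound over the reduced frequency cube $\{-K,\ldots,K\}^d$ and apply Parseval's identity on both sides. The crucial structural fact here is that the aliasing classes partition the frequency support $\{-mK,\ldots,mK\}^d$ of $\varphi$ disjointly, so no mode $\hat{\varphi}_{k',\ell'}$ is counted twice on the right-hand side; this yields $\|P_c^N \varphi\|_{H^{k_0}}^2 \le m^d \|\varphi\|_{H^{k_0}}^2$ and hence the claim after taking square roots. The main obstacle I anticipate is the combinatorial mode-counting: verifying that the aliased count in each direction is exactly $\le m$ (not $m+1$) requires a careful case analysis at the boundary indices $k = \pm K$, using $N = 2K+1$ in an essential way. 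Once this counting and the weight monotonicity are in place, the remainder is a routine application of Cauchy--Schwarz and Parseval, and one recovers the already-known cases $k_0 = 0$ from~\cite{E92,E93} and $k_0 \ge 1$ from~\cite{gottlieb12b} as special instances of the same proof.
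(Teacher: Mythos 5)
Your proposal is correct, and it fills in a proof that the paper itself does not supply: the paper simply cites the known results (\cite{E92,E93} for $k_0=0$ and \cite{gottlieb12b} for $k_0\ge 1$) and states the lemma without argument. The route you describe --- the aliasing identity $(\hat f_c)_{\mathbf k}=\sum_{\mathbf k'\equiv\mathbf k\ (\mathrm{mod}\ N)}\hat\varphi_{\mathbf k'}$, the count of at most $m$ aliased modes per coordinate direction, Cauchy--Schwarz on the aliasing class, monotonicity of the Sobolev weight along each class, and Parseval over the disjoint partition of $\{-mK,\dots,mK\}^d$ --- is in substance the standard proof found in the cited references, so you have independently reconstructed the intended argument rather than found a new one. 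The two points you flag as delicate are indeed the only ones that need care, and both check out: the interval of admissible shifts $j$ with $|k+jN|\le mK$ has length $2mK/(2K+1)<m$ and hence contains at most $m$ integers (this is where $N=2K+1$ enters), and for $j\ne 0$ one has $|k+jN|\ge N-|k|\ge K+1>|k|$, so the weight at the reduced frequency is dominated by the weight at every aliased frequency. One cosmetic remark: you wrote the Sobolev weight in the tensor-product form $(1+|k|^2)^{k_0}(1+|\ell|^2)^{k_0}$, whereas the $H^{k_0}$ norm uses $(1+|\mathbf k|^2)^{k_0}$ with $|\mathbf k|^2=k^2+\ell^2$; since each component's modulus can only increase under aliasing, the monotonicity holds for either weight and the argument is unaffected, but the final display should be stated with the weight matching the norm in \eqref{spectral-coll-projection-4}.
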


On the other hand, for $f \notin {\cal P}_{m K}$, which may come from the nonlinearity in the denominator (such as the NSS model), the following aliasing error control estimate has to be applied. This inequality has been derived in an earlier work~\cite{canuto82}; we cite the result here.

\begin{lem} \label{lemma:aliasing error-2}
As long as $f$ and all its derivatives (up to $m$-th order) are continuous and periodic on $\Omega$, the convergence of the derivatives of the interpolation is given by
\begin{eqnarray}
  \| f - P_c^N f \|_{H^k}
  \leq C \| f \|_{H^m} h^{m-k} , \quad
\mbox{for} \, \, \, 0 \le k \le m ,  \, m > \frac{d}{2} .
   \label{spectral-coll-projection-5}
\end{eqnarray}
\end{lem}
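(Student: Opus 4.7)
The plan is to decompose the error into a spectral truncation piece and a pure aliasing piece, and estimate each via the Fourier coefficients of $f$. Let $P_N f$ denote the $L^2$-orthogonal truncation of $f$ onto $\mathcal{P}_K$ (keeping Fourier modes $(p,q)$ with $|p|,|q|\le K$). Then
\begin{equation*}
f - P_c^N f = (f - P_N f) + (P_N f - P_c^N f),
\end{equation*}
and it suffices to bound each term by $C h^{m-k} \|f\|_{H^m}$.

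For the truncation error, Parseval's identity gives
\begin{equation*}
\|f - P_N f\|_{H^k}^2 = \sum_{\max(|p|,|q|)>K} (1+\lambda_{p,q})^k |\hat{f}_{p,q}|^2 \le \Bigl(\sup_{\max(|p|,|q|)>K} (1+\lambda_{p,q})^{k-m}\Bigr) \|f\|_{H^m}^2,
\end{equation*}
and since $\lambda_{p,q} \gtrsim K^2$ on this range while $K \sim h^{-1}$, the supremum is $O(h^{2(m-k)})$, which is the required bound.

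For the aliasing error, the collocation coefficients $(\hat{f}_c)_{p,q}$ with $|p|,|q|\le K$ satisfy the Poisson-summation identity
\begin{equation*}
(\hat{f}_c)_{p,q} = \sum_{(j_1,j_2)\in\mathbb{Z}^2} \hat{f}_{p+Nj_1,\,q+Nj_2}, \qquad N = 2K+1,
\end{equation*}
so the aliasing contribution at mode $(p,q)$ is $\sum_{(j_1,j_2)\ne(0,0)} \hat{f}_{p+Nj_1,q+Nj_2}$. Applying Cauchy-Schwarz with the weight $(1+|p+Nj_1|^2+|q+Nj_2|^2)^{-m}$, convergence of the resulting dual sum $\sum_{(j_1,j_2)\ne 0} |(j_1,j_2)|^{-2m}$ requires precisely the assumption $m > d/2$; this dual sum is $O(K^{-2m})$, and combining with $(1+\lambda_{p,q})^k \lesssim K^{2k}$ over the retained modes yields
\begin{equation*}
\|P_N f - P_c^N f\|_{H^k}^2 \le C K^{-2(m-k)} \|f\|_{H^m}^2 = C h^{2(m-k)} \|f\|_{H^m}^2.
\end{equation*}

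The routine part is the truncation estimate; the main obstacle is the aliasing step, where one must carefully establish the Poisson-summation identity for $(\hat{f}_c)_{p,q}$ at the collocation grid (which requires enough smoothness for pointwise Fourier-series convergence of $f$, guaranteed by $m > d/2$ via Sobolev embedding) and then carry out the Cauchy-Schwarz step so that the $K^{-m}$ decay emerges cleanly from the aliased tails. The threshold $m > d/2$ is exactly what makes the dual sum $\sum_{(j_1,j_2)\ne 0} |(j_1,j_2)|^{-2m}$ finite in dimension $d$, which is why that hypothesis cannot be weakened in this approach.
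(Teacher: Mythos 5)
Your proposal is correct, but note that the paper does not actually prove Lemma~\ref{lemma:aliasing error-2}: it is quoted as a known result from~\cite{canuto82}, and your argument is precisely the classical proof underlying that citation. The decomposition into the spectral truncation error $f-P_Nf$ (handled by Parseval and the weight comparison $(1+\lambda_{p,q})^{k-m}\le CK^{-2(m-k)}$ on the discarded modes) plus the pure aliasing error $P_Nf-P_c^Nf$ (handled by the Poisson-summation identity for the collocation coefficients and Cauchy--Schwarz against the weight $(1+\lambda)^{-m}$) is the standard route, and your identification of $m>d/2$ as exactly the condition making the dual sum $\sum_{(j_1,j_2)\neq 0}|(j_1,j_2)|^{-2m}$ converge --- and simultaneously justifying the pointwise aliasing identity via absolute convergence of the Fourier series --- is the right accounting of where the hypothesis enters. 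The only detail worth making explicit in a full write-up is that the aliased frequency sets $\{(p+Nj_1,q+Nj_2)\}$ are pairwise disjoint as $(p,q)$ ranges over the retained modes and $(j_1,j_2)$ over $\mathbb{Z}^2\setminus\{0\}$, so that the second Cauchy--Schwarz factor sums to at most $\|f\|_{H^m}^2$ over all retained $(p,q)$; this is implicit in your final display but should be stated.
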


\subsection{The proposed third order ETD-based numerical scheme}

In the derivation of ETD-based numerical schemes, we rewrite the NSS equation in the operator form as
\begin{equation}\label{equation-NSS-operator}
\partial_t u = - L u - f (u) ,  \quad \mbox{with} \, \, \,
L u = \varepsilon^2 \Delta^2 u - \kappa \Delta u , \, \,
f (u) = \nabla\cdot\left(\frac{\nabla u}{1+|\nabla u|^2}\right) + \kappa \Delta u .
\end{equation}
Moreover, with the Fourier pseudo-spectral spatial approximation, the corresponding system could be expressed as
\begin{equation}\label{equation-NSS-operator-2}
d_t u = - L_N u - f_N (u) ,  \quad \mbox{with} \, \, \,
L_N u = \varepsilon^2 \Delta_N^2 u - \kappa \Delta_N u , \, \,
f_N (u) = \nabla_N \cdot \left( \frac{\nabla_N u}{1+|\nabla_N u|^2}\right) + \kappa \Delta_N u .
\end{equation}
In fact, by using the integrating factor, an update of the exact solution from time instant $t^n$ to the next time step could be represented as
\begin{equation}
\label{ETD-0}
u (t_{n+1})={\rm e}^{-\dt L_N}u(t_n) - \int_0^{\Delta t} {\rm e}^{-(\dt - \tau) L_N}f_N (u(t_n+\tau) )\, d \tau.
\end{equation}

  We denote $u^n$ as the numerical approximation to the PDE solution at time step $t^n := n \dt$, with any integer $n$. With an application of multi-step Lagrange extrapolation formulas, required by the given accuracy order, we propose a third order ETD-based scheme for the NSS equation~\eqref{equation-NSS}:
\begin{eqnarray}
   u^{n+1} &=& {\rm e}^{- L_N \dt} u^n - A \dt^3 \phi_0 (L_N) \Delta_N^2 (u^{n+1} - u^n) - \dt \phi_0 (L_N) f_N (u^n)  \nonumber
 \\
   &&
   - \dt \phi_1 (L_N) ( \frac32 f_N (u^n) - 2 f_N (u^{n-1}) + \frac12 f_N (u^{n-2}) )  \nonumber
\\
  &&
  - \dt \phi_2 (L_N) ( \frac12 f_N (u^n) - f_N (u^{n-1}) + \frac12 f_N (u^{n-2}) ) ,
  \label{scheme-ETD-3rd-0}
\end{eqnarray}
with
\begin{eqnarray}
  &&
  \phi_0 (L_N) = ( \dt L_N)^{-1} ( I - {\rm e}^{-\dt L_N} ) ,   \nonumber
\\
  &&
  \phi_1 (L_N) = ( \dt L_N)^{-1} ( I - (\dt L_N)^{-1} (I - {\rm e}^{-\dt L_N}) )  , \nonumber
\\
  &&
   \phi_2 (L_N) = ( \dt L_N)^{-1} \left( I - 2 (\dt L_N)^{-1} (I - (\dt L_N)^{-1} (I - {\rm e}^{- \dt L_N}) ) \right)  .
  \label{scheme-ETD-3rd-1}
\end{eqnarray}

\begin{rem}
Without an artificial regularization term, the first, second and third order multi-step ETD-based schemes have been analyzed in~\cite{Ju17}:
\begin{eqnarray}
  &&
  {\bf the \, \, first\, \, order \, \, scheme \, \, (ETD1):}  \, \,  \,
  u^{n+1}={\rm e}^{-\dt L_N} u^n - \dt \phi_0 (L_N) f_N (u^n) ,  \label{scheme-ETD-1st}
\\
  &&
  {\bf the \, \, second \, \, order \, \, scheme \, \, (ETDMs2):}  \nonumber
\\
  &&  \quad
  u^{n+1}={\rm e}^{-\dt L_N} u^n - \dt \phi_0 (L_N) f_N (u^n) - \dt \phi_1 (L_N) ( f_N(u^n) - f_N (u^{n-1})) , \label{scheme-ETD-2nd}
\\
  &&
  {\bf the \, \, third \, \, order \, \, scheme \, \, (ETDMs3):}  \nonumber
\\
  &&  \quad
  u^{n+1}={\rm e}^{-\dt L_N} u^n - \dt \phi_0 (L_N) f_N (u^n)
  - \dt \phi_1 (L_N) ( \frac32 f_N (u^n) - 2 f_N (u^{n-1}) + \frac12 f_N (u^{n-2}) )  \nonumber
\\
  && \qquad  \qquad  \, \,
  - \dt \phi_2 (L_N) ( \frac12 f_N (u^n) - f_N (u^{n-1}) + \frac12 f_N (u^{n-2}) ) .  \label{scheme-ETD-3rd-Ju17}
\end{eqnarray}
In comparison with~\eqref{scheme-ETD-3rd-Ju17} reported in~\cite{Ju17}, we have added a third order Douglas-Dupont regularization term, namely, $- A \dt^2 \Delta_N^2 (u^{n+1} - u^n)$, in the proposed third order numerical scheme. Such a regularization term enables one to theoretically justify the energy stability, as will be demonstrated in later sections.
\end{rem}

For the proposed scheme~\eqref{scheme-ETD-3rd-0}, the mass-conservative property is always valid: $\overline{u^{n+1}} = \overline{u^n} = \overline{u^0} := \beta_0$, which comes from the following identities:
\begin{eqnarray}
  &&
  \overline{{\rm e}^{-\dt L_N} u^n} = \overline{u^n} ,  \quad \mbox{since} \, \, \,
   {\rm e}^{-\dt \Lambda_{0,0}} \equiv 1 ,
\\
  &&
  \overline{f_N (u^k)} = \overline{ \nabla_N \cdot \left( \frac{\nabla_N u^k}{1+|\nabla_N u^k|^2}\right) } = 0 ,  \quad \forall k ,
\\
  &&
  \overline{\phi_j (L_N) (g)} = 0 ,  \quad \mbox{for any $g$ with $\overline{g} = 0$} , \quad j=0, 1, 2 .
\end{eqnarray}

\subsection{Some preliminary estimates}

To facilitate the stability and convergence analysis for the numerical error function, we introduce the following linear operators:
\begin{eqnarray}
  {\cal G}_N &=& \left( \phi_0 (L_N) \right)^{-1} = \dt L_N ( I - {\rm e}^{-\dt L_N} )^{-1} , \label{operator-1}
\\
  G^{(1)}_N &=& \left( \phi_0 (L_N) \right)^{-1}  \phi_1 (L_N)
  =  ( I - {\rm e}^{-\dt L_N} )^{-1}  ( I - (\dt L_N)^{-1} (I - {\rm e}^{-\dt L_N}) )  ,
  \label{operator-2}
\\
  G^{(2)}_N &=& \left( \phi_0 (L_N) \right)^{-1}  \phi_2 (L_N)   \nonumber
\\
  &=&
   ( I - {\rm e}^{-\dt L_N} )^{-1}  \left( I - 2 (\dt L_N)^{-1} (I - (\dt L_N)^{-1} (I - {\rm e}^{-\dt L_N}) ) \right) . \label{operator-3}
\end{eqnarray}
In more details, for any grid function $f$ with the following discrete Fourier expansion:
\begin{equation}
  f_{i,j} = \sum_{k,\ell=-K}^K \hat{f}_{k,\ell} {\rm e}^{2 \pi i ( k x_i + \ell y_j)/L} ,
  \label{Fourier-1}
\end{equation}
an application of the above operators become
\begin{eqnarray}
  &&
  ( {\cal G}_N f )_{i,j} = \sum_{k,\ell=-K}^K  \frac{\dt \Lambda_{k,\ell}}{1
  - {\rm e}^{- \dt \Lambda_{k,\ell}} }
  \hat{f}_{k,\ell} {\rm e}^{2 \pi i ( k x_i + \ell y_j)/L} ,   \label{Fourier-2-1}
\\
  &&
  ( G^{(1)}_N f )_{i,j} = \sum_{k,\ell=-K}^K  \frac{1 - \frac{1 - {\rm e}^{- \dt \Lambda_{k,\ell}} }{\dt \Lambda_{k,\ell}} }{1 - {\rm e}^{- \dt \Lambda_{k,\ell}} }
  \hat{f}_{k,\ell} {\rm e}^{2 \pi i ( k x_i + \ell y_j)/L} ,   \label{Fourier-2-2}
\\
  &&
  ( G^{(2)}_N f )_{i,j} = \sum_{k,\ell=-K}^K  \frac{1 - 2 \frac{1 - \frac{1 - {\rm e}^{- \dt \Lambda_{k,\ell}} }{\dt \Lambda_{k,\ell}} }{ \dt \Lambda_{k,\ell}} }{1 - {\rm e}^{- \dt \Lambda_{k,\ell}} }
  \hat{f}_{k,\ell} {\rm e}^{2 \pi i ( k x_i + \ell y_j)/L} ,   \label{Fourier-2-3}
\end{eqnarray}
with $\lambda_{k,\ell}$, $\Lambda_{k,\ell}$ given by~\eqref{spectral-coll-4-4}. Meanwhile, since all the eigenvalues in (\ref{Fourier-2-1}), $\frac{\dt \Lambda_{k,\ell}}{1
  - {\rm e}^{- \dt \Lambda_{k,l}} } $, are non-negative, we define ${\cal G}^{(0)}_N = ( {\cal G}_N)^{1/2}$ as
\begin{eqnarray}
  ( {\cal G}^{(0)}_N f )_{i,j} = ( {\cal G}_N )^{1/2} f _{i,j} = \sum_{k,\ell=-K}^K  \left( \frac{\dt \Lambda_{k,\ell}}{1 - {\rm e}^{- \dt \Lambda_{k,\ell}} }   \right)^{\frac12}
  \hat{f}_{k,\ell} {\rm e}^{2 \pi ( k x_i + \ell y_j)/L}  .
    \label{Fourier-3}
\end{eqnarray}
Obviously, the operator ${\cal G}^{(0)}_N$ is commutative with any differential operator in the Fourier collocation spectral space, and the following summation by parts formula is available:
\begin{eqnarray}
   \left\langle f , {\cal G}_N g \right\rangle = \left\langle {\cal G}^{(0)}_N f , {\cal G}^{(0)}_N g \right\rangle .   \label{Fourier-4}
\end{eqnarray}

  In fact, if we denote
\begin{eqnarray}
  g_0 (x) = \frac{1 - {\rm e}^{-x}}{x} ,   \quad
  g_1 (x) = \frac{1 - \frac{1 - {\rm e}^{-x}}{x} }{x} ,  \quad
  g_2 (x) = \frac{1 - 2 \frac{1 - \frac{1 - {\rm e}^{-x}}{x} }{x} }{x}  ,   \label{lem 1-0}
\end{eqnarray}
for $x > 0$, the following result will be used in later analysis.

\begin{lem}  \label{lem:lem 1}
 (1) $g_i (x)$ is decreasing, for $i=0, 1, 2$. 

 (2) $\frac{g_1 (x)}{g_0 (x)} \le \frac{1}{1 - {\rm e}^{-2} }$ and $\frac{g_2 (x)}{g_0 (x)} \le \frac{1}{1 - {\rm e}^{-2} }$, $\forall x > 0$.
\end{lem}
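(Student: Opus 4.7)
The plan is to derive clean integral representations of $g_0,g_1,g_2$ and read off both statements from them. Starting from the identity $g_0(x) = \frac{1-e^{-x}}{x} = \int_0^1 e^{-sx}\,ds$, I would obtain
\[
g_1(x) = \int_0^1 (1-s)\, e^{-sx}\,ds,\qquad g_2(x) = \int_0^1 (1-s)^2\, e^{-sx}\,ds.
\]
The formula for $g_1$ is obtained by writing $1-g_0(x) = \int_0^1 (1-e^{-sx})\,ds$, inserting the elementary identity $1-e^{-sx} = sx\int_0^1 e^{-sxt}\,dt$ into the definition $g_1(x) = (1-g_0(x))/x$, performing the change of variable $u=st$ in the inner integral and swapping the order of integration. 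The $g_2$ formula is produced in the same way: after noting $\int_0^1 2(1-u)\,du = 1$, so that $1-2g_1(x) = \int_0^1 2(1-u)(1-e^{-ux})\,du$, I would apply the identity $1-e^{-ux}= ux\int_0^1 e^{-uxs}\,ds$ once more and swap the order of integration, the inner integral collapsing as $\int_v^1 2(1-u)\,du = (1-v)^2$.

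With these representations in hand, part (1) is immediate: for each fixed $s\in(0,1]$ the function $x\mapsto e^{-sx}$ is strictly decreasing, the weights $(1-s)^i\ge 0$ are independent of $x$, and an integrable dominating argument lets one differentiate under the integral sign. Hence $g_i'(x) = -\int_0^1 s(1-s)^i e^{-sx}\,ds < 0$ for all $x>0$ and $i=0,1,2$.

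For part (2), since $(1-s)^i\le 1$ on $[0,1]$ for $i=1,2$, the pointwise bound $(1-s)^i e^{-sx} \le e^{-sx}$ integrates to $g_i(x) \le g_0(x)$, so
\[
\frac{g_i(x)}{g_0(x)} \;\le\; 1 \;\le\; \frac{1}{1-e^{-2}}, \qquad i=1,2,
\]
the right inequality being trivial from $1-e^{-2}<1$. This is in fact strictly stronger than what is claimed.

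The only step requiring genuine care is the derivation of the $g_2$ integral representation, as it involves a double application of the $1-e^{-\cdot}$ identity together with a non-trivial order-of-integration swap; once this representation is available there is no remaining analytic obstacle, and the two conclusions collapse to one-line arguments. An alternative, purely calculus-based route would be to verify $g_i'(x)\le 0$ by direct differentiation and to bound the ratios using $\frac{g_1(x)}{g_0(x)} = \frac{1-g_0(x)}{1-e^{-x}}$, but this is considerably messier than the integral-representation argument and does not yield any stronger information.
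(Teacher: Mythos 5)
Your proof is correct, and it takes a genuinely different route from the paper's. The paper first establishes an auxiliary calculus fact (if $f,g>0$ and $f/g$ is decreasing, then $x\mapsto \int_0^x f\,/\int_0^x g$ is decreasing), writes $g_0,g_1,g_2$ recursively as such quotients ($g_1(x)=\int_0^x(1-{\rm e}^{-t})\,dt\,/\int_0^x 2t\,dt$, etc.), and deduces monotonicity by three successive applications; for part (2) it then combines the endpoint values $g_1(0)=\tfrac12$, $g_2(0)=\tfrac13$ with a case split at $x=2$ to reach the bound $\frac{1}{1-{\rm e}^{-2}}$. Your representation $g_i(x)=\int_0^1(1-s)^i\,{\rm e}^{-sx}\,ds$ (which I have checked directly; it is the standard formula for the $\varphi$-functions of exponential integrators) collapses both parts to one line each: differentiation under the integral sign gives monotonicity, and $(1-s)^i\le 1$ gives $g_i(x)\le g_0(x)$, i.e. the ratios are bounded by $1$ rather than by $\frac{1}{1-{\rm e}^{-2}}\approx 1.16$. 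This is strictly sharper than what the lemma asserts, and since the constant $\frac{1}{1-{\rm e}^{-2}}$ propagates into $C_4=C_5$ in Proposition~\ref{prop:prop 1} and thence into $\gamma^{(0)}$, $\alpha_0$ and the lower bound \eqref{constants-4} on the regularization parameter $A$, your argument would in fact mildly improve those constants. The trade-off is that the paper's quotient-monotonicity lemma is self-contained elementary calculus and generalizes mechanically to any further $g_k$ defined by the same recursion without needing to guess a closed-form kernel, whereas your approach requires producing (and justifying, via the Fubini swap you describe) the explicit integral representation at each level; both are sound, and yours is the cleaner and quantitatively stronger of the two.
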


As a result, the following estimates could be derived, using a careful Fourier analysis.

\begin{prop}  \label{prop:prop 1}
  For any periodic grid function $f$ with $\overline{f} =0$, we have
\begin{eqnarray}
  &&
  \| f \|_2 \le \nrm{ {\cal G}^{(0)}_N f }_2 \le C_1 ( \| f \|_2 + \dt^{1/2} ( \varepsilon \| \Delta_N f \|_2 + \kappa^{1/2} \| \nabla_N f \|_2 ) ) ,  \label{prop-1-0-1}
\\
  &&
  \dt \langle L_N f , f \rangle  \le \langle {\cal G}_N f , f \rangle ,  \quad
  \left\langle L_N  f , - \Delta_N {\rm e}^{- L_N \dt} f  \right\rangle \ge 0  ,
  \label{prop-1-0-1-2}
\\
  &&
   \| f \|_{-1} \le \nrm{ {\cal G}^{(0)}_N f }_{-1} \le C_2 ( \| f \|_{-1} + \dt^{1/2} ( \varepsilon \| \nabla_N f \|_2 + \kappa^{1/2} \| f \|_2 ) )  ,  \label{prop-1-0-2}
\\
  &&
   \| \nabla_N f \|_2 \le \nrm{ \nabla_N {\cal G}^{(0)}_N f }_2 \le C_3 ( \| \nabla_N f \|_2 + \dt^{1/2} \varepsilon \| \nabla_N \Delta_N f \|_2 )  ,  \label{prop-1-0-3}
\\
  &&
  \| G^{(1)}_N f \|_2  \le C_4 \| f \|_2 ,   \quad  \| G^{(2)}_N f \|_2  \le C_5 \| f \|_2 , \quad
  C_4 = C_5 =  \frac{1}{1 - {\rm e}^{-2}} ,
   \label{prop-1-0-4}
\end{eqnarray}
in which the constants $C_i$, $1 \le i \le 5$, are only dependent on $\Omega$, independent on $f$ and $N$.
\end{prop}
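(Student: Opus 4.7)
The plan is to exploit the fact that every operator in the statement --- $\mathcal{G}_N$, $\mathcal{G}_N^{(0)}$, $G_N^{(1)}$, $G_N^{(2)}$, together with $L_N$, $\nabla_N$, $\Delta_N$, and $\mathrm{e}^{-\dt L_N}$ --- acts as a Fourier multiplier on the discrete basis $\{\exp(2\pi\mathrm{i}(kx_i + \ell y_j)/L)\}_{-K \le k,\ell \le K}$. By Parseval's identity, each claimed inequality reduces to a pointwise (symbol-level) bound in the nonnegative variable $\Lambda := \dt\,\Lambda_{k,\ell}$, followed by summation against $|\hat f_{k,\ell}|^2$ possibly weighted by powers of $\lambda_{k,\ell}$. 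I will also use the commutativity and self-adjointness (all symbols real) of these multipliers to rearrange inner products freely and to define ${\cal G}_N^{(0)}$ via a nonnegative square root.

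The workhorse scalar bound is $1 \le \Lambda/(1 - \mathrm{e}^{-\Lambda}) \le C(1 + \Lambda)$ for $\Lambda \ge 0$. The lower bound is just $1 - \mathrm{e}^{-\Lambda} \le \Lambda$. For the upper bound I would split cases: on $0 < \Lambda \le 1$, Taylor expansion gives $1 - \mathrm{e}^{-\Lambda} \ge \Lambda - \Lambda^2/2 \ge \Lambda/2$, so the ratio is at most $2$; on $\Lambda > 1$, $1 - \mathrm{e}^{-\Lambda} \ge 1 - \mathrm{e}^{-1}$, so the ratio is at most $\Lambda/(1 - \mathrm{e}^{-1})$. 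Combined, these give the linear-in-$\Lambda$ bound with an absolute constant $C$. With this in hand, (\ref{prop-1-0-1}) follows from the Parseval identity $\|\mathcal{G}_N^{(0)} f\|_2^2 = \sum_{k,\ell} \frac{\dt\,\Lambda_{k,\ell}}{1 - \mathrm{e}^{-\dt\,\Lambda_{k,\ell}}} |\hat f_{k,\ell}|^2$, bounding below by $\|f\|_2^2$ and above by $C(\|f\|_2^2 + \dt(\varepsilon^2 \|\Delta_N f\|_2^2 + \kappa \|\nabla_N f\|_2^2))$ through the decomposition $\Lambda_{k,\ell} = \varepsilon^2 \lambda_{k,\ell}^2 + \kappa \lambda_{k,\ell}$, and then extracting a square root. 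Estimates (\ref{prop-1-0-2}) and (\ref{prop-1-0-3}) are structurally identical: insert the weight $\lambda_{k,\ell}^{-1}$ (using $\bar f = 0$ to drop the zero mode) or $\lambda_{k,\ell}$ into the Parseval sum, respectively; for (\ref{prop-1-0-3}) the leftover $\kappa \|\Delta_N f\|_2^2$ is absorbed into $\varepsilon^2 \|\nabla_N \Delta_N f\|_2^2$ via the uniform lower bound $\lambda_{k,\ell} \ge (2\pi/L)^2$, at the cost of an $\Omega$-dependent constant. Estimate (\ref{prop-1-0-1-2}) then follows because the first inequality reduces pointwise to $\Lambda \le \Lambda/(1 - \mathrm{e}^{-\Lambda})$, and the second to the nonnegativity of the composite symbol $\Lambda_{k,\ell} \lambda_{k,\ell} \mathrm{e}^{-\dt\,\Lambda_{k,\ell}}$ (after grouping the three commuting multipliers under Parseval).

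Finally, (\ref{prop-1-0-4}) is immediate from Lemma~\ref{lem:lem 1}(2): the symbols of $G_N^{(j)}$ are $g_j(\dt\,\Lambda_{k,\ell})/g_0(\dt\,\Lambda_{k,\ell})$ for $j=1,2$, each uniformly bounded by $1/(1-\mathrm{e}^{-2})$, which transfers through Parseval directly to a bound on the operator $\ell^2$-norm with the advertised constant. The main obstacle in the whole argument is really just securing the symbol bound $\Lambda/(1 - \mathrm{e}^{-\Lambda}) \le C(1+\Lambda)$ with a constant uniform across both the regime $\Lambda \downarrow 0$ (where the ratio tends to $1$) and $\Lambda \to \infty$ (linear growth); once that case split is in hand, the remainder of the proposition is routine Parseval bookkeeping combined with the explicit form of $\Lambda_{k,\ell}$.
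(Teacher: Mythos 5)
Your proposal is correct and follows essentially the same route as the paper's Appendix~B proof: identify every operator as a Fourier multiplier, reduce each inequality to a pointwise symbol bound (the key one being $1 \le x/(1-{\rm e}^{-x}) \le 1+x$, which the paper gets with constant $1$ from Lemma~\ref{lem:lem 1} plus $(1+x){\rm e}^{-x}\le 1$, while your case split yields a non-sharp absolute constant --- immaterial here), and then sum via Parseval with the decomposition $\Lambda_{k,\ell}=\varepsilon^2\lambda_{k,\ell}^2+\kappa\lambda_{k,\ell}$, finishing \eqref{prop-1-0-4} by the symbol bounds $g_j/g_0\le 1/(1-{\rm e}^{-2})$ exactly as the paper does.
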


The detailed proof for both Lemma~\ref{lem:lem 1} and Proposition~\ref{prop:prop 1} will be provided in Appendices~\ref{proof:Lemma 1} and \ref{proof:Prop 1}, respectively. In addition, we denote
\begin{equation}
  g_N (u) =  \frac{\nabla_N u}{1+|\nabla_N u|^2} + \kappa \nabla_N u ,  \label{defi-g}
\end{equation}
so that $f_N (u) = \nabla_N \cdot g_N (u)$.

\subsection{The energy stability analysis}

We choose $\kappa_0 = \frac18$ and $\kappa \ge \frac14$ so that
\begin{equation}
  \kappa^* = \kappa - \kappa_0 \ge \frac12 \kappa . \label{relation-kappa}
\end{equation}
And also, we denote the following constants:
\begin{eqnarray}
  &&
   \gamma_1^{(0)} = \frac32 C_4 (1 + \kappa) , \, \, \,  \gamma_2^{(0)} = C_4 (1 + \kappa)  , \, \, \,  \gamma_3^{(0)} = \frac12 C_4 (1 + \kappa) ,  \label{constants-1}
\\
  &&
   \gamma^{(0)} = \gamma_1^{(0)} +  \gamma_2^{(0)} + \gamma_3^{(0)} =  3 C_4 (1 + \kappa)  ,  \quad \mbox{with $C_4 =  \frac{1}{1 - {\rm e}^{-2}}$} .   \label{constants-2}
\end{eqnarray}
In addition, the constant $\alpha_0 > 0$ is introduced to be the unique solution of the following equation:
\begin{eqnarray}
  \frac{ {\rm e}^{- \alpha_0} }{ 1 - {\rm e}^{- \alpha_0} }  + \frac12 = \frac{\gamma^{(0)} }{\kappa} ,  \quad \mbox{i.e.} \, \, \, \alpha_0 = \ln \Bigl( \frac{\gamma^{(0)} + \frac12 \kappa}{\gamma^{(0)} - \frac12 \kappa} \Bigr) .
  \label{constants-3}
\end{eqnarray}

The following two preliminary estimates in~\cite{Ju17} will be useful in the energy stability analysis.

\begin{lem} \cite{Ju17} \label{lem:vector inequality}
Denote a mapping ${\bf \beta}: R^2 \to R^2$: ${\bf \beta} (\v) = \frac{\v}{1 + | \v|^2 }$. Then we have
\begin{eqnarray}
  | {\bf \beta} (\v) - {\bf \beta} (\w) | \le | \v - \w | ,  \quad \forall \v, \, \w \, \in R^2 .
  \label{vector inequality-0}
\end{eqnarray}
\end{lem}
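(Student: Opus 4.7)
The statement is a $1$-Lipschitz property for the smooth field $\beta(\v) = \v/(1+|\v|^2)$ on $\mathbb{R}^2$. My plan is to prove it by estimating the Jacobian of $\beta$ uniformly and then applying the fundamental theorem of calculus along the segment joining $\w$ to $\v$.

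First I would compute directly
$$D\beta(\xi) \;=\; \frac{1}{1+|\xi|^2}\,I \;-\; \frac{2}{(1+|\xi|^2)^2}\,\xi\,\xi^T,$$
which is a symmetric $2\times 2$ matrix. Reading off eigenpairs from its rank-one structure, $\xi$ itself is an eigenvector with eigenvalue $\mu_1(\xi) = (1-|\xi|^2)/(1+|\xi|^2)^2$, and every vector orthogonal to $\xi$ is an eigenvector with eigenvalue $\mu_2(\xi) = 1/(1+|\xi|^2)$. I would then verify $|\mu_i(\xi)| \le 1$ uniformly: the bound $0 < \mu_2 \le 1$ is immediate, and for $\mu_1$, writing $t = |\xi|^2 \ge 0$, the regime $t \in [0,1]$ gives $\mu_1 \in [0,1]$ while the regime $t > 1$ gives $|\mu_1| = (t-1)/(1+t)^2 \le 1/8$, the maximum being attained at $t=3$ by elementary calculus. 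This yields $\|D\beta(\xi)\|_{\mathrm{op}} \le 1$ for every $\xi \in \mathbb{R}^2$.

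With this operator-norm bound in hand, I would close the argument via the identity
$$\beta(\v) - \beta(\w) \;=\; \int_0^1 D\beta\bigl(\w + s(\v-\w)\bigr)(\v-\w)\,ds,$$
whose Euclidean norm is bounded by $\bigl(\int_0^1 \|D\beta(\w+s(\v-\w))\|_{\mathrm{op}}\,ds\bigr)\,|\v-\w| \le |\v-\w|$, which is exactly the desired estimate.

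The only nontrivial step is the case analysis for $\mu_1$ when $|\xi|>1$, where this eigenvalue turns negative; an elementary one-variable calculation confirms that the negative lobe stays bounded below by $-1/8$, comfortably inside $[-1,1]$. An alternative purely algebraic route would clear denominators to write $\beta(\v)-\beta(\w) = \bigl[(\v-\w) + \v|\w|^2 - \w|\v|^2\bigr]/\bigl((1+|\v|^2)(1+|\w|^2)\bigr)$ and then factor $\v-\w$ out of $\v|\w|^2 - \w|\v|^2$ before invoking Cauchy--Schwarz, but the Jacobian route is shorter and more transparent.
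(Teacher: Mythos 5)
Your proof is correct: the Jacobian computation, the eigenvalue identification ($\mu_1=(1-|\xi|^2)/(1+|\xi|^2)^2$ along $\xi$ and $\mu_2=1/(1+|\xi|^2)$ on $\xi^{\perp}$), the uniform bound $\|D\beta(\xi)\|_{\mathrm{op}}\le 1$ (with the negative lobe of $\mu_1$ bottoming out at $-1/8$ when $|\xi|^2=3$), and the mean-value integral along the segment are all sound, and the degenerate point $\xi=0$ causes no trouble since $D\beta(0)=I$. Note, however, that the paper itself offers no proof of this lemma: it is quoted verbatim from the reference [Ju17] ("Energy stability and convergence of exponential time differencing schemes\dots"), so there is no in-paper argument to compare against. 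The proof in that reference is essentially the algebraic route you sketch at the end (clearing denominators and factoring $\v-\w$ out of the numerator before applying Cauchy--Schwarz); your Jacobian/operator-norm argument is a clean, self-contained alternative that in addition exposes the sharper constant $\max\{|\mu_1|,|\mu_2|\}$ pointwise, though only the global Lipschitz constant $1$ is used downstream in the energy stability analysis (inequality \eqref{scheme-ETD-3rd-stability-4-3}).
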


\begin{lem} \cite{Ju17} \label{lem:convexity}
Define $H (a,b) = \frac12 \ln (1 + a^2 + b^2) + \frac{\kappa_0}{2} (a^2 + b^2)$. Then $H (a,b)$ is convex in $R^2$ if and only if $\kappa_0 \ge \frac18$.
\end{lem}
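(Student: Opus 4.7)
My plan is to compute the Hessian of $H$ in closed form, reduce positive semi-definiteness at a generic point $(a,b) \in \mathbb{R}^2$ to a one-variable polynomial condition in $r^2 := a^2+b^2$, and then pinpoint the sharp threshold $\kappa_0 = 1/8$ via an explicit factorization.

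First I would compute the entries of the Hessian. Writing $D := (1+a^2+b^2)^2$, direct differentiation gives
\begin{equation*}
\partial_{aa} H = \kappa_0 + \frac{1+b^2-a^2}{D}, \qquad \partial_{bb} H = \kappa_0 + \frac{1+a^2-b^2}{D}, \qquad \partial_{ab} H = -\frac{2ab}{D}.
\end{equation*}
The trace $2\kappa_0 + 2/D$ is automatically nonneg for $\kappa_0 \ge 0$, so convexity reduces to non-negativity of the determinant. Using the two algebraic identities $(1+b^2-a^2)(1+a^2-b^2) = 1 - (a^2-b^2)^2$ and $(a^2-b^2)^2 + 4a^2b^2 = r^4$, the mixed term $-4a^2b^2/D^2$ combines with the diagonal product to yield a purely radial expression:
\begin{equation*}
\det(\mathrm{Hess}\, H) = \kappa_0^2 + \frac{2\kappa_0}{(1+r^2)^2} + \frac{1-r^2}{(1+r^2)^3}.
\end{equation*}

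Next I would substitute $s := 1/(1+r^2) \in (0,1]$ to convert the determinant into the cubic $f(s) := 2s^3 + (2\kappa_0 - 1)s^2 + \kappa_0^2$. Since $f'(s) = 2s\bigl(3s + 2\kappa_0 - 1\bigr)$, the only interior critical point on $(0,1]$ is $s^* := (1-2\kappa_0)/3$, which lies in $(0,1)$ only when $\kappa_0 < 1/2$; in the complementary range $\kappa_0 \ge 1/2$, $f$ is strictly increasing on $(0,1]$ and bounded below by $\kappa_0^2 > 0$, so convexity holds trivially. Using $2\kappa_0 - 1 = -3 s^*$ at the interior critical point, the minimum collapses to
\begin{equation*}
f(s^*) = \kappa_0^2 - \frac{(1-2\kappa_0)^3}{27},
\end{equation*}
so for $\kappa_0 \in (0, 1/2)$ the convexity of $H$ over $\mathbb{R}^2$ is equivalent to $27\kappa_0^2 \ge (1-2\kappa_0)^3$.

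Finally, I would verify by direct expansion the key factorization $27\kappa_0^2 - (1-2\kappa_0)^3 = (8\kappa_0 - 1)(\kappa_0 + 1)^2$. Since $(\kappa_0+1)^2 > 0$ for every $\kappa_0 > 0$, this inequality is equivalent to $\kappa_0 \ge 1/8$, which combined with the trivial regime $\kappa_0 \ge 1/2$ yields the ``if'' direction. For the ``only if'' direction, when $0 < \kappa_0 < 1/8$ one picks any $(a,b)$ with $a^2 + b^2 = \frac{2(1+\kappa_0)}{1-2\kappa_0}$ (so that $s = s^*$); there $\det(\mathrm{Hess}\, H) = f(s^*) < 0$, and $H$ fails to be convex. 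The main obstacle is the algebraic bookkeeping in the Hessian-determinant reduction: the cancellation into a radial function and the appearance of the exact factor $(8\kappa_0 - 1)$ are exactly what make $1/8$ the sharp threshold rather than some larger, suboptimal value.
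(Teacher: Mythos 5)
Your argument is correct, and it is worth noting that the paper itself offers no proof of this lemma --- it is imported verbatim from \cite{Ju17} --- so there is no in-paper argument to compare against. I checked your computations: the Hessian entries are right, the identities $(1+b^2-a^2)(1+a^2-b^2)=1-(a^2-b^2)^2$ and $(a^2-b^2)^2+4a^2b^2=r^4$ do collapse the determinant to the radial expression $\kappa_0^2+\tfrac{2\kappa_0}{(1+r^2)^2}+\tfrac{1-r^2}{(1+r^2)^3}$, the substitution $s=1/(1+r^2)$ gives $f(s)=2s^3+(2\kappa_0-1)s^2+\kappa_0^2$, and the factorization $27\kappa_0^2-(1-2\kappa_0)^3=(8\kappa_0-1)(\kappa_0+1)^2$ expands correctly, so the threshold $\kappa_0=\tfrac18$ is exactly where the interior minimum $f(s^*)$ changes sign. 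Two small remarks. First, your ``only if'' direction treats only $0<\kappa_0<\tfrac18$; for $\kappa_0\le 0$ convexity also fails (e.g.\ the trace $2\kappa_0+2/(1+r^2)^2$ is negative for large $r$ when $\kappa_0<0$, and for $\kappa_0=0$ the determinant is negative for $r>1$), which you should add for a literally complete ``iff''. Second, there is a noticeably shorter route that exploits the radial symmetry you ultimately recover by hand: writing $H=g(r)$ with $g(r)=\tfrac12\ln(1+r^2)+\tfrac{\kappa_0}{2}r^2$, the Hessian eigenvalues are $g''(r)$ and $g'(r)/r$; the latter is $\kappa_0+\tfrac{1}{1+r^2}>0$, and $g''(r)\ge 0$ for all $r$ reduces to $\kappa_0\ge\max_{t\ge 0}\tfrac{t-1}{(1+t)^2}=\tfrac18$ (attained at $t=r^2=3$, consistent with your $s^*=\tfrac14$ when $\kappa_0=\tfrac18$). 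Your determinant computation buys nothing extra here, but it is self-contained and verifiably correct.
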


As a result of these convexity result, we are able to obtain the following energy estimate.

\begin{lem} \label{lem: energy stab-0}
For the numerical solutions $u^{n+1}$ and $u^n$ with $\overline{u^{n+1}} = \overline{u^n}$, we have
\begin{eqnarray}
   E_N (u^{n+1}) - E_N (u^n) &\le& \langle L_N u^{n+1} + f_N (u^n), u^{n+1} - u^n \rangle  \nonumber
 \\
   &&
   - \frac{\varepsilon^2}{2} \| \Delta_N (u^{n+1} - u^n) \|_2^2
   - \kappa^* \| \nabla_N (u^{n+1} - u^n ) \|_2^2 . \label{energy stab-0}
\end{eqnarray}
\end{lem}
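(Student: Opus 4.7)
The plan is a standard convex-splitting energy estimate applied component-by-component to $E_N$, followed by an algebraic rearrangement that matches the mixed implicit/explicit operator $L_N u^{n+1} + f_N(u^n)$ on the right-hand side.

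First I would split
\[
E_N(u^{n+1}) - E_N(u^n) = \frac{\varepsilon^2}{2}\left(\|\Delta_N u^{n+1}\|_2^2 - \|\Delta_N u^n\|_2^2\right) + \left(E_{c,1,N}(u^{n+1}) - E_{c,1,N}(u^n)\right),
\]
and, using the polarization identity $\|a\|_2^2 - \|b\|_2^2 = 2\langle a, a-b\rangle - \|a-b\|_2^2$ together with the summation-by-parts relation \qref{spectral-coll-inner product-3}, rewrite the biharmonic difference as $\varepsilon^2\langle \Delta_N^2 u^{n+1}, u^{n+1}-u^n\rangle - \frac{\varepsilon^2}{2}\|\Delta_N(u^{n+1}-u^n)\|_2^2$. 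This already delivers the implicit $\varepsilon^2\Delta_N^2$ contribution and the negative $\|\Delta_N(\cdot)\|_2^2$ dissipation that must appear in the target inequality.

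Next, for the logarithmic part, I would invoke the identity $-\frac{1}{2}\ln(1+|\mathbf{v}|^2) = -H(\mathbf{v}) + \frac{\kappa_0}{2}|\mathbf{v}|^2$, where $H$ is the convex function of Lemma~\ref{lem:convexity} with $\kappa_0=\tfrac{1}{8}$. Applying the tangent-line inequality $H(\mathbf{w}) - H(\mathbf{v}) \ge \nabla H(\mathbf{v}) \cdot (\mathbf{w}-\mathbf{v})$ pointwise at every grid node with base point $\mathbf{v} = (\nabla_N u^n)_{i,j}$, pairing it with the exact polarization identity for the quadratic piece $\tfrac{\kappa_0}{2}|\nabla_N u|^2$, and performing one summation by parts, I obtain
\[
E_{c,1,N}(u^{n+1}) - E_{c,1,N}(u^n) \le \left\langle \nabla_N \cdot \tfrac{\nabla_N u^n}{1+|\nabla_N u^n|^2},\, u^{n+1}-u^n\right\rangle + \frac{\kappa_0}{2}\|\nabla_N(u^{n+1}-u^n)\|_2^2,
\]
after the two $\kappa_0\langle\nabla_N u^n,\nabla_N(u^{n+1}-u^n)\rangle$ contributions of opposite sign have cancelled; only the positive quadratic residual survives.

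Finally I would assemble the two estimates and regroup. Using the definitions in~\qref{equation-NSS-operator-2}, the implicit $\varepsilon^2\Delta_N^2 u^{n+1}$ is rewritten as $L_N u^{n+1} + \kappa\Delta_N u^{n+1}$, and the log-divergence at $u^n$ as $f_N(u^n) - \kappa\Delta_N u^n$. The extra $\kappa\Delta_N(u^{n+1}-u^n)$ cross term then passes through a further summation by parts to produce $-\kappa\|\nabla_N(u^{n+1}-u^n)\|_2^2$; combining this with the positive residual $\tfrac{\kappa_0}{2}\|\nabla_N(u^{n+1}-u^n)\|_2^2$ leaves a net coefficient $-(\kappa - \tfrac{\kappa_0}{2})$, which is bounded above by $-\kappa^* = -(\kappa-\kappa_0)$, finishing the proof. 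I do not anticipate any real obstacle: the convexity bound from Lemma~\ref{lem:convexity} is the only nontrivial ingredient, and the one detail to watch is that the sign of the convex-splitting residual and the sign of the dissipation gained from the implicit $-\kappa\Delta_N$ splitting are compatible, which is exactly what the hypothesis $\kappa \ge \tfrac{1}{4} > \kappa_0$ from~\qref{relation-kappa} guarantees.
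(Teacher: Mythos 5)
Your proposal is correct and follows essentially the same route as the paper's proof: both arguments rest on the convexity of $H$ from Lemma~\ref{lem:convexity} (applied as a tangent-line/concavity inequality for the logarithmic part), polarization identities for the quadratic pieces, and the final observation that the surviving gradient coefficient $\kappa - \tfrac{\kappa_0}{2} = \tfrac{\kappa+\kappa^*}{2}$ dominates $\kappa^*$. The only difference is organizational --- you bound $E_N(u^{n+1})-E_N(u^n)$ from above directly, whereas the paper bounds the inner products $\langle f_N(u^n),\cdot\rangle$ and $\langle L_N u^{n+1},\cdot\rangle$ from below --- which is the same computation read in the opposite direction.
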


\begin{proof}
For simplicity of presentation, we denote
\begin{eqnarray}
  f_N^{(0)} (u) = \nabla_N \cdot \left( \frac{\nabla_N u}{1+|\nabla_N u|^2}\right) + \kappa _0 \Delta_N u ,  \quad \mbox{so that} \, \, \,
  f_N (u) = f_N^{(0)} (u) + \kappa^* \Delta_N u .  \label{energy stab-1}
\end{eqnarray}
By Lemma~\ref{lem:convexity}, $f_N^{(0)} (u)$ corresponds to a concave energy functional, so that the following convexity inequality is valid:
\begin{eqnarray}
  \langle f_N^{(0)} (u^n) ,  u^{n+1} - u^n \rangle \ge H_{N} (u^{n+1}) - H_N (u^n) , \quad \mbox{with} \, \, \, H_N (\phi) =  E_{c,1,N} (\phi) - \frac{\kappa_0}{2} \| \nabla_N \phi \|_2^2 , \label{energy stab-2}
\end{eqnarray}
with the nonlinear energy functional $E_{c,1,N} (\phi)$ defined in~\eqref{energy-discrete-spectral}. As a consequence, we get
\begin{eqnarray}
  &&
 \langle f_N (u^n) ,  u^{n+1} - u^n \rangle
 = \langle f_N^{(0)} (u^n) ,  u^{n+1} - u^n \rangle
 + \kappa^* \langle \Delta_N u^n ,  u^{n+1} - u^n \rangle \nonumber
\\
  &\ge&
  E_{c,1,N} (u^{n+1}) - E_{c,1,N} (u^n)  - \frac{\kappa_0}{2} ( \| \nabla_N u^{n+1} \|_2^2 - \| \nabla_N u^n \|_2^2 )   \nonumber
\\
  &&
    - \frac{\kappa^*}{2} ( \| \nabla_N u^{n+1} \|_2^2 - \| \nabla_N u^n \|_2^2 ) + \frac{\kappa^*}{2} \| \nabla_N (u^{n+1} - u^n ) \|_2^2 .  \label{energy stab-3}
\end{eqnarray}
Meanwhile, for the linear diffusion term $L_N$, the following equalities are available
\begin{eqnarray}
  &&
  \varepsilon^2 \langle \Delta_N^2 u^{n+1} ,  u^{n+1} - u^n \rangle
  =  \frac{\varepsilon^2}{2} ( \| \Delta_N u^{n+1} \|_2^2 - \| \Delta_N u^n \|_2^2 ) + \frac{\varepsilon^2}{2} \| \Delta_N (u^{n+1} - u^n ) \|_2^2 ,  \label{energy stab-4-1}
\\
  &&
  - \kappa \langle \Delta_N u^{n+1} ,  u^{n+1} - u^n \rangle
  =  \frac{\kappa}{2} ( \| \nabla_N u^{n+1} \|_2^2 - \| \nabla_N u^n \|_2^2 ) + \frac{\kappa}{2} \| \nabla_N (u^{n+1} - u^n ) \|_2^2  ,  \label{energy stab-4-2}
\end{eqnarray}
so that
\begin{eqnarray}
  \langle L_N u^{n+1} ,  u^{n+1} - u^n \rangle
  &=&  \frac{\varepsilon^2}{2} ( \| \Delta_N u^{n+1} \|_2^2 - \| \Delta_N u^n \|_2^2 )
  + \frac{\kappa}{2} ( \| \nabla_N u^{n+1} \|_2^2 - \| \nabla_N u^n \|_2^2 )  \nonumber
\\
  &&
  + \frac{\varepsilon^2}{2} \| \Delta_N (u^{n+1} - u^n ) \|_2^2
  + \frac{\kappa}{2} \| \nabla_N (u^{n+1} - u^n ) \|_2^2  .  \label{energy stab-4-3}
\end{eqnarray}
Finally, a combination of~\eqref{energy stab-3} and \eqref{energy stab-4-3} leads to~\eqref{energy stab-0}, due to the fact that $\kappa = \kappa_0 + \kappa^*$. This completes the proof of Lemma~\ref{lem: energy stab-0}.
\end{proof}

Moreover, a careful Fourier eigenvalue analysis enables one to derive the following estimate, which will be used in the energy stability analysis.

\begin{prop}  \label{prop: eigen est}
For any grid function $f$ with $\overline{f}=0$, the following inequality is valid:
\begin{eqnarray}
  ( \frac{\varepsilon^2}{2} + A \dt^2 ) \| \Delta_N f \|_2^2
   + \kappa^* \| \nabla_N f \|_2^2  +  \langle ( \frac{1}{\dt} {\cal G}_N - L_N ) f , f \rangle
  \ge  \gamma^{(0)} \| \nabla_N f \|_2^2 ,  \label{eigen est-0}
\end{eqnarray}
with $\gamma^{(0)}$ defined in~\eqref{constants-2}, provided that the coefficient $A$ satisfies
\begin{eqnarray}
  A \ge ( \gamma^{(0)} - \frac{\kappa}{4} )^4 \alpha_0^{-2}  \varepsilon^{-2} ,  \quad
  \mbox{with $\gamma^{(0)}$, $\alpha_0$ given by~\eqref{constants-2}, \eqref{constants-3} } .   \label{constants-4}
\end{eqnarray}
\end{prop}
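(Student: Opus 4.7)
The plan is a per-frequency Fourier eigenvalue analysis. Every operator in~(\ref{eigen est-0})---namely $-\Delta_N$, $\Delta_N^2$, $L_N$ and ${\cal G}_N$---is simultaneously diagonal in the discrete Fourier basis (cf.~(\ref{Fourier-2-1})), so expanding the zero-mean $f$ as $f_{i,j}=\sum_{k,\ell}\hat{f}_{k,\ell}\mathrm{e}^{2\pi\mathrm{i}(kx_i+\ell y_j)}$ (with $\hat f_{0,0}=0$) and invoking Parseval reduces the claim to proving, for each $(k,\ell)\ne(0,0)$, the scalar inequality
\begin{equation*}
 \Bigl(\tfrac{\varepsilon^2}{2}+A\dt^2\Bigr)\lambda^2 \;+\; \kappa^*\lambda \;+\; \Lambda\,\frac{\mathrm{e}^{-\dt\Lambda}}{1-\mathrm{e}^{-\dt\Lambda}} \;\ge\; \gamma^{(0)}\lambda,
\end{equation*}
where $\lambda=\lambda_{k,\ell}$, $\Lambda=\Lambda_{k,\ell}=\varepsilon^2\lambda^2+\kappa\lambda$, and the $\langle(\tfrac{1}{\dt}{\cal G}_N-L_N)f,f\rangle$ contribution has been rewritten using the identity $\tfrac{\Lambda}{1-\mathrm{e}^{-\dt\Lambda}}-\Lambda = \Lambda\,\tfrac{\mathrm{e}^{-\dt\Lambda}}{1-\mathrm{e}^{-\dt\Lambda}}$.

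I split on the size of $x:=\dt\Lambda$ relative to the threshold $\alpha_0$. In the regime $x\le\alpha_0$, I exploit the strict monotonicity of $\xi\mapsto\mathrm{e}^{-\xi}/(1-\mathrm{e}^{-\xi})$ on $(0,\infty)$: this yields $\tfrac{\mathrm{e}^{-x}}{1-\mathrm{e}^{-x}}\ge\tfrac{\mathrm{e}^{-\alpha_0}}{1-\mathrm{e}^{-\alpha_0}}=\tfrac{\gamma^{(0)}}{\kappa}-\tfrac12$ by the defining relation~(\ref{constants-3}) of $\alpha_0$. Therefore $\Lambda\,\tfrac{\mathrm{e}^{-x}}{1-\mathrm{e}^{-x}}\ge(\varepsilon^2\lambda^2+\kappa\lambda)(\tfrac{\gamma^{(0)}}{\kappa}-\tfrac12)\ge(\gamma^{(0)}-\tfrac\kappa2)\lambda$, and combined with $\kappa^*\lambda\ge\tfrac\kappa2\lambda$ (from~(\ref{relation-kappa})) I recover the target $\gamma^{(0)}\lambda$---with no constraint on $A$ arising in this regime.

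In the complementary regime $x>\alpha_0$, I discard the (nonnegative but small) residual $\Lambda\,\tfrac{\mathrm{e}^{-x}}{1-\mathrm{e}^{-x}}$ and lean on the regularization: $\dt\ge\alpha_0/\Lambda$ gives $A\dt^2\lambda^2\ge A\alpha_0^2/(\varepsilon^2\lambda+\kappa)^2$. It thus suffices to verify, for all $\lambda>0$, $\tfrac{\varepsilon^2}{2}\lambda^2+\tfrac{A\alpha_0^2}{(\varepsilon^2\lambda+\kappa)^2}\ge(\gamma^{(0)}-\kappa^*)\lambda$. If $\gamma^{(0)}\le\kappa^*$ this is trivial; otherwise, writing $M:=\gamma^{(0)}-\kappa^*>0$ and $y:=\varepsilon^2\lambda$, the case $y\ge 2M$ follows from $\tfrac{y\lambda}{2}\ge M\lambda$, so only $y\in[0,2M]$ is at issue. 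Clearing denominators, the sufficient condition reduces to $A\alpha_0^2\varepsilon^2\ge \sup_{y\in[0,2M]}(M-\tfrac{y}{2})\,y\,(y+\kappa)^2$, a polynomial maximization on a bounded interval.

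The main obstacle, and where most of the calculational effort goes, is this final polynomial maximization: producing the clean closed-form threshold $A\ge(\gamma^{(0)}-\kappa/4)^4\alpha_0^{-2}\varepsilon^{-2}$ of~(\ref{constants-4}). A critical-point/AM--GM analysis of $(M-y/2)y(y+\kappa)^2$ on $[0,2M]$, together with the structural bounds $\kappa^*\ge\kappa/2$ (so $M\le\gamma^{(0)}-\kappa/2$) and a regrouping that shifts half of the available $\kappa$-slack from $\kappa^*$ into the quartic maximum, is where the specific exponent $4$ and the combination $\gamma^{(0)}-\kappa/4$ emerge. Apart from that bookkeeping, the argument is simply Fourier diagonalization powered by the monotonicity of the exponential factor already recorded in Lemma~\ref{lem:lem 1}.
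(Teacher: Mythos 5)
Your Fourier diagonalization, the Parseval reduction to a per-mode scalar inequality, and the treatment of the regime $\dt\Lambda_{k,\ell}\le\alpha_0$ all coincide with the paper's proof (the paper groups $\frac{\varepsilon^2}{2}\lambda^2+\kappa^*\lambda\ge\frac12\Lambda$ and then uses $\Lambda\ge\kappa\lambda$, which is the same computation as yours). The gap is in the regime $\dt\Lambda_{k,\ell}>\alpha_0$. There you reduce the claim to the quartic maximization $A\alpha_0^2\varepsilon^2\ge\sup_{y\in[0,2M]}(M-\frac{y}{2})\,y\,(y+\kappa)^2$ with $M=\gamma^{(0)}-\kappa^*$, and then assert that a ``critical-point/AM--GM analysis'' yields the threshold $(\gamma^{(0)}-\frac{\kappa}{4})^4$. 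That assertion is the entire quantitative content of the proposition and it is not carried out; nor is it routine. The interior critical point of your quartic solves $-2y^2+(3M-\kappa)y+M\kappa=0$, and substituting it back does not collapse to $(\gamma^{(0)}-\frac{\kappa}{4})^4$; the obvious AM--GM groupings fail outright (e.g.\ $(M-\frac{y}{2})\frac{y}{2}\le\frac{M^2}{4}$ followed by $(y+\kappa)^2\le(2M+\kappa)^2$ gives roughly $2(\gamma^{(0)})^4$ against a target of roughly $(\gamma^{(0)})^4$ when $\kappa\ll\gamma^{(0)}$). Numerically your supremum does sit below $(\gamma^{(0)}-\frac{\kappa}{4})^4$ with some slack, so the route is likely salvageable, but as written the decisive inequality is unproven.

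The paper avoids the maximization altogether by a different bookkeeping in this regime: starting again from $\frac{\varepsilon^2}{2}\lambda^2+\kappa^*\lambda\ge\frac12\Lambda$, it splits
\begin{equation*}
\Bigl(\frac{\varepsilon^2}{2}+A\dt^2\Bigr)\lambda^2+\kappa^*\lambda \;\ge\; \frac14\Lambda+\frac14\Lambda+A\dt^2\lambda^2 \;\ge\; \frac{\alpha_0}{4\dt}+\Bigl(\frac{\varepsilon^2}{4}+A\dt^2\Bigr)\lambda^2+\frac{\kappa}{4}\lambda ,
\end{equation*}
converting one quarter of $\Lambda$ into $\frac{\alpha_0}{4\dt}$ via $\Lambda>\alpha_0/\dt$ while keeping the other quarter in the form $\frac{\varepsilon^2}{4}\lambda^2+\frac{\kappa}{4}\lambda$. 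Two successive AM--GM steps, first $\frac{\varepsilon^2}{4}+A\dt^2\ge\sqrt{A}\,\varepsilon\dt\ge(\gamma^{(0)}-\frac{\kappa}{4})^2\alpha_0^{-1}\dt$ (this is where~\eqref{constants-4} enters) and then $\frac{\alpha_0}{4\dt}+(\gamma^{(0)}-\frac{\kappa}{4})^2\alpha_0^{-1}\dt\lambda^2\ge(\gamma^{(0)}-\frac{\kappa}{4})\lambda$, finish the job once the reserved $\frac{\kappa}{4}\lambda$ is added back. This is precisely where the exponent $4$ and the combination $\gamma^{(0)}-\frac{\kappa}{4}$ come from; you should replace your unexecuted maximization with this regrouping, or else actually carry the quartic maximization through to completion, which is considerably more painful.
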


\begin{proof}
For the grid function $f$ with discrete Fourier expansion~\eqref{Fourier-1} and the definition of the operator ${\cal G}$ in~\eqref{Fourier-2-1}, we see that
\begin{eqnarray}
  ( \frac{1}{\dt} {\cal G}_N - L_N ) f_{i,j} &=& \sum_{k,\ell=-K}^K  \Bigl(
   \frac{\Lambda_{k,\ell}}{1 - {\rm e}^{- \dt \Lambda_{k,\ell}} }  - \Lambda_{k,\ell} \Bigr)
  \hat{f}_{k,\ell} {\rm e}^{2 \pi i ( k x_i + \ell y_j)/L} \nonumber
\\
  &=&
  \sum_{k,\ell=-K}^K
   \frac{\Lambda_{k,\ell} {\rm e}^{- \dt \Lambda_{k,\ell}}  }{1 - {\rm e}^{- \dt \Lambda_{k,\ell}} }  \hat{f}_{k,\ell} {\rm e}^{2 \pi i ( k x_i + \ell y_j)/L} ,
     \label{eigen est-1}
\end{eqnarray}
with $\Lambda_{k,\ell}$ and $\lambda_{k,\ell}$ defined in~\eqref{spectral-coll-4-4}. In turn, an application of Parseval equality to~\eqref{eigen est-1} implies that
\begin{eqnarray}
 \langle ( \frac{1}{\dt} {\cal G}_N - L_N ) f , f \rangle
 = L^2 \sum_{k,\ell=-K}^K
   \frac{\Lambda_{k,\ell} {\rm e}^{- \dt \Lambda_{k,\ell}}  }{1 - {\rm e}^{- \dt \Lambda_{k,\ell}} }  | \hat{f}_{k,\ell} |^2 .    \label{eigen est-2-1}
\end{eqnarray}
Similarly, an application of Parseval equality to $\nabla_N f$ and $\Delta_N f$ gives
\begin{eqnarray}
  \| \nabla_N f \|_2^2 = L^2 \sum_{k,\ell=-K}^K
   \lambda_{k, \ell} | \hat{f}_{k,\ell} |^2  ,  \quad
   \| \Delta_N f \|_2^2 = L^2 \sum_{k,\ell=-K}^K
   \lambda_{k, \ell}^2 | \hat{f}_{k,\ell} |^2 .    \label{eigen est-2-2}
\end{eqnarray}
Therefore, the two sides of~\eqref{eigen est-0} become the following expansions, in terms of Fourier coefficients:
\begin{eqnarray}
  &&
  ( \frac{\varepsilon^2}{2} + A \dt^2 ) \| \Delta_N f \|_2^2
   + \kappa^* \| \nabla_N f \|_2^2  +  \langle ( \frac{1}{\dt} {\cal G}_N - L_N ) f , f \rangle
   = L^2 \sum_{k,\ell=-K}^K {\cal R}^{(1)}_{k, \ell} | \hat{f}_{k,\ell} |^2 ,
   \label{eigen est-2-3}
\\
  &&  \mbox{with} \quad
  {\cal R}^{(1)}_{k, \ell} =  \frac{\Lambda_{k,\ell} {\rm e}^{- \dt \Lambda_{k,\ell}}  }{1 - {\rm e}^{- \dt \Lambda_{k,\ell}} }  + ( \frac{\varepsilon^2}{2} + A \dt^2 ) \lambda_{k,\ell}^2 + \kappa^* \lambda_{k, \ell} ,    \label{eigen est-2-4}
\\
  &&
   \gamma^{(0)} \| \nabla_N f \|_2^2
   = L^2 \sum_{k,\ell=-K}^K {\cal R}^{(2)}_{k, \ell} | \hat{f}_{k,\ell} |^2  ,   \quad \mbox{with} \, \, \, {\cal R}^{(2)}_{k, \ell} = \gamma^{(0)} \lambda_{k, \ell} .
   \label{eigen est-2-5}
\end{eqnarray}

The rest work is focused on the comparison between ${\cal R}^{(1)}_{k, \ell}$ and ${\cal R}^{(2)}_{k, \ell}$, for any $k$, $\ell$. First, we observe that
\begin{eqnarray}
    \frac{\varepsilon^2}{2} \lambda_{k,\ell}^2 + \kappa^* \lambda_{k, \ell}
    \ge \frac12 \Lambda_{k, \ell} ,  \quad \mbox{since $\kappa^* \ge \frac12 \kappa$} .
\label{eigen est-3-1}
\end{eqnarray}
Consequently, if $\dt \Lambda_{k, \ell} \le \alpha_0$, (with $\alpha_0$ defined in~\eqref{constants-3}), the following inequality is valid:
\begin{eqnarray}
    {\cal R}^{(1)}_{k, \ell} &\ge&  \frac{\Lambda_{k,\ell} {\rm e}^{- \dt \Lambda_{k,\ell}}  }{1 - {\rm e}^{- \dt \Lambda_{k,\ell}} }  + \frac{\varepsilon^2}{2}  \lambda_{k,\ell}^2 + \kappa^* \lambda_{k, \ell}
    \ge \frac{\Lambda_{k,\ell} {\rm e}^{- \dt \Lambda_{k,\ell}}  }{1 - {\rm e}^{- \dt \Lambda_{k,\ell}} }  + \frac12 \Lambda_{k, \ell}  \nonumber
\\
  &\ge&
  \Lambda_{k,\ell}  \Bigl( \frac{ {\rm e}^{- \dt \Lambda_{k,\ell}}  }{1 - {\rm e}^{- \dt \Lambda_{k,\ell}} }  + \frac12 \Bigr)  \ge  \frac{\gamma^{(0)} }{\kappa}  \Lambda_{k,\ell}
  \nonumber
\\
  &\ge&
  \gamma^{(0)} \lambda_{k, \ell} = {\cal R}^{(2)}_{k, \ell} ,   \quad  \mbox{if $\dt \Lambda_{k, \ell} \le \alpha_0$} .    \label{eigen est-3}
\end{eqnarray}
On the other hand, if $\dt \Lambda_{k, \ell} > \alpha_0$, we have $\Lambda_{k, \ell} > \frac{\alpha_0}{\dt}$, so that
\begin{eqnarray}
    {\cal R}^{(1)}_{k, \ell} &\ge&  \frac{\varepsilon^2}{2}  \lambda_{k,\ell}^2 + \kappa^* \lambda_{k, \ell}  + A \dt^2 \lambda_{k, \ell}^2
    \ge \frac12 \Lambda_{k,\ell} + A \dt^2 \lambda_{k, \ell}^2
  = \frac14 \Lambda_{k,\ell} + \frac14 \Lambda_{k,\ell} + A \dt^2 \lambda_{k, \ell}^2  \nonumber
\\
  &\ge&
  \frac14 \Lambda_{k,\ell} + \frac{\alpha_0}{4 \dt} + A \dt^2 \lambda_{k, \ell}^2
  = \frac{\alpha_0}{4 \dt} + ( \frac{\varepsilon^2}{4} + A \dt^2 ) \lambda_{k, \ell}^2 + \frac{\kappa}{4} \lambda_{k, \ell} .  \label{eigen est-4-1}
\end{eqnarray}
Under the constraint~\eqref{constants-4}, the following quadratic inequality is valid
\begin{eqnarray}
  \frac{\varepsilon^2}{4} + A \dt^2
  \ge A^{1/2} \varepsilon \dt \ge ( \gamma^{(0)} - \frac{\kappa}{4} )^2 \alpha_0^{-1} \dt ,
  \label{eigen est-4-2}
\end{eqnarray}
which in turn results in
\begin{eqnarray}
    {\cal R}^{(1)}_{k, \ell} &\ge&   \frac{\alpha_0}{4 \dt} + ( \gamma^{(0)} - \frac{\kappa}{4} )^2 \alpha_0^{-1} \dt \lambda_{k, \ell}^2 + \frac{\kappa}{4} \lambda_{k, \ell} \nonumber
\\
  &\ge&
  ( \gamma^{(0)} - \frac{\kappa}{4} ) \lambda_{k, \ell}
  + \frac{\kappa}{4} \lambda_{k, \ell} = \gamma^{(0)} \lambda_{k, \ell}
  = {\cal R}^{(2)}_{k, \ell} ,  \quad \mbox{if $\dt \Lambda_{k, \ell} > \alpha_0$} .
  \label{eigen est-4-3}
\end{eqnarray}

Finally, a combination of~\eqref{eigen est-2-3}, \eqref{eigen est-2-5},
\eqref{eigen est-3} and \eqref{eigen est-4-3} yields the desired estimate~\eqref{eigen est-0}. This completes the proof of Proposition~\ref{prop: eigen est}.
\end{proof}




The energy stability of the proposed third order ETD-based scheme~\eqref{scheme-ETD-3rd-0}-\eqref{scheme-ETD-3rd-1} is stated in the following theorem, in a modified version.  

\begin{thm}  \label{thm:energy stab-ETDMs3}
  The numerical solution produced by the proposed scheme~\eqref{scheme-ETD-3rd-0}-\eqref{scheme-ETD-3rd-1} satisfies
\begin{eqnarray}
  \hspace{-0.35in} &&
  \tilde{E}_N (u^{n+1} , u^n, u^{n-1}) \le \tilde{E} (u^n, u^{n-1}, u^{n-2}) ,   \quad \mbox{with}  \nonumber
\\
  \hspace{-0.35in} &&
  \tilde{E}_N (u^{n+1} , u^n, u^{n-1}) = E_N (u^{n+1}) + \gamma_1^{(0)} \| \nabla_N (u^{n+1} - u^n ) \|_2^2 + \gamma^{(0)}_3 \| \nabla_N (u^n - u^{n-1}) \|_2^2  ,  \label{scheme-ETD-3rd-stability-0}
\end{eqnarray}
for any $\dt > 0$, with $\gamma_j^{(0)}$ ($j=1, 2, 3$) defined in~\eqref{constants-1}, provided that~\eqref{constants-4} is satisfied.
\end{thm}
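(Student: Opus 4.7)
The plan is to apply $\mathcal{G}_N = \phi_0(L_N)^{-1}$ to the scheme~\eqref{scheme-ETD-3rd-0}, take the discrete inner product with $u^{n+1} - u^n$, and combine the convexity splitting from Lemma~\ref{lem: energy stab-0} with the spectral coercivity of Proposition~\ref{prop: eigen est} to obtain decay of the modified energy. The remaining multi-step extrapolation residual is then absorbed using the uniform $L^2$-bound on $G_N^{(1)}, G_N^{(2)}$ from~\eqref{prop-1-0-4} together with the Lipschitz property of $g_N$ implied by Lemma~\ref{lem:vector inequality}.

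Since $\mathcal{G}_N(I - {\rm e}^{-\dt L_N}) = \dt L_N$, applying $\mathcal{G}_N$ to~\eqref{scheme-ETD-3rd-0} on the zero-mean subspace (using $\overline{u^{n+1}} = \overline{u^n}$) recasts the scheme as
\begin{equation*}
\mathcal{G}_N(u^{n+1} - u^n) + \dt L_N u^n + A\dt^3\Delta_N^2(u^{n+1} - u^n) + \dt f_N(u^n) = -\dt G_N^{(1)} \mathcal{F}_1^n - \dt G_N^{(2)} \mathcal{F}_2^n,
\end{equation*}
where $\mathcal{F}_1^n = \tfrac{3}{2} f_N(u^n) - 2 f_N(u^{n-1}) + \tfrac{1}{2} f_N(u^{n-2})$ and $\mathcal{F}_2^n = \tfrac{1}{2} f_N(u^n) - f_N(u^{n-1}) + \tfrac{1}{2} f_N(u^{n-2})$. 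Taking the inner product with $u^{n+1} - u^n$, substituting $L_N u^n = L_N u^{n+1} - L_N(u^{n+1} - u^n)$, applying Lemma~\ref{lem: energy stab-0} to the combination $\dt\langle L_N u^{n+1} + f_N(u^n), u^{n+1} - u^n\rangle$, and dividing through by $\dt$, I would obtain $E_N(u^{n+1}) - E_N(u^n) + Q \le -\langle G_N^{(1)}\mathcal{F}_1^n, u^{n+1} - u^n\rangle - \langle G_N^{(2)}\mathcal{F}_2^n, u^{n+1} - u^n\rangle$, where $Q = \langle(\tfrac{1}{\dt}\mathcal{G}_N - L_N)(u^{n+1} - u^n), u^{n+1} - u^n\rangle + (\tfrac{\varepsilon^2}{2} + A\dt^2)\|\Delta_N(u^{n+1} - u^n)\|_2^2 + \kappa^*\|\nabla_N(u^{n+1} - u^n)\|_2^2$. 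Proposition~\ref{prop: eigen est}, with $A$ chosen by~\eqref{constants-4}, then gives $Q \ge \gamma^{(0)}\|\nabla_N(u^{n+1} - u^n)\|_2^2$.

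For the right-hand side I rewrite $\mathcal{F}_1^n = \tfrac{3}{2}(f_N(u^n) - f_N(u^{n-1})) - \tfrac{1}{2}(f_N(u^{n-1}) - f_N(u^{n-2}))$, and analogously for $\mathcal{F}_2^n$, exploiting $f_N(u^k) - f_N(u^{k-1}) = \nabla_N\cdot(g_N(u^k) - g_N(u^{k-1}))$ with $g_N$ from~\eqref{defi-g}. Summation by parts transfers $\nabla_N$ onto $u^{n+1} - u^n$; the Lipschitz estimate $\|g_N(u^k) - g_N(u^{k-1})\|_2 \le (1+\kappa)\|\nabla_N(u^k - u^{k-1})\|_2$ (from Lemma~\ref{lem:vector inequality} applied to the rational part together with the linear $\kappa\nabla_N u$ contribution), the bound $\|G_N^{(j)} v\|_2 \le C_4\|v\|_2$ from~\eqref{prop-1-0-4} (using that $G_N^{(j)}$ commutes with $\nabla_N$ in Fourier space), and Young's inequality together produce the upper bound $\gamma_1^{(0)}\|\nabla_N(u^{n+1} - u^n)\|_2^2 + \gamma_2^{(0)}\|\nabla_N(u^n - u^{n-1})\|_2^2 + \gamma_3^{(0)}\|\nabla_N(u^{n-1} - u^{n-2})\|_2^2$. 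Invoking the algebraic identity $\gamma^{(0)} - \gamma_1^{(0)} = \gamma_2^{(0)} + \gamma_3^{(0)} = \gamma_1^{(0)}$, rearrangement (adding $\gamma_3^{(0)}\|\nabla_N(u^n - u^{n-1})\|_2^2$ to both sides) yields the telescoping inequality $\tilde{E}_N(u^{n+1}, u^n, u^{n-1}) \le \tilde{E}_N(u^n, u^{n-1}, u^{n-2})$.

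The main obstacle is the constant-matching between the two spectral ingredients. The sharp bound $C_4 = (1 - {\rm e}^{-2})^{-1}$ in~\eqref{prop-1-0-4}, which is forced by the explicit multi-step extrapolation, dictates $\gamma^{(0)} = 3 C_4(1+\kappa)$; Proposition~\ref{prop: eigen est} must then deliver coercivity of strength $\gamma^{(0)}\|\nabla_N\cdot\|_2^2$ uniformly across all Fourier modes. At low modes ($\dt\Lambda_{k,\ell} \le \alpha_0$) this coercivity is supplied by the $(\tfrac{1}{\dt}\mathcal{G}_N - L_N)$ term together with the physical diffusion, while at high modes ($\dt\Lambda_{k,\ell} > \alpha_0$) the exponential factor $\tfrac{{\rm e}^{-\dt\Lambda_{k,\ell}}}{1-{\rm e}^{-\dt\Lambda_{k,\ell}}}$ becomes negligible and the missing coercivity must be supplied by the Douglas--Dupont regularization $A\dt^2\|\Delta_N\cdot\|_2^2$. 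Calibrating $A$ precisely via~\eqref{constants-4} is exactly what makes this mode-by-mode comparison work, and is what renders the modified energy stability unconditional in $\dt$.
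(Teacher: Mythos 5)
Your proposal is correct and follows essentially the same route as the paper's proof: inverting $\phi_0(L_N)$, invoking the convexity estimate of Lemma~\ref{lem: energy stab-0}, absorbing the multi-step residual via the Lipschitz bound on $g_N$ and the uniform bounds \eqref{prop-1-0-4} on $G_N^{(1)}, G_N^{(2)}$, and closing with the mode-by-mode coercivity of Proposition~\ref{prop: eigen est}. The final rearrangement using $\gamma_2^{(0)}+\gamma_3^{(0)}=\gamma_1^{(0)}$, which the paper leaves implicit, is spelled out correctly in your argument.
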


\begin{proof}
  Applying an operator $(\dt \phi_0 (L_N) )^{-1}$ to~\eqref{scheme-ETD-3rd-0}, and making use of the operators defined in~\eqref{scheme-ETD-3rd-1}, \eqref{operator-2}, \eqref{operator-3}, we get a rewritten form of the nonlinear term $f_N (u^n)$:
\begin{eqnarray}
  f_N (u^n) &=& - L_N (I - {\rm e}^{-\dt L_N} )^{-1} (u^{n+1} - u^n) - L_N u^n - A \dt^2 \Delta_N^2 (u^{n+1} - u^n )  \nonumber
\\
  &&
  - G^{(1)}_N  ( \frac32 f_N (u^n) - 2 f_N (u^{n-1}) + \frac12 f_N (u^{n-2}) )  \nonumber
\\
   &&
- G^{(2)}_N \left( \frac12 f_N (u^n) - f_N (u^{n-1}) + \frac12 f_N (u^{n-2}) \right) .
  \label{scheme-ETD-3rd-stability-1}
\end{eqnarray}
With an application of the preliminary energy inequality~\eqref{energy stab-0} in Lemma~\ref{lem: energy stab-0}, we have
\begin{eqnarray}
  &&
  E_N (u^{n+1}) - E_N (u^n) + \frac{\varepsilon^2}{2} \| \Delta_N (u^{n+1} - u^n) \|_2^2
   + \kappa^* \| \nabla_N (u^{n+1} - u^n ) \|_2^2  \nonumber
 \\
   &\le& \langle L_N u^{n+1} + f_N (u^n), u^{n+1} - u^n \rangle \nonumber
\\
  &=&
  \langle - L_N (I - {\rm e}^{-\dt L_N} )^{-1} (u^{n+1} - u^n) + L_N (u^{n+1} - u^n) , u^{n+1} - u^n \rangle  \nonumber
\\
  &&
  - A \dt^2 \langle \Delta_N^2 (u^{n+1} - u^n ) , u^{n+1} - u^n \rangle  \nonumber
\\
  &&
  - \langle G^{(1)}_N  ( \frac32 f_N (u^n) - 2 f_N (u^{n-1}) + \frac12 f_N (u^{n-2}) ) , u^{n+1} - u^n \rangle   \nonumber
\\
  &&
  - \langle G^{(2)}_N  ( \frac12 f_N (u^n) - f_N (u^{n-1}) + \frac12 f_N (u^{n-2}) ) , u^{n+1} - u^n \rangle  .  \label{scheme-ETD-3rd-stability-2}
\end{eqnarray}
The first term on the right hand side turns out to be
\begin{eqnarray}
  &&
  \langle - L_N (I - {\rm e}^{-\dt L_N} )^{-1} (u^{n+1} - u^n) + L_N (u^{n+1} - u^n) , u^{n+1} - u^n \rangle   \nonumber
\\
  &=&
  - \frac{1}{\dt} \langle {\cal G}_N (u^{n+1} - u^n) , u^{n+1} - u^n \rangle + \langle L_N (u^{n+1} - u^n) , u^{n+1} - u^n \rangle  .  \label{scheme-ETD-3rd-stability-3-1}
\end{eqnarray}
The second term on the right hand side becomes
\begin{eqnarray}
   - A \dt^2 \langle \Delta_N^2 (u^{n+1} - u^n ) , u^{n+1} - u^n \rangle
   = - A \dt^2 \| \Delta_N^2 (u^{n+1} - u^n ) \|_2^2 . \label{scheme-ETD-3rd-stability-3-2}
\end{eqnarray}

For the third term on the right hand side of~\eqref{scheme-ETD-3rd-stability-2}, we begin with the following rewritten form:
\begin{eqnarray}
  &&
  \frac32 f_N (u^n) - 2 f_N (u^{n-1}) + \frac12 f_N (u^{n-2})  \nonumber
\\
  &=&
    \frac32 \nabla_N \cdot ( g_N (u^n) - g_N (u^{n-1}) )
    - \frac12 \nabla_N \cdot ( g_N (u^{n-1}) - g_N (u^{n-2}) ) .
    \label{scheme-ETD-3rd-stability-4-1}
\end{eqnarray}
Meanwhile, a careful calculation
\begin{eqnarray}
   g_N (u^n) - g_N (u^{n-1}) = \frac{\nabla_N u^n}{1+|\nabla_N u^n |^2} - \frac{\nabla_N u^{n-1}}{1+|\nabla_N u^{n-1}|^2}  + \kappa \nabla_N (u^n - u^{n-1})   \label{scheme-ETD-3rd-stability-4-2}
\end{eqnarray}
implies that
\begin{eqnarray}
  \| g_N (u^n) - g_N (u^{n-1}) \|_2
  \le (1 + \kappa ) \| \nabla_N (u^n - u^{n-1}) \|_2 , \label{scheme-ETD-3rd-stability-4-3}
\end{eqnarray}
with an application of Lemma~\ref{lem:vector inequality}. Then we apply summation by part formula and obtain
\begin{eqnarray}
  &&
  - \frac32 \langle G^{(1)}_N  \nabla_N \cdot ( g_N (u^n) - g_N (u^{n-1}) ) , u^{n+1} - u^n \rangle  \nonumber
\\
  &=&
   \frac32 \langle G^{(1)}_N  ( g_N (u^n) - g_N (u^{n-1}) ) , \nabla_N ( u^{n+1} - u^n ) \rangle  \nonumber
\\
  &\le&
  \frac32 \| G^{(1)}_N ( g_N (u^n) - g_N (u^{n-1}) ) \|_2 \cdot \| \nabla_N ( u^{n+1} - u^n ) \|_2   \nonumber
\\
  &\le&
  \frac32 C_4 \| g_N (u^n) - g_N (u^{n-1}) \|_2 \cdot \| \nabla_N ( u^{n+1} - u^n ) \|_2  \nonumber
\\
  &\le&
  \frac32 C_4 (1 + \kappa) \| \nabla_N (u^n - u^{n-1}) \|_2 \cdot \| \nabla_N ( u^{n+1} - u^n ) \|_2   \nonumber
\\
  &\le&
  \frac34 C_4 (1 + \kappa) ( \| \nabla_N (u^n - u^{n-1}) \|_2^2  + \| \nabla_N ( u^{n+1} - u^n ) \|_2^2 ) ,  \label{scheme-ETD-3rd-stability-4-4}
\end{eqnarray}
in which inequality~\eqref{prop-1-0-4} has been applied in the third step. Similar estimate could be derived in the same fashion:
\begin{eqnarray}
  &&
  \frac12 \langle G^{(1)}_N  \nabla_N \cdot ( g_N (u^{n-1}) - g_N (u^{n-2}) ) , u^{n+1} - u^n \rangle  \nonumber
\\
  &\le&
  \frac14 C_4 (1 + \kappa) ( \| \nabla_N (u^{n-1} - u^{n-2}) \|_2^2  + \| \nabla_N ( u^{n+1} - u^n ) \|_2^2 ) .  \label{scheme-ETD-3rd-stability-4-5}
\end{eqnarray}
In turn, a combination of~\eqref{scheme-ETD-3rd-stability-4-1}, \eqref{scheme-ETD-3rd-stability-4-4} and \eqref{scheme-ETD-3rd-stability-4-5} yields
\begin{eqnarray}
  \hspace{-0.35in} &&
  - \langle G^{(1)}_N  ( \frac32 f_N (u^n) - 2 f_N (u^{n-1}) + \frac12 f_N (u^{n-2}) ), u^{n+1} - u^n \rangle  \nonumber
\\
  \hspace{-0.35in} &\le&
  C_4 (1 + \kappa) \Bigl( \| \nabla_N (u^{n+1} - u^n) \|_2^2 + \frac34
  \| \nabla_N (u^n - u^{n-1}) \|_2^2 +  \frac14 \| \nabla_N (u^{n-1} - u^{n-2}) \|_2^2 \Bigr) .  \label{scheme-ETD-3rd-stability-4-6}
\end{eqnarray}

The last term on the right hand side of~\eqref{scheme-ETD-3rd-stability-2} could be analyzed in a similar form; the details are left to interested readers:
\begin{eqnarray}
  \hspace{-0.35in} &&
  - \langle G^{(2)}_N  ( \frac12 f_N (u^n) - f_N (u^{n-1}) + \frac12 f_N (u^{n-2}) ), u^{n+1} - u^n \rangle  \nonumber
\\
  \hspace{-0.35in} &\le&
  C_5 (1 + \kappa) \Bigl( \frac12 \| \nabla_N (u^{n+1} - u^n) \|_2^2 + \frac14 \| \nabla_N (u^n - u^{n-1}) \|_2^2 + \frac14 \| \nabla_N (u^{n-1} - u^{n-2}) \|_2^2 \Bigr).  \label{scheme-ETD-3rd-stability-5-1}
\end{eqnarray}
Its combination with~\eqref{scheme-ETD-3rd-stability-4-6} leads to
\begin{eqnarray}
    \hspace{-0.35in} &&
 -  \langle G^{(1)}_N  ( \frac32 f_N (u^n) - 2 f_N (u^{n-1}) + \frac12 f_N (u^{n-2}) ), u^{n+1} - u^n \rangle  \nonumber
\\
  \hspace{-0.35in} &&
  - \langle G^{(2)}_N  ( \frac12 f_N (u^n) - f_N (u^{n-1}) + \frac12 f_N (u^{n-2}) ), u^{n+1} - u^n \rangle  \nonumber
\\
  \hspace{-0.35in} &\le&
  \gamma^{(0}_1  \| \nabla_N (u^{n+1} - u^n) \|_2^2 + \gamma^{(0}_2 \| \nabla_N (u^n - u^{n-1}) \|_2^2 + \gamma^{(0)}_3 \| \nabla_N (u^{n-1} - u^{n-2}) \|_2^2 ,   \label{scheme-ETD-3rd-stability-5-2}
\end{eqnarray}
in which $\gamma^{(0)}_j$ ($j=1, 2, 3$) has been given by~\eqref{constants-1}, and we have used the fact that $C_4 = C_5 = \frac{1}{1 - {\rm e}^2}$.

  Finally, a substitution of~\eqref{scheme-ETD-3rd-stability-3-1}, \eqref{scheme-ETD-3rd-stability-3-2},  and \eqref{scheme-ETD-3rd-stability-5-2} into~\eqref{scheme-ETD-3rd-stability-2} results in \eqref{scheme-ETD-3rd-stability-0}. Notice that we have made use of the following inequality
\begin{eqnarray}
  &&
     ( \frac{\varepsilon^2}{2} + A \dt^2) \| \Delta_N (u^{n+1} - u^n) \|_2^2
   + \kappa^* \| \nabla_N (u^{n+1} - u^n ) \|_2^2 \nonumber
\\
  &+&
  \langle ( \frac{1}{\dt} {\cal G}_N - L_N ) (u^{n+1} - u^n) , u^{n+1} - u^n \rangle
  \ge \gamma^{(0)} \| \nabla_N (u^{n+1} - u^n) \|_2^2 ,
\end{eqnarray}
which comes from an application of Proposition~\ref{prop: eigen est}, provided that~\eqref{constants-4} is satisfied. This completes the proof of Theorem~\ref{thm:energy stab-ETDMs3}.
\end{proof}

\begin{cor}  \label{cor:energy bound-ETDMs3}
  The numerical solution~\eqref{scheme-ETD-3rd-0}-\eqref{scheme-ETD-3rd-1}, we have
\begin{eqnarray}
  E_N (u^k) \le E_N (u^0) + \gamma_1^{(0)} \| \nabla_N (u^0 - u^{-1} ) \|_2^2 + \gamma^{(0)}_3 \| \nabla_N (u^{-1} - u^{-2}) \|_2^2 := \tilde{C}_0 , \quad \forall k \ge 0 ,
  \label{energy bound-0}
\end{eqnarray}
provided that~\eqref{constants-4} is satisfied.
\end{cor}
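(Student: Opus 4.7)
The plan is to derive the bound as an immediate consequence of the modified energy dissipation established in Theorem~\ref{thm:energy stab-ETDMs3}. First, I would iterate the one-step inequality
\[
\tilde{E}_N (u^{n+1}, u^n, u^{n-1}) \le \tilde{E}_N (u^n, u^{n-1}, u^{n-2})
\]
by a straightforward induction on $n$ from $0$ up to $k-1$, producing
\[
\tilde{E}_N (u^k, u^{k-1}, u^{k-2}) \le \tilde{E}_N (u^0, u^{-1}, u^{-2}).
\]
Since the constants $\gamma_1^{(0)}$ and $\gamma_3^{(0)}$ are positive by their explicit definitions in~\eqref{constants-1}, and the quantities $\|\nabla_N (u^k - u^{k-1})\|_2^2$ and $\|\nabla_N (u^{k-1} - u^{k-2})\|_2^2$ are non-negative, the left-hand side dominates $E_N (u^k)$ alone; dropping these non-negative correction terms yields $E_N(u^k) \le \tilde{E}_N(u^k, u^{k-1}, u^{k-2})$.

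Combining these two inequalities and then expanding the right-hand side $\tilde{E}_N (u^0, u^{-1}, u^{-2})$ according to its definition in Theorem~\ref{thm:energy stab-ETDMs3}, I would obtain exactly
\[
E_N (u^k) \le E_N (u^0) + \gamma_1^{(0)} \| \nabla_N (u^0 - u^{-1}) \|_2^2 + \gamma_3^{(0)} \| \nabla_N (u^{-1} - u^{-2}) \|_2^2 = \tilde{C}_0,
\]
which is precisely the claim. The hypothesis~\eqref{constants-4} on the artificial regularization coefficient $A$ is inherited verbatim from Theorem~\ref{thm:energy stab-ETDMs3}; no additional condition on $\dt$, $h$, or their ratio is needed, so the resulting bound is uniform in $k$.

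There is essentially no genuine obstacle here: the corollary is a purely bookkeeping consequence of the monotone decay of the modified energy $\tilde{E}_N$ together with the non-negativity of the Douglas--Dupont-type gradient-difference corrections that augment $E_N$ to form $\tilde{E}_N$. The substantive analytic work has already been absorbed into Theorem~\ref{thm:energy stab-ETDMs3} via Lemma~\ref{lem: energy stab-0}, Proposition~\ref{prop: eigen est}, and the operator bound~\eqref{prop-1-0-4}; once those are in place, the uniform-in-time energy bound follows by telescoping and dropping non-negative terms.
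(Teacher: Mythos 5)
Your proposal is correct and follows exactly the paper's argument: telescoping the modified energy inequality from Theorem~\ref{thm:energy stab-ETDMs3} and dropping the non-negative gradient-difference correction terms (which is valid since $\gamma_1^{(0)}, \gamma_3^{(0)} > 0$) gives $E_N(u^k) \le \tilde{E}_N(u^k, u^{k-1}, u^{k-2}) \le \cdots \le \tilde{E}_N(u^0, u^{-1}, u^{-2}) = \tilde{C}_0$. No differences worth noting.
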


\begin{proof}
By the modified energy inequality~\eqref{scheme-ETD-3rd-stability-0}, the following induction analysis could be performed:
\begin{eqnarray}
  \hspace{-0.35in} &&
  E_N (u^k ) \le \tilde{E}_N (u^k , u^{k-1}, u^{k-2}) \le ... \le  \tilde{E}_N (u^0 , u^{-1}, u^{-2}) := \tilde{C}_0 ,  \quad \forall k \ge 0 .
  \label{energy bound-1}
\end{eqnarray}
\end{proof}

\begin{rem}
For the first order ETD scheme~\eqref{scheme-ETD-1st}, the energy stability has been proved in~\cite{Ju17}, namely, $E_N (u^{n+1}) \le E_N (u^n)$, for any $\dt >0$, provided that $\kappa \ge \frac18$. A similar analysis could also be found in an earlier work~\cite{chen12}.

For the original version of the second order accurate ETD-based scheme~\eqref{scheme-ETD-2nd}, such an energy estimate could hardly be theoretically justified, due to its multi-step nature. Instead, an alternate energy inequality has been derived in~\cite{Ju17} for~\eqref{scheme-ETD-2nd}:
 \begin{eqnarray}
  E_N (u^{n+1}) \le E_N (u^n) + \frac{1+\kappa}{2} ( \| \nabla_N (u^{n+1} - u^n) \|_2^2 + \| \nabla_N (u^n - u^{n-1}) \|_2^2 )  .  \label{scheme-ETD-2nd-stability-0}
\end{eqnarray}
However, because of the two positive terms on the right hand side, a uniform in time bound of the original energy functional is not theoretically available from such an energy inequality.

For the third order ETD-based scheme~\eqref{scheme-ETD-3rd-Ju17} reported in~\cite{Ju17}, a bound estimate for the original energy functional has not been available, either. Our energy analysis has revealed that, an artificial regularization term is needed to theoretically justify the energy stability for a higher-order ETD-based schemes, as demonstrated in the proof of Theorem~\ref{thm:energy stab-ETDMs3}.
\end{rem}

\begin{rem}
The requirement~\eqref{constants-4} for the parameter $A$ indicates an order of $A=O (\varepsilon^{-2})$, since $\gamma^{(0)} = O(1)$, $\alpha_0 = O(1)$. Such a requirement is based on a subtle fact that, an extra stability estimate from the surface diffusion term has to be used to balance the stability loss coming from the multi-step explicit treatment of the nonlinear terms, and the surface diffusion coefficient is given by $\varepsilon^2$ in the physical parameter.

On the other hand, such a parameter order $A= O (\varepsilon^{-2})$ is only used for the theoretical justification of the energy stability. In the practical computations, an extensive choice of $A= O (1)$ has never led to any energy stability loss for the proposed third order scheme.
\end{rem}

\begin{rem}
There have been other approaches to obtain the desired stability property for the no-slope-selection thin film model. In another recent work~\cite{chen18b}, an artificial stabilizing term $A \dt^2\frac{\partial\Delta^2 u}{\partial t}$ is added to the second order accurate ETD-based scheme, which was exactly integrated over the time interval $(t^n, t^{n+1})$. Furthermore, a careful analysis justifies the numerical stability, with the parameter $A$ of an order $A = O (1)$. A similar result has also been reported in~\cite{LiW18}, in which a second order modified BDF approximation is used, and an artificial regularization with a parameter $A \ge \frac{25}{16}$ is sufficient to ensure the energy stability.

The primary reason for the difference in the order of the artificial parameter $A$ between the second and third order numerical schemes is based on the following fact: for the second order scheme, the artificial regularization, with magnitude $O(\dt^2)$, and the temporal discretization terms are sufficient to theoretically justify the energy stability; while for the third order scheme, these two terms are not sufficient to ensure the numerical stability, since the artificial regularization term has to be in the order of $O(\dt^3)$ to keep the third order temporal accuracy. As a result, the stability estimate for surface diffusion term has to be involved, so that an coefficient $A =O (\varepsilon^{-2})$ is needed to pass through the stability analysis.
\end{rem}

\begin{rem}
For the epitaxial thin film growth model with slope selection~\eqref{energy-SS}, there have been some energy stability analysis works for the linear stabilization schemes. In more details, the artificial regularization parameter is required to be of order $O(\varepsilon^{-2} | \ln \varepsilon |)$ for the first order scheme~\cite{LiD2016a}, while such a parameter becomes of order $O (\varepsilon^{-m_0})$ (with $m_0 \ge 10$) for the second order scheme~\cite{LiD2017, LiD2017b, SongH2017}, in which the authors used the Cahn-Hilliard model to illustrate the analysis techniques. For the third and higher order numerical schemes, there has been no theoretical justification of the energy stability analysis for the slope-selection model.

The reason why we are able to obtain such a sharp theoretical result is associated with the subtle fact that: the nonlinear term in the NSS equation~\eqref{equation-NSS} has automatically bounded higher order derivatives in the $L^\infty$ norm, which enables one to derive an energy estimate with a much reduced artificial regularization for higher order numerical schemes.
\end{rem}

\section{The convergence analysis for the third order ETD-based scheme}
\label{sec-convergence}

The global existence of weak solution, strong solution and smooth solution for the NSS equation (\ref{equation-NSS}) has been established in~\cite{libo03}. In more details, a global in time estimate of $L^\infty (0,T; H^m) \cap L^2 (0,T; H^{m+2})$ for the phase variable was proved, assuming initial data in $H^m$, for any $m \ge 2$. Therefore, with an initial data with sufficient regularity, we could assume that the exact solution has regularity of class $\mathcal{R}$:
		\begin{equation}
			u_{e} \in \mathcal{R} := H^4 (0,T; C^0) \cap H^1 (0,T; H^4) \cap H^3 (0,T; H^{m+2}) \cap L^\infty (0,T; H^{m+4}).
			\label{assumption:regularity.1}
		\end{equation}


Define $U_N (\, \cdot \, ,t) := {\cal P}_N u_e (\, \cdot \, ,t)$, the (spatial) Fourier projection of the exact solution into ${\cal B}^K$, the space of trigonometric polynomials of degree to and including  $K$.  The following projection approximation is standard: if $\phi_e \in L^\infty(0,T;H^\ell_{\rm per}(\Omega))$, for some $\ell\in\mathbb{N}$,
	\begin{equation}
\| U_N - u_e \|_{L^\infty(0,T;H^m)}
   \le C h^{\ell-k} \| u_e \|_{L^\infty(0,T;H^\ell)},  \quad \forall \ 0 \le k \le \ell .
	\label{projection-est-0}
	\end{equation}
By $U_N^m$, $U^m$ we denote $U_N(\, \cdot \, , t^m)$ and $u_e (\, \cdot \, , t^m)$, respectively, with $t^m = m\cdot \dt$. Since $U_N \in {\cal P}_K$, the mass conservative property is available at the discrete level:
	\begin{equation}
\overline{U_N^m} = \frac{1}{|\Omega|}\int_\Omega \, U_N ( \cdot, t_m) \, d {\bf x} = \frac{1}{|\Omega|}\int_\Omega \, U_N ( \cdot, t_{m-1}) \, d {\bf x} = \overline{U_N^{m-1}} ,  \quad \forall \ m \in\mathbb{N}.
	\label{mass conserv-1}
	\end{equation}
On the other hand, the solution of the proposed scheme~\eqref{scheme-ETD-3rd-0} is also mass conservative at the discrete level:
	\begin{equation}
\overline{u^m} = \overline{u^{m-1}} ,  \quad \forall \ m \in \mathbb{N} .
	\label{mass conserv-2}
	\end{equation}
Meanwhile, we denote $U^m$ as the interpolation values of $U_N$ at discrete grid points at time instant $t^m$: $U_{i,j}^m :=  U_N (x_i, y_j, t^m)$.
As indicated before, we use the mass conservative projection for the initial data:  
	\begin{equation}
u^0_{i,j} = U^0_{i,j} := U_N (x_i, y_j, t=0) .
	\label{initial data-0}
	\end{equation}	
The error grid function is defined as
	\begin{equation}
e^m := U^m - u^m ,  \quad \forall \ m \in \left\{ 0 ,1 , 2, 3, \cdots \right\} .
	\label{error function-1}
	\end{equation}
Therefore, it follows that  $\overline{e^m} =0$, for any $m \in \left\{ 0 ,1 , 2, 3, \cdots \right\}$. 

For the proposed third order accurate scheme~\eqref{scheme-ETD-3rd-0}-\eqref{scheme-ETD-3rd-1}, the convergence result is stated below.

    \begin{thm}
    	\label{thm:convergence}
    	Given initial data $U_N^{0}$, $U_N^{-1}$, $U_N^{-2} \in C_{\rm per}^{m+4} (\overline{\Omega})$, with periodic boundary conditions, suppose the unique solution for the the NSS equation (\ref{equation-NSS}) is of regularity class $\mathcal{R}$. Then, provided $\dt$ and $h$ are sufficiently small, 
for all positive integers $\ell$, such that $\dt \cdot \ell \le T$, we have
    	\begin{eqnarray}
       &&	
    	\| \nabla_N e^\ell \|_2 +  \Bigl( \varepsilon^2 \dt   \sum_{m=1}^{\ell} \| \nabla_N \Delta_N e^m \|_2^2 \Bigr)^{1/2}
    		\le C ( \dt^3 + h^m ),   \label{convergence-0-2}
    	\end{eqnarray}
    	where $C>0$ is independent of $\dt$ and $h$.
    \end{thm}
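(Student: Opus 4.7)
The plan is to follow the classical consistency-plus-stability route, but with the ETD-specific two-stage decomposition alluded to in the introduction. First I would derive a consistency formulation: plug the Fourier projection $U_N$ of the exact solution into the scheme and Taylor-expand $u(t_{n+1}) = e^{-\dt L_N}u(t_n) - \int_0^{\dt} e^{-(\dt-\tau)L_N} f_N(u(t_n+\tau))\,d\tau$ around the three back-values $t^n, t^{n-1}, t^{n-2}$. Because the Lagrange interpolation of $f_N(u)$ through these three nodes is third-order accurate, and because the added Douglas-Dupont term $-A\dt^2\phi_0(L_N)\Delta_N^2(U_N^{n+1}-U_N^n)$ carries a factor $\dt^2 \cdot \dt = \dt^3$ (since $U_N^{n+1}-U_N^n = O(\dt)$), the local truncation error is $O(\dt^4)$ pointwise in time. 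The Fourier projection contributes an $O(h^m)$ piece through \eqref{projection-est-0}, and the aliasing error from $f_N(U_N)$ is controlled by Lemma~\ref{lemma:aliasing error-2}. So the consistency residual $\tau^n$ satisfies $\nrm{\tau^n}_2 \le C(\dt^4 + \dt h^m)$ in a form that, after division by $\dt$ in the error equation, yields the expected rate.

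Subtracting the numerical scheme from the consistency identity gives an error equation of the form
\begin{equation*}
e^{n+1} = e^{-L_N\dt} e^n - A\dt^3\phi_0(L_N)\Delta_N^2(e^{n+1}-e^n) - \dt\phi_0(L_N)\mathcal{N}^n - \dt\phi_1(L_N)\mathcal{N}^n_1 - \dt\phi_2(L_N)\mathcal{N}^n_2 + \tau^n,
\end{equation*}
where each $\mathcal{N}^n_\bullet$ is a linear combination of $f_N(U_N^k)-f_N(u^k)$ at three consecutive levels. Applying $(\dt\phi_0(L_N))^{-1}$ as in the proof of Theorem~\ref{thm:energy stab-ETDMs3}, then taking the inner product with $e^{n+1}-e^n$, the linear-part manipulations of \eqref{scheme-ETD-3rd-stability-3-1}-\eqref{scheme-ETD-3rd-stability-3-2} carry over verbatim. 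The key difference is that the difference-of-nonlinear-terms $g_N(U_N^k)-g_N(u^k)$ must be bounded by $(1+\kappa)\|\nabla_N e^k\|_2$ via Lemma~\ref{lem:vector inequality}; combined with \eqref{prop-1-0-4}, this produces a discrete Gronwall-friendly bound. Using Proposition~\ref{prop: eigen est} with the same coefficient $A$ absorbs the lost stability, so I obtain an inequality of the form
\begin{equation*}
\tilde{\mathcal{E}}(e^{n+1},e^n,e^{n-1}) - \tilde{\mathcal{E}}(e^n,e^{n-1},e^{n-2}) + \tfrac{\varepsilon^2}{2}\nrm{\Delta_N(e^{n+1}-e^n)}_2^2 \le C\dt\sum_{k=n-2}^{n+1}\nrm{\nabla_N e^k}_2^2 + C\dt^{-1}\nrm{\tau^n}_2^2,
\end{equation*}
with the modified error energy $\tilde{\mathcal{E}}$ controlling $\nrm{\nabla_N e^{n+1}}_2^2$. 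To recover the $\ell^2(0,T;H_h^3)$ norm I also need to test against $-\Delta_N(e^{n+1}-e^n)$ (or equivalently use the full $L_N$ dissipation), which produces the $\varepsilon^2\dt\sum\nrm{\nabla_N\Delta_N e^m}_2^2$ term on the left.

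The main obstacle is closing the nonlinear estimate without assuming $L^\infty$-control on $\nabla_N u^n$ a priori, since the Lipschitz bound in Lemma~\ref{lem:vector inequality} is global but the resulting right-hand side involves $\nabla_N e^k$ in $\ell^2$, while bounding $f_N(U_N)-f_N(u)$ in $\ell^2$ more sharply (to reach $H_h^1$ convergence) requires a quasilinear Lipschitz estimate that does depend on $\nrm{\nabla_N u^n}_\infty$. To handle this, I would run a bootstrap/induction in which the induction hypothesis says $\nrm{\nabla_N e^m}_\infty \le 1$, say, for all $m \le n$; combined with the consistency bound $\nrm{\nabla_N U_N^m}_\infty \le C$ from the regularity assumption \eqref{assumption:regularity.1} and Lemma~\ref{lemma:aliasing error-1} (needed to control the aliased interpolant), this yields $\nrm{\nabla_N u^m}_\infty \le C$ and hence a true Lipschitz constant for the map $u \mapsto f_N(u)$ on the relevant ball. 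Then an inverse inequality $\nrm{\nabla_N e^m}_\infty \le C h^{-1}\nrm{\nabla_N e^m}_2 \le Ch^{-1}(\dt^3+h^m)$ recovers the induction hypothesis for $m \ge 2$, which is where the unconditional scaling law between $\dt$ and $h$ is used (with $m$ large enough).

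Finally, Gronwall's inequality applied to the energy-type estimate, together with the consistency bound $\sum_n \dt^{-1}\nrm{\tau^n}_2^2 \le C(\dt^6 + h^{2m})$, yields $\nrm{\nabla_N e^\ell}_2^2 + \varepsilon^2\dt\sum_{m=1}^{\ell}\nrm{\nabla_N\Delta_N e^m}_2^2 \le C(\dt^6 + h^{2m})$, which is \eqref{convergence-0-2}. The initialization of the multi-step scheme (values of $u^{-1},u^{-2}$) is assumed to be performed by a one-step third-order starter so that $\tilde{\mathcal{E}}(e^0,e^{-1},e^{-2}) \le C(\dt^6+h^{2m})$, closing the induction.
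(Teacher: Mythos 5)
Your skeleton (consistency, an a priori assumption on the error, a linearized stability estimate closed by discrete Gronwall, then recovery of the assumption) matches the paper's, but two of your central steps do not go through as described. First, the test functions and the nonlinear estimate. Carrying over the manipulations of the stability proof ``verbatim'' --- i.e.\ pairing the ${\cal G}_N$-premultiplied error equation with $e^{n+1}-e^n$ or with $-\Delta_N(e^{n+1}-e^n)$ --- produces only telescoping norms and \emph{difference} quantities such as $\|\Delta_N(e^{n+1}-e^n)\|_2^2$; these do not sum to the accumulating dissipation $\varepsilon^2 \dt \sum_m \|\nabla_N\Delta_N e^m\|_2^2$ required in \eqref{convergence-0-2}, and that dissipation is also exactly what absorbs the nonlinear error terms at levels $n$, $n-1$, $n-2$. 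The paper instead splits off the purely linear stage $e^{n+1,*}={\rm e}^{-\dt L_N}e^n$, tests it against $-\Delta_N(e^{n+1,*}+e^n)$ to extract $\langle L_N e^n, -\Delta_N e^n\rangle = \varepsilon^2\|\nabla_N\Delta_N e^n\|_2^2+\kappa\|\Delta_N e^n\|_2^2$ as in \eqref{convergence-H1-4}--\eqref{convergence-H1-5}, and tests the nonlinear stage against $-2\Delta_N e^{n+1}$. Moreover, a crucial structural difference from the stability proof is that the nonlinear differences here are full errors $g_N(U^k)-g_N(u^k)\sim\nabla_N e^k$ rather than time increments, so the global Lipschitz bound of Lemma~\ref{lem:vector inequality} alone leaves you with terms that either lack a factor of $\dt$ (ruining Gronwall) or require absorbing $\|\nabla_N\Delta_N e^{n+1}\|_2$ into dissipation that is not yet available. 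The estimate the $H_h^1$ analysis actually runs on is $\|\nabla_N(f_N(U^k)-f_N(u^k))\|_2\le C_0^{(2)}\|\nabla_N\Delta_N e^k\|_2$ (Proposition~\ref{prop:prop 2}), whose proof needs the aliasing control of Lemma~\ref{lemma:aliasing error-2} for the non-polynomial nonlinearity $\nabla u/(1+|\nabla u|^2)$; Lemma~\ref{lemma:aliasing error-1}, which you invoke, applies only to functions in ${\cal P}_{mK}$ and does not cover this term.

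Second, the closure of the bootstrap. Your recovery step $\|\nabla_N e^m\|_\infty \le C h^{-1}\|\nabla_N e^m\|_2 \le C h^{-1}(\dt^3+h^m)\le 1$ forces $\dt^3\lesssim h$, a genuine coupling between $\dt$ and $h$ (your phrase ``unconditional scaling law'' is a contradiction in terms); this yields only a conditional convergence result, whereas the point of the theorem, emphasized after \eqref{a priori-6}, is that the smallness constraints on $\dt$ and $h$ are independent. The paper avoids any coupling by (i) phrasing the a priori assumption as $\|\nabla_N\Delta_N e^k\|_2\le 1$ in \eqref{a priori-0}, a quantity that the $\ell^2(0,T;H_h^3)$ part of the estimate controls directly up to a factor $\dt^{-1/2}$, and (ii) the two-case argument \eqref{a priori-3}--\eqref{a priori-4}: if $\dt\ge h^m$ then $\dt^{-1/2}h^m\le h^{m/2}$ and no inverse inequality is needed, while if $\dt\le h^m$ the inverse inequality is applied to $\|\nabla_N e^{n+1}\|_2\le C(\dt^3+h^m)\le Ch^m$, giving $Ch^{m-2}$. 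Without this (or an equivalent device) the induction does not close unconditionally. Finally, the multi-step initialization needs no third-order starter: the scheme is initialized with the exact projections, so $e^0=e^{-1}=e^{-2}=0$.
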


\subsection{The error evolutionary equation}

  For the Fourier projection solution $U_N$ and its interpolation $U$, a careful consistency analysis implies that
\begin{eqnarray}
  &&
  U^{n+1} = {\rm e}^{- \dt L_N} U^n - A \dt^3 \phi_0 (L_N) \Delta_N^2 ( U^{n+1} - U^n ) - \dt \phi_0 (L_N) f_N (U^n)  \nonumber
\\
  &&
  - \dt \phi_1 (L_N) \left( \frac32 f_N (U^n) - 2 f_N (U^{n-1}) + \frac12 f_N (U^{n-2}) \right)  \nonumber
\\
  &&  \qquad
  - \dt \phi_2 (L_N) \left( \frac12 f_N (U^n) - f_N (U^{n-1}) + \frac12 f_N (U^{n-2}) \right) + \dt \tau^n ,
  \label{ETD-3rd-consistency-1}
\end{eqnarray}
with $\| \tau^n \|_{H_h^3} \le C (\dt^3 + h^m)$. In turn, subtracting the numerical scheme \eqref{scheme-ETD-3rd-0} from the consistency estimate \eqref{ETD-3rd-consistency-1} yields
\begin{eqnarray}
  e^{n+1} =& {\rm e}^{- \dt L_N} e^n - A \dt^3 \phi_0 (L_N) \Delta_N^2 ( e^{n+1} - e^n ) - \dt \phi_0 (L_N) \tilde{f}_N (U^n, u^n)  \nonumber
\\
  &
  - \dt \phi_1 (L_N) \Bigl( \frac32 \tilde{f}_N (U^n, u^n) - 2 \tilde{f}_N (U^{n-1}, u^{n-1}) + \frac12 \tilde{f}_N (U^{n-2}, u^{n-2}) \Bigr)  \nonumber
\\
  &
  - \dt \phi_2 (L_N) \Bigl( \frac12 \tilde{f}_N (U^n, u^n) - \tilde{f}_N (U^{n-1}, u^{n-1})
  + \frac12 \tilde{f}_N (U^{n-2}, u^{n-2}) \Bigr)  + \dt \tau^n ,  \label{ETD-3rd-consistency-2}
\end{eqnarray}
with $\tilde{f}_N (U^k, u^k) = f_N (U^k) - f_N (u^k)$, $k \ge 0$.

On the other hand, the current form~\eqref{ETD-3rd-consistency-2} for the numerical error evolution has not revealed a clear interaction between the linear and nonlinear terms. Instead, if we denote $e^{n+1,*}= {\rm e}^{- \dt L_N} e^n$, the error equation \eqref{ETD-3rd-consistency-2} could be rewritten as the following two-stage system, so that the corresponding error analysis could be carried out in a more convenient way:
\begin{eqnarray}
  \frac{e^{n+1,*} - e^n}{\dt} &=& - L_N \phi_0 (L_N)  e^n ,  \label{ETD-3rd-consistency-3-1}
\\
  \frac{e^{n+1} - e^{n+1,*}}{\dt} &=&  - A \dt^2 \phi_0 (L_N) \Delta_N^2 ( e^{n+1} - e^n ) + \tau^n - \phi_0 (L_N) \tilde{f}_N (U^n, u^n)  \nonumber
\\
  &-&
  \phi_1 (L_N) \Bigl( \frac32 \tilde{f}_N (U^n, u^n) - 2 \tilde{f}_N (U^{n-1}, u^{n-1})
  + \frac12 \tilde{f}_N (U^{n-2}, u^{n-2}) \Bigr)  \nonumber
\\
  &-&
  \phi_2 (L_N) \Bigl( \frac12 \tilde{f}_N (U^n, u^n) - \tilde{f}_N (U^{n-1}, u^{n-1}) + \frac12 \tilde{f}_N (U^{n-2}, u^{n-2}) \Bigr) .  \label{ETD-3rd-consistency-3-2}
\end{eqnarray}

\subsection{Some preliminary nonlinear error inequalities}

The following estimate for the nonlinear error term will be needed in later analysis; the detailed proof could be found in Appendix~\ref{proof:Prop 2}.

\begin{prop} \label{prop:prop 2}
Under the assumption that
\begin{equation}
  \| \nabla_N \Delta_N e^k \|_2 \le 1 ,
\label{a priori-0}
\end{equation}
the following inequality is available:
\begin{eqnarray}
  \nrm{ \nabla_N \left( f_N (U^k) - f_N (u^k) \right) }_2  \le C_0^{(2)} \| \nabla_N \Delta_N e^k \|_2 .
  \label{prop 2-1}
\end{eqnarray}
\end{prop}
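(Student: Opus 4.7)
The plan is to start from the identity $f_N(u) = \nabla_N \cdot g_N(u)$ with $g_N(u) = \beta(\nabla_N u) + \kappa \nabla_N u$ and $\beta(\v) = \v/(1+|\v|^2)$. Differencing at $U^k$ and $u^k$ and applying $\nabla_N$ gives
\[
\nabla_N (f_N(U^k) - f_N(u^k)) = \nabla_N \nabla_N \cdot \bigl( \beta(\nabla_N U^k) - \beta(\nabla_N u^k) \bigr) + \kappa \nabla_N \Delta_N e^k,
\]
where the linear contribution is already bounded by $\kappa \| \nabla_N \Delta_N e^k \|_2$, so the remaining work is to estimate the nonlinear divergence-gradient piece in $\| \cdot \|_2$.

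To expand the nonlinearity, I would write $\beta(\nabla_N U^k) - \beta(\nabla_N u^k) = B \, \nabla_N e^k$ with
\[
B = \int_0^1 D\beta \bigl( \theta \nabla_N U^k + (1-\theta) \nabla_N u^k \bigr) \, d\theta .
\]
The matrix $B$ satisfies $|B| \le 1$ pointwise by the estimate underlying Lemma~\ref{lem:vector inequality}, and its first and second discrete differences are polynomials in $\nabla_N U^k$, $\nabla_N u^k$, $\nabla_N^2 U^k$, $\nabla_N^2 u^k$, $\nabla_N^3 U^k$, $\nabla_N^3 u^k$ with coefficients given by derivatives of $\beta$ that are uniformly bounded on $\mathbb{R}^2$. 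Applying the discrete product rule twice to $\nabla_N \nabla_N \cdot (B\, \nabla_N e^k)$ produces three families of terms after rearrangement: (i) a principal piece $B \cdot \nabla_N \Delta_N e^k$, controlled in $\ell^2$ directly by $\| \nabla_N \Delta_N e^k \|_2$; (ii) a mixed piece of the form $(\nabla_N B) \cdot \nabla_N^2 e^k$; (iii) a lowest-order piece of the form $(\nabla_N^2 B) \cdot \nabla_N e^k$.

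For the regularity inputs, the fields $U^k$ inherit uniform $W^{3,\infty}$ bounds from the regularity class $\mathcal{R}$ in~\eqref{assumption:regularity.1}, combined with the projection estimate~\eqref{projection-est-0}, the aliasing control in Lemmas~\ref{lemma:aliasing error-1}--\ref{lemma:aliasing error-2}, and the two-dimensional Sobolev embedding. For the error, the hypothesis~\eqref{a priori-0} together with $\overline{e^k}=0$ and iterated discrete Poincar\'e delivers a uniform $H_h^3$ bound on $e^k$, which in turn yields a uniform $L^\infty$ bound on $\nabla_N e^k$ (via the 2D embedding $H_h^2 \hookrightarrow L^\infty$) and uniform $\ell^p$ bounds on $\nabla_N^2 e^k$ for every finite $p$. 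Writing $u^k = U^k - e^k$ transfers these estimates to $\nabla_N u^k$ and $\nabla_N^2 u^k$. The main obstacle is the borderline failure of $H_h^3 \hookrightarrow W_h^{2,\infty}$ in 2D, so $\nabla_N^2 u^k$ is not available in $L^\infty$; I would resolve this by applying H\"older with mixed exponents ($\ell^4 \cdot \ell^4$ in place of $\ell^\infty \cdot \ell^2$) on the factors in term (ii), relying on the discrete Gagliardo--Nirenberg inequality $\|\nabla_N^2 e^k\|_4 \le C \|\nabla_N^2 e^k\|_2^{1/2} \|\nabla_N^3 e^k\|_2^{1/2}$ together with the Fourier-symbol bound $\|\nabla_N^3 e^k\|_2 \le C \|\nabla_N \Delta_N e^k\|_2$. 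Assembling (i)--(iii) and the linear contribution yields~\eqref{prop 2-1} with a constant $C_0^{(2)}$ depending only on $\kappa$, $\Omega$, and fixed regularity norms of $u_e$.
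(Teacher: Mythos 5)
There is a genuine gap at the center of your argument: the ``discrete product rule'' you invoke does not exist for the operators $\nabla_N$, $\Delta_N$ used in this paper. These are Fourier collocation (pseudo-spectral) derivatives, not finite differences, and for grid functions $B$ and $v$ one has $\nabla_N (B v) \ne (\nabla_N B) v + B (\nabla_N v)$ in general: the pointwise product corresponds to a trigonometric polynomial of degree up to $2K$ whose high modes are aliased back into $\mathcal{P}_K$ before differentiation. Consequently your decomposition of $\nabla_N \nabla_N \cdot (B\, \nabla_N e^k)$ into the three families (i)--(iii) is not an identity; it omits aliasing error terms, and controlling exactly those terms is the main technical content of the proposition. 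Matters are worse here because $B$ involves the rational expression $1/(1+|\nabla u|^2)$, so its continuous extension is not in $\mathcal{P}_{mK}$ for any $m$, which is why the paper must invoke the interpolation estimate of Lemma~\ref{lemma:aliasing error-2} rather than the polynomial aliasing bound of Lemma~\ref{lemma:aliasing error-1}. The paper's route is: write $\nabla_N(f_N(U^k)-f_N(u^k)) = \nabla_N\nabla_N\cdot(g_N(U^k)-g_N(u^k))$, observe that $g_N(U^k)-g_N(u^k)$ is the grid interpolant of the smooth periodic function $g(U_N^k)-g(u_N^k)$, and use Lemma~\ref{lemma:aliasing error-2} (with $k=m=2$) to pass to $\left\| g(U_N^k)-g(u_N^k) \right\|_{H^2}$ \emph{before} any Leibniz manipulation; only then, on the continuous side where the product rule is valid, does it expand the difference and apply H\"older and Sobolev estimates. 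Your proposal never makes this transfer, so the core computation is unjustified.

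The remaining ingredients of your argument are essentially sound and parallel the paper's: the splitting off of the exact linear piece $\kappa\,\nabla_N\Delta_N e^k$ (valid, since Fourier multipliers commute and no products are involved), the bound $|B|\le 1$ from the Lipschitz estimate of Lemma~\ref{lem:vector inequality}, the use of \eqref{a priori-0} plus $\overline{e^k}=0$ to get a uniform $H^3$ bound on $e_N^k$ and hence on $u_N^k$, and the mixed-exponent H\"older / Gagliardo--Nirenberg treatment of the borderline second-derivative factors (the paper compresses this into ``a repeated application of H\"older inequality and Sobolev inequality'' in \eqref{prop 2-7-1}--\eqref{prop 2-7-2}). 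To repair the proof you should carry out your expansion for the continuous extensions $U_N^k$, $u_N^k$, obtain the $H^2$ bound there, and then insert the aliasing control $\nrm{P_c^N \varphi}_{H^2} \le C \nrm{\varphi}_{H^2}$ to return to the discrete norm; as written, the step from the discrete nonlinear term to the terms (i)--(iii) would fail.
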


\subsection{The $\ell^\infty (0,T; H_h^1) \cap \ell^2 (0,T; H_h^3)$ error estimate}

Now we get back to the error evolutionary system (\ref{ETD-3rd-consistency-3-1})-(\ref{ETD-3rd-consistency-3-2}). By applying the linear operator ${\cal G}_N = (\phi_0 (L_N) )^{-1} $ to both equations, we get
\begin{eqnarray}
  &&
  \frac{{\cal G}_N ( e^{n+1,*} - e^n )}{\dt} = - L_N  e^n ,  \label{ETD-3rd-consistency-4-1}
\\
  &&
  \frac{{\cal G}_N ( e^{n+1} - e^{n+1,*} ) }{\dt} =  - A \dt^2 \Delta_N^2 ( e^{n+1} - e^n ) +  {\cal G}_N \tau^n - \tilde{f}_N (U^n, u^n)  \nonumber
\\
  &&  \qquad
   - G^{(1)}_N  \Bigl( \frac32 \tilde{f}_N (U^n, u^n) - 2 \tilde{f}_N (U^{n-1}, u^{n-1})
  + \frac12 \tilde{f}_N (U^{n-2}, u^{n-2}) \Bigr)  \nonumber
\\
  &&  \qquad
  - G^{(2)}_N \Bigl( \frac12 \tilde{f}_N (U^n, u^n) - \tilde{f}_N (U^{n-1}, u^{n-1}) + \frac12 \tilde{f}_N (U^{n-2}, u^{n-2}) \Bigr) .  \label{ETD-3rd-consistency-4-2}
\end{eqnarray}
Since the nonlinear error inequality~\eqref{prop 2-1} is based on the a-priori assumption~\eqref{a priori-0}, we have to make such an assumption for any $k \le n$. This assumption will be recovered by the convergence estimate at the next time step.

Taking a discrete $L^2$ inner product with (\ref{ETD-3rd-consistency-4-1}) by $- \Delta_N (e^{n+1,*} + e^n)$ leads to
\begin{eqnarray}
   \left\langle {\cal G}_N ( e^{n+1,*} - e^n ) , - \Delta_N (e^{n+1,*} + e^n) \right\rangle
   + \dt \left\langle L_N  e^n , - \Delta_N (e^{n+1,*} + e^n) \right\rangle = 0 .
   \label{convergence-H1-1}
\end{eqnarray}
The first term could be analyzed with the help of the summation by parts identity \eqref{Fourier-4}:
\begin{eqnarray}
  &&
   \left\langle {\cal G}_N ( e^{n+1,*} - e^n ) , - \Delta_N (e^{n+1,*} + e^n) \right\rangle
   = \left\langle \nabla_N {\cal G}_N ( e^{n+1,*} - e^n ) , \nabla_N (e^{n+1,*} + e^n) \right\rangle \nonumber
\\
  &=&
    \left\langle \nabla_N {\cal G}^{(0)}_N ( e^{n+1,*} - e^n ) , \nabla_N {\cal G}^{(0)}_N (e^{n+1,*} + e^n) \right\rangle
  = \|  \nabla_N {\cal G}^{(0)}_N e^{n+1,*} \|_2^2
   - \|  \nabla_N {\cal G}^{(0)}_N e^n \|_2^2 .  \label{convergence-H1-2}
\end{eqnarray}
For the second term appearing in (\ref{convergence-H1-1}), we begin with the following observation:
\begin{eqnarray}
  &&
     \left\langle L_N  e^n , - \Delta_N e^{n+1,*}  \right\rangle
     = \left\langle L_N  e^n , - \Delta_N {\rm e}^{- L_N \dt} e^n  \right\rangle \ge 0 ,
     \label{convergence-H1-3}
\end{eqnarray}
in which the last step is based the second inequality of~\eqref{prop-1-0-1-2}. The other part could be analyzed in a more straightforward way:
\begin{eqnarray}
  \left\langle L_N  e^n , - \Delta_N e^n  \right\rangle
     = \left\langle ( \varepsilon^2 \Delta_N^2 - \kappa \Delta_N ) e^n , - \Delta_N e^n  \right\rangle  = \varepsilon^2 \| \nabla_N \Delta_N e^n \|_2^2 + \kappa \| \Delta_N  e^n \|_2^2 .  \label{convergence-H1-4}
\end{eqnarray}
In turn, a combination of (\ref{convergence-H1-1})-(\ref{convergence-H1-4}) results in
\begin{eqnarray}
  \|  \nabla_N {\cal G}^{(0)}_N e^{n+1,*} \|_2^2
   - \| \nabla_N {\cal G}^{(0)}_N e^n \|_2^2
   +  \varepsilon^2 \dt \| \nabla_N \Delta_N e^n \|_2^2 + \kappa \dt \| \Delta_N  e^n \|_2^2
   \le 0 . \label{convergence-H1-5}
\end{eqnarray}

Taking a discrete $L^2$ inner product with (\ref{ETD-3rd-consistency-4-2}) by $- 2 \Delta_N e^{n+1}$ yields
\begin{eqnarray}
  &&
   \left\langle {\cal G}_N ( e^{n+1} - e^{n+1,*} ) , - 2 \Delta_N e^{n+1} \right\rangle  \nonumber
\\
  &=&
    2 A \dt^3 \left\langle \Delta_N^2 (e^{n+1} - e^n) ,  \Delta_N e^{n+1}  \right\rangle
    + 2 \dt \left\langle {\cal G}_N \tau^n ,  - \Delta_N e^{n+1} \right\rangle
     + 2 \dt \left\langle \tilde{f}_N (U^n, u^n) , \Delta_N e^{n+1} \right\rangle \nonumber
\\
  &&
  + \dt \left\langle G_N^{(1)} ( 3 \tilde{f}_N (U^n, u^n) - 4 \tilde{f}_N (U^{n-1}, u^{n-1})
  + \tilde{f}_N (U^{n-2}, u^{n-2}) ) , \Delta_N e^{n+1} \right\rangle  \nonumber
\\
  &&
  + \dt \left\langle G_N^{(2)} ( \tilde{f}_N (U^n, u^n) - 2 \tilde{f}_N (U^{n-1}, u^{n-1})
  + \tilde{f}_N (U^{n-2}, u^{n-2}) ) , \Delta_N e^{n+1} \right\rangle .
   \label{convergence-H1-6}
\end{eqnarray}
The term on the left hand side could be analyzed in a similar way as (\ref{convergence-H1-2}):
\begin{eqnarray}
   \left\langle {\cal G}_N ( e^{n+1} - e^{n+1,*} ) , - 2 \Delta_N e^{n+1} \right\rangle
  &=&
    2 \left\langle \nabla_N {\cal G}^{(0)}_N ( e^{n+1} - e^{n+1,*} ) , \nabla_N {\cal G}^{(0)}_N e^{n+1} \right\rangle   \nonumber
\\
  &\ge&
  \|  \nabla_N {\cal G}^{(0)}_N e^{n+1} \|_2^2
   - \| \nabla_N {\cal G}^{(0)}_N e^{n+1,*} \|_2^2 .  \label{convergence-H1-7}
\end{eqnarray}
The first term on the right hand side turns out to be
\begin{eqnarray}
  2 \left\langle \Delta_N^2 (e^{n+1} - e^n) ,  \Delta_N e^{n+1}  \right\rangle
  &=& -2  \left\langle \nabla_N \Delta_N (e^{n+1} - e^n) ,  \nabla_N \Delta_N e^{n+1}  \right\rangle  \nonumber
\\
  &\le&
  - ( \|  \nabla_N \Delta_N e^{n+1} \|_2^2 - \| \nabla_N \Delta_N e^n \|_2^2 ) .
  \label{convergence-H1-8-1}
\end{eqnarray}
The bound for the truncation error term could be obtained as follows:
\begin{eqnarray}
   - \left\langle {\cal G}_N \tau^n ,  \Delta_N e^{n+1} \right\rangle
   &=&  \left\langle \nabla_N {\cal G}^{(0)}_N \tau^n ,  \nabla_N {\cal G}^{(0)}_N e^{n+1} \right\rangle  \nonumber
\\
  &\le&
   \frac12 (  \| \nabla_N {\cal G}^{(0)}_N \tau^n \|_2^2 + \| \nabla_N {\cal G}^{(0)}_N e^{n+1} \|_2^2 ) .   \label{convergence-H1-8-2}
\end{eqnarray}
For the first nonlinear inner product term, the following estimate could be derived:
\begin{eqnarray}
  &&
  \langle \tilde{f}_N (U^n, u^n) , \Delta_N e^{n+1} \rangle
  =  - \langle \nabla_N \left( f_N (U^n) - f_N (u^n) \right) , \nabla_N e^{n+1} \rangle
  \nonumber
\\
  &\le&
  \nrm{ \nabla_N \left( f_N (U^n) - f_N (u^n) \right) }_2
  \cdot  \| \nabla_N e^{n+1} \|_2
  \le C_0^{(2)} \nrm{ \nabla_N \Delta_N e^n }_2
  \cdot \| \nabla_N {\cal G}^{(0)}_N e^{n+1} \|_2  \nonumber
\\
  &\le&
    \frac{1}{16} \varepsilon^2 \nrm{ \nabla_N \Delta_N e^n }_2^2
  +  4 (C_0^{(2)})^2 \varepsilon^{-2} \| \nabla_N {\cal G}^{(0)}_N e^{n+1} \|_2^2  ,
\label{convergence-H1-9}
\end{eqnarray}
in which Propositions \ref{prop:prop 1}, \ref{prop:prop 2} have been applied in the third step. The nonlinear inner product involving $G_N^{(1)}$ could be handled as follows:
\begin{eqnarray}
   &&
  3 \langle G_N^{(1)} \tilde{f}_N (U^n, u^n) , \Delta_N e^{n+1} \rangle
  =  - 3 \langle G_N^{(1)} \nabla_N \left( f_N (U^n) - f_N (u^n) \right) , \nabla_N e^{n+1} \rangle
  \nonumber
\\
  &\le&
  3 \nrm{ G_N^{(1)} \nabla_N \left( f_N (U^n) - f_N (u^n) \right) }_2
  \cdot  \| \nabla_N e^{n+1} \|_2  \nonumber
\\
  &\le&
   3 C_4 \nrm{ \nabla_N \left( f_N (U^n) - f_N (u^n) \right) }_2
  \cdot  \nrm{ \nabla_N e^{n+1} }_2
  \le 3 C_0^{(2)} C_4 \nrm{ \nabla_N \Delta_N e^n }_2
  \cdot \| \nabla_N {\cal G}^{(0)}_N e^{n+1} \|_2  \nonumber
\\
  &\le&
    \frac18 \varepsilon^2 \nrm{ \nabla_N \Delta_N e^n }_2^2
  +  18 (C_0^{(2)})^2 C_4^2 \varepsilon^{-2} \| \nabla_N {\cal G}^{(0)}_N e^{n+1} \|_2^2  ,  \label{convergence-H1-10-1}
\end{eqnarray}
with the inequality (\ref{prop-1-0-4}) applied in the third step. The other nonlinear inner product terms could be analyzed in a similar way, and the following results are available:
\begin{eqnarray}
  &&
  - 4 \langle G_N^{(1)} \tilde{f}_N (U^{n-1}, u^{n-1}) , \Delta_N e^{n+1} \rangle
   \nonumber
\\
  &\le&
  \frac18 \varepsilon^2 \nrm{ \nabla_N \Delta_N e^{n-1} }_2^2
  +  32 (C_0^{(2)})^2 C_4^2 \varepsilon^{-2} \| \nabla_N {\cal G}^{(0)}_N e^{n+1} \|_2^2  ,  \label{convergence-H1-10-2}
\\
  &&
   \langle G_N^{(1)} \tilde{f}_N (U^{n-2}, u^{n-2}) , - \Delta_N e^{n+1} \rangle \nonumber
\\
  &\le&
   \frac18 \varepsilon^2 \nrm{ \nabla_N \Delta_N e^{n-2} }_2^2
  +  2 (C_0^{(2)})^2 C_4^2 \varepsilon^{-2} \| \nabla_N {\cal G}^{(0)}_N e^{n+1} \|_2^2  ,  \label{convergence-H1-10-3}
\\
  &&
  \langle G_N^{(2)} \tilde{f}_N (U^n, u^n) , \Delta_N e^{n+1} \rangle
  \le
 \frac18 \varepsilon^2 \nrm{ \nabla_N \Delta_N e^n }_2^2
  +  2 (C_0^{(2)})^2 C_5^2 \varepsilon^{-2} \| \nabla_N {\cal G}^{(0)}_N e^{n+1} \|_2^2  ,  \label{convergence-H1-10-4}
\\
  &&
  -2 \langle G_N^{(2)} \tilde{f}_N (U^{n-1}, u^{n-1}) , \Delta_N e^{n+1} \rangle
   \nonumber
 \\
   &\le&
   \frac18 \varepsilon^2 \nrm{ \nabla_N \Delta_N e^{n-1} }_2^2
  +  8 (C_0^{(2)})^2 C_5^2 \varepsilon^{-2} \| \nabla_N {\cal G}^{(0)}_N e^{n+1} \|_2^2  .  \label{convergence-H1-10-5}
\\
  &&
   \langle G_N^{(2)} \tilde{f}_N (U^{n-1}, u^{n-1}) , \Delta_N e^{n+1} \rangle  \nonumber
\\
  &\le&
  \frac18 \varepsilon^2 \nrm{ \nabla_N \Delta_N e^{n-2} }_2^2
  +  2 (C_0^{(2)})^2 C_5^2 \varepsilon^{-2} \| \nabla_N {\cal G}^{(0)}_N e^{n+1} \|_2^2  .  \label{convergence-H1-10-6}
\end{eqnarray}
We notice that all these estimates have to be based on the a-priori assumption~\eqref{a priori-0}, combined with the nonlinear error inequality~\eqref{prop 2-1} in Proposition~\ref{prop:prop 2}. In turn, a substitution of (\ref{convergence-H1-7})-(\ref{convergence-H1-10-6}) into (\ref{convergence-H1-6}) results in
\begin{eqnarray}
  &&
    \|  \nabla_N {\cal G}^{(0)}_N e^{n+1} \|_2^2
   - \| \nabla_N {\cal G}^{(0)}_N e^{n+1,*} \|_2^2
   + A \dt^3 ( \|  \nabla_N \Delta_N e^{n+1} \|_2^2 - \| \nabla_N \Delta_N e^n \|_2^2 )      \nonumber
\\
   &\le&  \frac14 \varepsilon^2 \dt ( \nrm{ \nabla_N \Delta_N e^{n-1} }_2^2
   + \nrm{ \nabla_N \Delta_N e^{n-2} }_2^2 )
   - \frac38 \varepsilon^2 \dt \nrm{ \nabla_N \Delta_N e^n }_2^2
   +  \dt \| \nabla_N {\cal G}^{(0)}_N \tau^n \|_2^2  \nonumber
\\
  &&
  +  \left( 72 (C_0^{(2)})^2 (C_4^2 + C_5^2) \varepsilon^{-2} + 1 \right)
  \dt \| \nabla_N {\cal G}^{(0)}_N e^{n+1} \|_2^2 .  \label{convergence-H1-11}
\end{eqnarray}
Its combination with (\ref{convergence-H1-5}) yields
\begin{eqnarray}
  &&
    \| \nabla_N {\cal G}^{(0)}_N e^{n+1} \|_2^2
   - \|  \nabla_N {\cal G}^{(0)}_N e^n \|_2^2
   + A \dt^2 ( \|  \nabla_N \Delta_N e^{n+1} \|_2^2 - \| \nabla_N \Delta_N e^n \|_2^2 )
   \nonumber
 \\
   &&
   +  \frac58 \varepsilon^2 \dt \nrm{ \nabla_N \Delta_N e^n }_2^2  \nonumber
\\
  &\le&
  \frac14 \varepsilon^2 \dt ( \| \nabla_N \Delta_N e^{n-1} \|_2^2
   + \| \nabla_N \Delta_N e^{n-2} \|_2^2 )
   + \dt \| \nabla_N {\cal G}^{(0)}_N \tau^n \|_2^2   \nonumber
\\
  &&
  +  \left( 72 (C_0^{(2)})^2 (C_4^2 + C_5^2)  \varepsilon^{-2} + 1 \right)
  \dt \| \nabla_N {\cal G}^{(0)}_N e^{n+1} \|_2^2 .  \label{convergence-H1-12}
\end{eqnarray}
In turn, an application of discrete Gronwall inequality results in the desired convergence estimate:
\begin{eqnarray}
   \| \nabla_N {\cal G}^{(0)}_N e^{n+1} \|_2  + \Bigl( \frac18 \varepsilon^2 \dt   \sum_{m=1}^{n+1} \| \nabla_N \Delta_N e^m \|_2^2 \Bigr)^{1/2} \le C^{**} ( \dt^3 + h^m) .
   \label{convergence-H1-13}
\end{eqnarray}
In addition, by the preliminary estimate (\ref{prop-1-0-3}) (in Proposition~\ref{prop:prop 1}), we obtain the $\ell^\infty (0,T; H^1) \cap \ell^2 (0,T; H^3)$ error estimate:
\begin{eqnarray}
   \nrm{ \nabla_N e^{n+1} }_2  + \Bigl( \varepsilon^2 \dt   \sum_{m=1}^{n+1} \| \nabla_N \Delta_N e^m \|_2^2 \Bigr)^{1/2} \le \hat{C} ( \dt^3 + h^m) ,  \quad
   \mbox{with $\hat{C} = 2 \sqrt{2} C^{**}$} .
   \label{convergence-H1-14}
\end{eqnarray}

  Finally, we have to recover the a-priori assumption~\eqref{a priori-0} at time instant $t^{n+1}$, so that the analysis could be carried out in the induction style. The convergence estimate~\eqref{convergence-H1-14} indicates that
\begin{eqnarray}
    \| \nabla_N \Delta_N e^{n+1} \|_2  \le  \frac{\hat{C} \varepsilon^{-1} ( \dt^3 + h^m)}{\dt^{1/2}}   \le \hat{C} \varepsilon^{-1} ( \dt^{5/2} + \dt^{-1/2} h^m ) .  \label{a priori-2}
\end{eqnarray}
Since a singular $\dt^{-1}$ term appears on the right hand size, a scaling relation between $\dt$ and $h$ is needed in further analysis. If $\dt \ge h^m$ (which is a very relaxed condition), the following inequality is valid:
\begin{eqnarray}
    \| \nabla_N \Delta_N e^{n+1} \|_2  \le \hat{C} \varepsilon^{-1} ( \dt^{5/2} + h^{m/2} ) , \quad \mbox{if $\dt \ge h^m$}.  \label{a priori-3}
\end{eqnarray}
Otherwise, if $\dt \le h^m$, we apply an inverse inequality and obtain
\begin{eqnarray}
  \| \nabla_N \Delta_N e^{n+1} \|_2  &\le& \frac{ C \| \nabla_N e^{n+1} \|_2 }{h^2}
  \le \frac{ C \hat{C} (\dt^3 + h^m) }{h^2}  \le \frac{ C \hat{C} ( h^{3m} + h^m) }{h^2}  \nonumber
\\
  &\le&
  C \hat{C}  h^{m-2}  ,  \quad \mbox{if $\dt \le h^m$} .  \label{a priori-4}
\end{eqnarray}
Therefore, a combination of~\eqref{a priori-3} and \eqref{a priori-4} reveals that
\begin{eqnarray}
    \| \nabla_N \Delta_N e^{n+1} \|_2  \le \max \Bigl( \hat{C} \varepsilon^{-1} ( \dt^{5/2} + h^{m/2} ) ,  C \hat{C}  h^{m-2}  \Bigr) ,  \quad \mbox{for any $\dt$ and $h$} .  \label{a priori-5}
\end{eqnarray}
Consequently, the a-priori assumption~\eqref{a priori-0} could be recovered at time instant $t^{n+1}$ under the following constraint:
\begin{eqnarray}
  \dt \le (2 \hat{C})^{-2/5} \varepsilon^{2/5} ,\quad h \le (2 \hat{C})^{-2/m} \varepsilon^{2/m} .  \label{a priori-6}
\end{eqnarray}
We notice that the constraints for $\dt$ and $h$ are independent, and no scaling law between $\dt$ and $h$ is needed to pass through the analysis. In turn, this convergence estimate is unconditional. This validates the convergence estimate~\eqref{convergence-0-2}, and the proof for Theorem~\ref{thm:convergence} has been finished.


\section{Numerical results} \label{sec:numerical results}

\subsection{Convergence test for the numerical scheme}

In this subsection we perform a numerical accuracy check for the third order accurate ETD-based scheme~\eqref{scheme-ETD-3rd-0}-\eqref{scheme-ETD-3rd-1}. The computational domain is set to be $\Omega = (0,1)^2$, and the exact profile for the phase variable is set to be
	\begin{equation}
U (x,y,t) = \sin( 2 \pi x) \cos(2 \pi y) \cos( t) .
	\label{AC-1}
	\end{equation}
To make $U$ satisfy the original PDE \eqref{equation-NSS}, we have to add an artificial, time-dependent forcing term. Then the proposed third order numerical  scheme~\eqref{scheme-ETD-3rd-0}-\eqref{scheme-ETD-3rd-1} can be implemented to solve for (\ref{equation-NSS}).   We compute solutions with grid sizes $N=64$ to $N=192$ in increments of 16, and we solve up to time $T=1$.  The errors are reported at this final time.  
The surface diffusion parameter is taken as $\varepsilon=0.5$, the stabilization parameter is taken as $\kappa=\frac18$, and we set the artificial regularization parameter as $A=1$. The time step $\dt$ is determined by the linear refinement path $\dt = 0.5 h$, where $h$ is the spatial grid size. Figure~\ref{fig1} shows the discrete $L^1$, $L^2$ and $L^{\infty}$ norms of the errors between the numerical and exact solutions.  A clear third order accuracy is observed in all the norms.

	\begin{figure}
	\begin{center}
\includegraphics[width=3.0in]{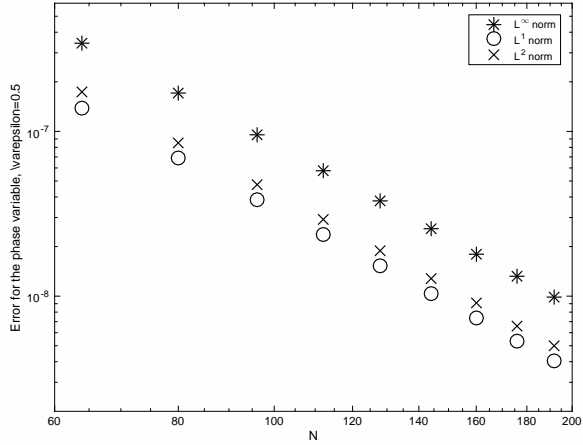}
	\end{center}
\caption{$L^1$, $L^2$ and $L^\infty$ numerical errors at $T=1.0$ plotted versus $N$ for the fully discrete second order scheme \eqref{scheme-ETD-3rd-0}-\eqref{scheme-ETD-3rd-1}.  The surface diffusion parameter is taken to be $\varepsilon=0.5$ and the time step size is $\dt = 0.5 h$.  The data lie roughly on curves $CN^{-3}$, for appropriate choices of $C$, confirming the full third-order accuracy of the scheme.}
	\label{fig1}
	\end{figure}


\subsection{Coarsening, energy dissipation and other physical quantities}

With the assumption that $\varepsilon\ll\min\left\{L_x,L_y\right\}$, how the solution to \eqref{equation-NSS} scales with time has always been of great interests. The physically interesting quantities that may be obtained from the solutions are (i) the energy $E(t)$; (ii) the characteristic (average) height (the surface roughness) $h(t)$;  and (iii) the characteristic (average) slope $m(t)$, the latter two defined precisely  as
	\begin{eqnarray}
h(t) &=&  \sqrt{ \frac{1}{| \Omega|} \int_{\Omega} \Bigl| u ( {\bf x}, t )  - \bar{u} (t) \Bigr|^2 \mathrm{d} {\bf x} }  \ ,  \quad  \mbox{with} \quad \,  \bar{u} (t) :=  \frac{1}{| \Omega|} \int_{\Omega}  u ( {\bf x}, t )  \mathrm{d} {\bf x} ,
 	\label{standard deviation}
	\\
m(t) &=& \sqrt{ \frac{1}{| \Omega|} \int_{\Omega}  \left| \nabla u ( {\bf x}, t ) \right|^2 \mathrm{d} {\bf x} } .
	\label{mound width}
	\end{eqnarray}
For the no-slope-selection equation~\eqref{equation-NSS}, one obtains $h\sim  O\left(t^{1/2}\right)$, $m(t) \sim O\left(t^{1/4}\right)$, and $E\sim O\left(-\ln(t)\right)$ as $t\to\infty$.  (See~\cite{golubovic97, libo03, libo04} and other related references.)  This implies that the characteristic (average) length $\ell(t) := h(t)/m(t)  \sim O\left( t^{1/4}\right)$ as $t\to\infty$. In other words, the average length and average slope scale the same with increasing time.  We also observe that the average mound height $h(t)$ grows faster than the average length $\ell(t)$, which is expected because there is no preferred slope of the height function $u$.

At a theoretical level, as described in~\cite{kohn06, kohn03, libo04}, one can at best  obtain lower bounds for the energy dissipation and, conversely, upper bounds for the average height.  However, the rates quoted as the upper or lower bounds are typically observed for the averaged values of the quantities of interest.  It is quite challenging to numerically predict these scaling laws, since very long time scale simulations are needed. To capture the full range of coarsening behaviors, numerical simulations for the coarsening process require short-time and long-time accuracy and stability, in addition to high spatial accuracy for small values of $\varepsilon$.	

In this article we display the numerical simulation results obtained from the proposed third order scheme~\eqref{scheme-ETD-3rd-0}-\eqref{scheme-ETD-3rd-1} for the no-slope-selection equation~\eqref{equation-NSS}, and compare the computed solutions against the predicted coarsening rates.  Similar results have also been reported for many first and second order accurate numerical schemes in the existing literature, such as the ones given by~\cite{chen12, Ju17, wang10}, etc. The surface diffusion coefficient parameter is taken to be $\varepsilon=0.02$ in this article, and the domain is set as $L=L_x = L_y = 12.8$. The uniform spatial resolution is given by $h = L /N$, $N=512$, which is adequate to resolve the small structures in the solution with such a value of $\varepsilon$. 

For the temporal step size $\dt$, we use increasing values of $\dt$, namely, $\dt = 0.004$ on the time interval $[0,400]$, $\dt = 0.04$ on the time interval $[400,6000]$, $\dt=0.16$ on the time interval $[6000, 3 \times 10^5]$. Whenever a new time step size is applied, we initiate the two-step numerical scheme by  taking $u^{-1} = u^{-2} = u^0$, with the initial data $u^0$ given by the final time output of the last time period. Figure~\ref{fig3} displays time snapshots of the film height $u$ with $\varepsilon=0.02$, with significant coarsening observed in the system.  At early times many small hills (red) and valleys (blue) are present.  At the final time, $t= 300000$, a one-hill-one-valley structure emerges, and further coarsening is not possible.

\begin{figure}[h]
	\begin{center}
\includegraphics[width=6.0in]{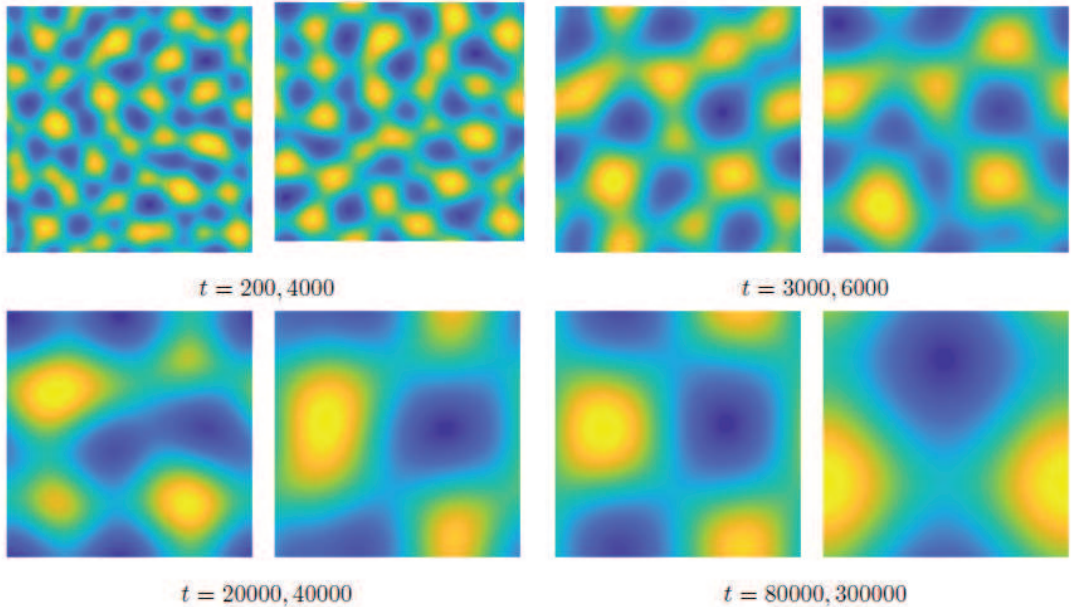}
\caption{(Color online.) Snapshots of the computed height function $u$ at the indicated times for the parameters $L =12.8$, $\varepsilon = 0.02$.  
The hills at early times are not as high as time at later times, and similarly with the valley. The average height/depth evolution with time could be seen in Figure~\ref{fig4}.}		
		\label{fig3}
	\end{center}
\end{figure}


The long time characteristics of the solution, especially the energy decay rate, average height growth rate, and the mound width growth rate, are of interest to surface physics community.  The last two quantities can be easily measured experimentally. On the other hand, the discrete energy $E_N$ is defined via~\eqref{energy-discrete-spectral}; the space-continuous average height and average slope have been defined in \eqref{standard deviation}, \eqref{mound width}, and the analogous discrete versions are also available. 
Theoretically speaking, the lower bound for the energy decay rate is of the order of $- \ln(t)$, and the upper bounds for the average height  and average slope/average length are of the order of $t^{1/2}$, $t^{1/4}$, respectively, as established for the no-slope-selection equation~\eqref{equation-NSS} in~\cite{libo04}.  Figure~\ref{fig4} presents the semi-log plots for the energy versus time and log-log plots for the average height versus time, and average slope versus time, respectively, with the given physical parameter $\varepsilon=0.02$. The detailed scaling ``exponents" are obtained using least squares fits of the computed data up to time $t=400$.  A clear observation of the $- \ln(t)$, $t^{1/2}$ and $t^{1/4}$ scaling laws can be made, with different coefficients dependent upon $\varepsilon$, or, equivalently, the domain size, $L$.

	\begin{figure}
	\centering
        \hbox{
	\includegraphics[width=2.0in,height=2.0in]{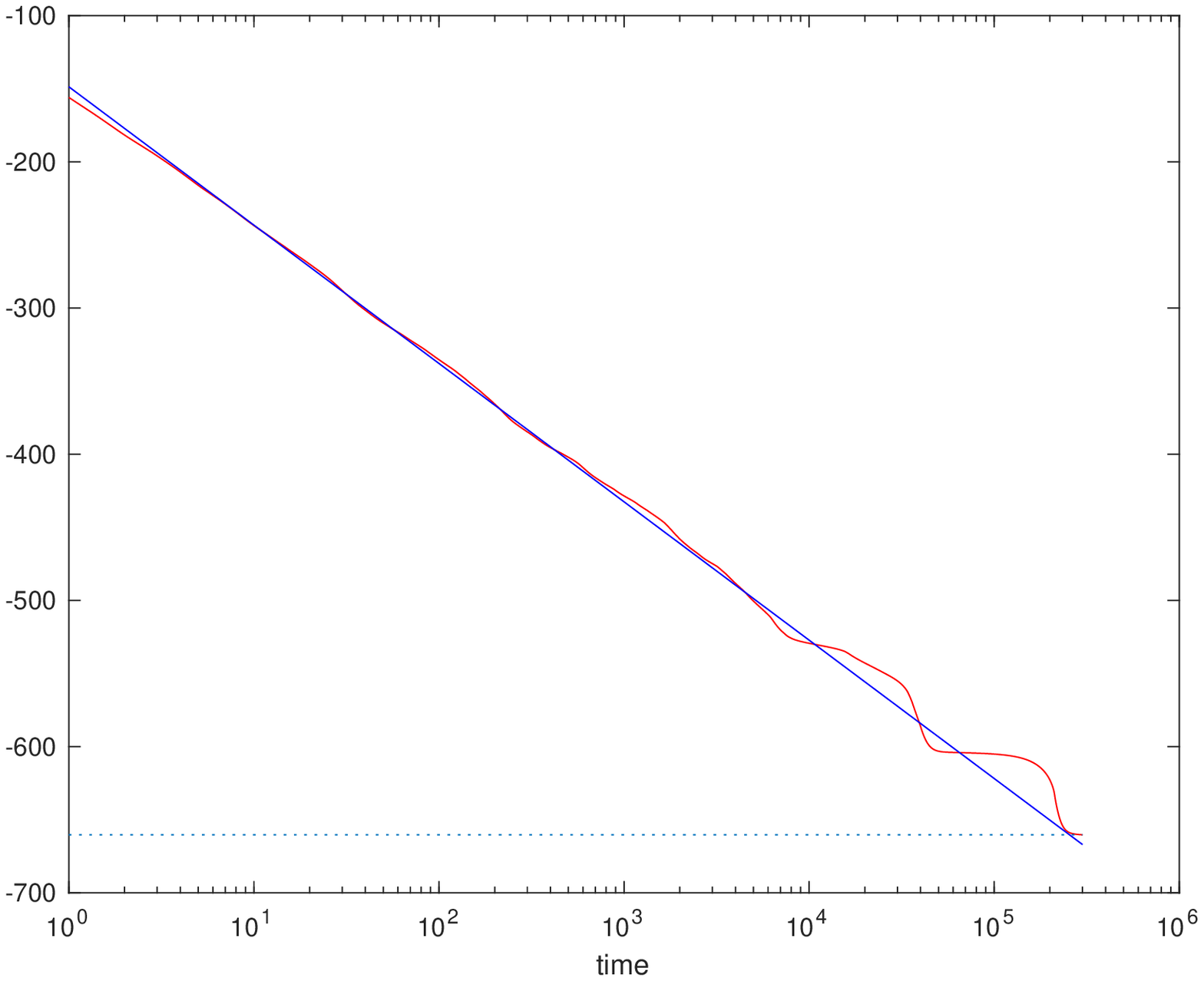}  \hskip 0.2cm
\includegraphics[height=2.0in,width=2.0in]{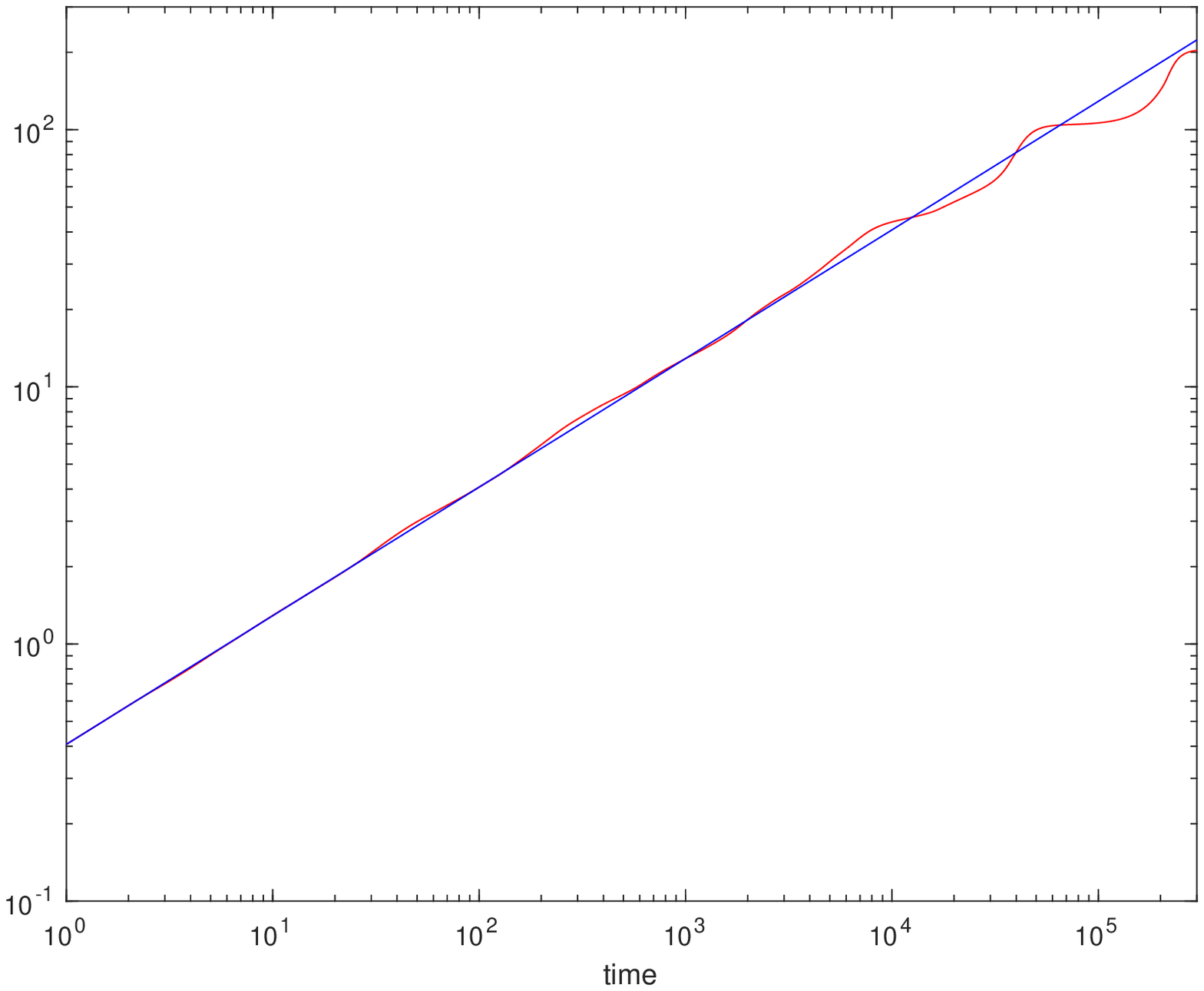} \hskip 0.2cm
\includegraphics[height=2.0in,width=2.0in]{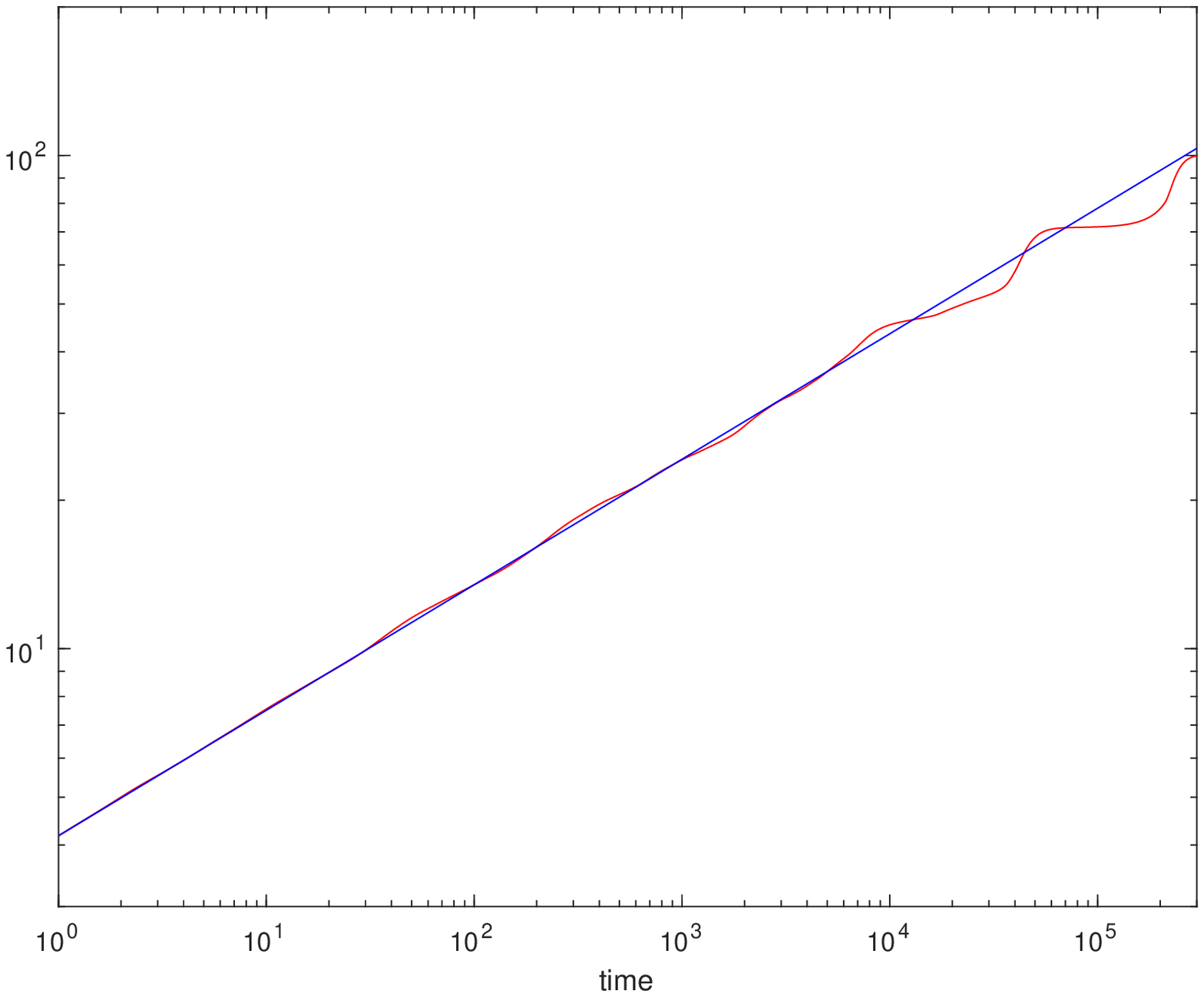} }
\caption{Left: Semi-log plot of the temporal evolution the energy $E_N$ for $\varepsilon=0.02$.  The energy decreases like $-\ln(t)$ until saturation. Middle: The log-log plot of the average height (or roughness) of $u$, denoted as $h(t)$, which grows like $t^{1/2}$. Right: The log-log plot of the average width of $u$, denoted $m(t)$, which grows like $t^{1/4}$. The dotted lines correspond to the minimum energy reached by the numerical simulation. The red lines represent the energy plot obtained by the simulations, while the straight lines are obtained by least squares approximations to the energy data.  The least squares fit is only taken for the linear part of the calculated data, only up to about time $t=400$.  The fitted line for the energy has the form $a_e\ln(t)+b_e$, with $a_e = -41.0983$, $b_e=-148.6410$; the (blue) fitting line for the average height has the form $a_ht^{b_h}$, with  $a_h =   0.4071$, $b_h = 0.5001$, and the fitting line for the average width has the form $a_m t^{b_m}$, with $a_m = 4.1747$, $b_m = 0.2545$.}
	\label{fig4}
       \end{figure}

Now we recall that a lower bound for the energy (\ref{energy-NSS}), assuming $\Omega = (0,L)^2$, which has been derived and polished in our earlier works~\cite{chen12, chen14, wang10}:
	\begin{equation}
E (\phi) \ge \frac{L^2}{2}\left( \ln\left(\frac{4 \varepsilon^2\pi^2}{L^2}\right)-\frac{4\varepsilon^2\pi^2}{L^2}+1\right) =:\gamma  \ .
	\label{lower-bound}
	\end{equation}
Obviously, since the energy is bounded below it cannot keep decreasing at the rate $-\ln(t)$.  This fact manifests itself in the calculated data as the rate of decrease of the energy, for example, begins to wildly deviate from the predicted $-\ln(t)$ curve.  Sometimes the rate of decrease increases, and sometimes it slows as the systems ``feels" the periodic boundary conditions.  Interestedly, regardless of this later-time deviation from the accepted rates, the time at which the system saturates (\emph{i.e.}, the time when the energy abruptly and essentially stops decreasing) is roughly that predicted by extending the blue lines in Figure~\ref{fig4} to the predicted minimum energy~\eqref{lower-bound}.

	\begin{rem}
In this presented numerical simulation, the spatial resolution and time step sizes are taken as the same as the ones presented for the second order energy stable scheme~\cite{chen14}. Meanwhile, since a linear iteration algorithm has to be applied for the highly nonlinear numerical scheme in~\cite{chen14}, the computational cost at each time step is about 3 to 5 times as the proposed 3rd order ETD-based one. 
For the long time simulation, both numerical schemes have produced similar evolutionary curves in terms of energy, standard deviation, and the mound width. A more detailed calculation shows that long time asymptotic growth rate of the standard deviation given by the third order numerical simulation is closer to $t^{1/2}$ than that by the second order energy stable scheme: $m_r = 0.5001$, as recorded in Figure~\ref{fig4}, while in~\cite{chen14} this exponent was found to be $m_r = 0.5132$. Similarly, the long time asymptotic growth rate of the mound width given by~\eqref{scheme-ETD-3rd-0}-\eqref{scheme-ETD-3rd-1} is closer to $t^{1/4}$ than that by the second order energy stable scheme in: $b_m = 0.2545$, as recorded in Figure~\ref{fig4}, in comparison with $m_r = 0.2607$ reported in~\cite{chen14}. This gives more evidence that the third order scheme is able to produce more accurate long time numerical simulation results than the second order schemes, even if the computational cost is even less than the one given by~\cite{chen14}, due to the linear iteration algorithm to implement the nonlinear numerical scheme.

Similar comparison has also been made between the second order ETD-related scheme and the proposed third order ETD-based scheme, given by~\eqref{scheme-ETD-2nd} (outlined in~\cite{Ju17}) and ~\eqref{scheme-ETD-3rd-0}-\eqref{scheme-ETD-3rd-1}, respectively: $m_r = 0.5001$, $b_m=0.2545$ for the proposed third order scheme, in comparison with $m_r=0.510$, $b_m=0.258$, for $ETDMs2$, as reported in~\cite{Ju17}. This gives another evidence of robustness of the third order accurate numerical scheme for the NSS equation~\eqref{equation-NSS}.
	\end{rem}

\section{Concluding remarks} \label{sec:conclusion}

In this article, we propose and analyze a third order accurate ETD-based numerical scheme for the NSS equation~\eqref{equation-NSS} of the epitaxial thin film growth model, combined with Fourier pseudo-spectral spatial discretization. An exact integration of the linear part of the NSS equation is involved in the ETD-based scheme, followed by multi-step explicit approximation of the temporal integral of the nonlinear term. More importantly, a third order accurate Douglas-Dupont regularization term is added in the numerical scheme. In turn, a careful Fourier eigenvalue analysis leads to the energy stability in a modified version, and a uniform in time bound of the numerical energy becomes available. Furthermore, the optimal rate convergence analysis and error estimate are derived in details, with a decomposition of the numerical scheme into two stages. Error estimates are carried out in both stages, with extensive applications of linearized stability analysis in the second stage. To overcome the difficulties associated with many global operators involved in the algorithm, as well as their inverse operators, we have to perform careful eigenvalue estimates for these operators, as well as their composition. In addition, 
an aliasing error control technique has to be utilized in the $\ell^\infty (0, T, H_h^1) \cap \ell^2 (0,T; H_h^3)$ error estimate, combined with extensive scaling law arguments between $\dt$ and $h$. This convergence estimate is the first such result for a third order accurate scheme for a gradient flow. Some numerical simulation results are presented to demonstrate the robustness of the numerical scheme and the third order convergence. In particular, the long time simulation results have revealed that, the power index for the surface roughness and the mound width growth for $\varepsilon=0.02$ (up to $T=3 \times 10^5$), created by the proposed third order ETD-based numerical scheme, is more accurate than these created by certain second order accurate, energy stable schemes in the existing literature.

	\section*{Acknowledgements}
This work is supported in part by the Longshan Talent Project of SWUST 18LZX529  (K. Cheng), Hong Kong Research Council GRF grants 15300417 and 15325816,
(Z. Qiao) and NSF DMS-1418689 (C.~Wang). 

	\appendix

	\section{Proof of Lemma \ref{lem:lem 1}}
	\label{proof:Lemma 1}
	
First, we review the following estimate in Calculus.

\begin{lem}  \label{lem:prelim lem}
Suppose that $f(x)$ and $g(x)$ are continuous functions, $f (x) >0$, $g (x) >0$, and $\frac{f (x)}{g (x)}$ is decreasing over $(0, + \infty)$. Define $H(x) = \frac{\int_0^x f (t) dt}{\int_0^x g(t) dt}$. Then $H(x)$ is decreasing over $(0, + \infty)$.
\end{lem}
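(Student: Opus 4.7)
The plan is to show $H'(x) \le 0$ for all $x > 0$ by a direct computation combined with the assumed monotonicity of $f/g$. First I would differentiate $H$ via the quotient rule, which yields
\begin{equation*}
  H'(x) = \frac{ f(x) \int_0^x g(t)\, dt - g(x) \int_0^x f(t)\, dt }{ \left( \int_0^x g(t)\, dt \right)^2 } .
\end{equation*}
Since $g > 0$ implies the denominator is strictly positive, it suffices to show that the numerator is $\le 0$ for every $x > 0$.

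The key move is to rewrite that numerator as a single integral whose sign is manifest. Namely,
\begin{equation*}
  f(x) \int_0^x g(t)\, dt - g(x) \int_0^x f(t)\, dt = \int_0^x \bigl( f(x) g(t) - g(x) f(t) \bigr)\, dt = \int_0^x g(t) g(x) \left( \frac{f(x)}{g(x)} - \frac{f(t)}{g(t)} \right) dt .
\end{equation*}
For $t \in (0,x)$, the decreasing property of $f/g$ gives $\frac{f(t)}{g(t)} \ge \frac{f(x)}{g(x)}$, and combined with $g(t), g(x) > 0$ the integrand is pointwise $\le 0$. Hence the integral is $\le 0$, which gives $H'(x) \le 0$ and therefore the decreasing property of $H$.

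There is no real obstacle here; the only subtlety is choosing the correct algebraic rearrangement so that the sign of the numerator becomes visibly determined by the monotonicity hypothesis. The argument has the natural interpretation that $H(x)$ is the weighted average of $f/g$ on $(0,x)$ with weight $g(t)\, dt$, and as $x$ grows the average incorporates progressively smaller values of $f/g$, hence decreases. If strict monotonicity of $H$ were needed one would additionally invoke strictness of the decrease of $f/g$ on a set of positive measure, but for the application in Lemma~\ref{lem:lem 1} the weak inequality is sufficient.
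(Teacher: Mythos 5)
Your proof is correct, but it takes a different route from the paper. You differentiate $H$ and show $H'(x)\le 0$ by rewriting the numerator $f(x)\int_0^x g - g(x)\int_0^x f$ as $\int_0^x g(t)g(x)\bigl(\tfrac{f(x)}{g(x)}-\tfrac{f(t)}{g(t)}\bigr)\,dt$, whose integrand is pointwise nonpositive by the monotonicity of $f/g$; this is a clean and standard argument, and continuity of $f$ and $g$ is exactly what you need for the fundamental theorem of calculus to justify the quotient-rule computation. The paper instead avoids differentiation entirely: for $0<x_1<x_2$ it sets $C_1=f(x_1)/g(x_1)$, observes that $f\ge C_1 g$ on $[0,x_1]$ and $f\le C_1 g$ on $[x_1,x_2]$, integrates to get $\tfrac{A_1}{B_1}\ge C_1\ge\tfrac{A_2}{B_2}$ with $A_i,B_i$ the integrals of $f,g$ over the two subintervals, and concludes via the mediant inequality $\tfrac{A_1+A_2}{B_1+B_2}\le\tfrac{A_1}{B_1}$. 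The paper's comparison argument is slightly more elementary (no derivative of $H$ is ever formed) and would survive weakening continuity to mere integrability, while yours is more mechanical and makes the weighted-average interpretation of $H$ transparent; for the application in Lemma~\ref{lem:lem 1} either suffices, since only the weak (nonstrict) decrease is used.
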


\begin{proof}
  Denote $F(x) = \int_0^x f (t) dt$, $G(x) = \int_0^x g (t) dt$, so that $H(x) = \frac{F(x)}{G(x)}$. For any $0 < x_1 < x_2$, we make a comparison between $H(x_1)$ and $H (x_2)$.

  We denote $C_1 = \frac{f(x_1)}{g(x_1)}$. By the decreasing property of $\frac{f (x)}{g (x)}$, we see that $\frac{f (x)}{g (x)} \ge C_1$ for $0\le x \le x_1$, and $\frac{f (x)}{g (x)} \le C_1$ for $x_1 \le x  \le x_2$. This in turn implies that
\begin{equation}
  f (x) \ge C_1 g(x) ,  \, \, \, \mbox{for $0\le x \le x_1$} , \quad
  f (x) \le C_1 g(x) ,  \, \, \, \mbox{for $x_1\le x \le x_2$} .
\end{equation}
As a result, we get
\begin{equation}
  \int_0^{x_1} f (t) dt \ge C_1 \int_0^{x_1} g (t) dt , \quad
  \int_{x_1}^{x_2} f (t) dt \le C_1 \int_{x_1}^{x_2} g (t) dt .
\end{equation}
In turn, if we denote $A_1 =  \int_0^{x_1} f (t) dt$, $B_1 =  \int_0^{x_1} g (t) dt$, $A_2 =  \int_{x_1}^{x_2} f (t) dt$, $B_2 =  \int_{x_1}^{x_2} g (t) dt$, we have
\begin{equation}
  \frac{A_1}{B_1} \ge C_1  \ge \frac{A_2}{B_2} .
\end{equation}
Then we arrive at
\begin{equation}
  H(x_2) = \frac{\int_0^{x_2} f (t) dt}{\int_0^{x_2} g(t) dt} = \frac{A_1 + A_2}{B_1 + B_2} \le \frac{A_1}{B_1} = H(x_1).
\end{equation}
This completes the proof for the decreasing property of $H (x)$.
\end{proof}

Next, we proceed into the proof of Lemma \ref{lem:lem 1}.

\begin{proof}
The function $g_0 (x)$ could be represented as
\begin{eqnarray}
  g_0 (x) = \frac{1 - {\rm e}^{-x}}{x} = \frac{\int_0^x {\rm e}^{-t} \, dt}{\int_0^x 1 \, dt} .
  \label{lem 1-1}
\end{eqnarray}
On the other hand, $\frac{{\rm e}^{-x}}{1} = {\rm e}^{-x}$ is a decreasing function over $(0, +\infty)$. By Lemma~\ref{lem:prelim lem}, we conclude that $g_0 (x)$ is decreasing.

Similarly, $g_1 (x)$ could be rewritten as
\begin{eqnarray}
  g_1 (x) = \frac{x - (1 - {\rm e}^{-x} )}{x^2} = \frac{\int_0^x ( 1 - {\rm e}^{-t} ) \, dt}{\int_0^x 2t \, dt} . \label{lem 1-2}
\end{eqnarray}
Since $\frac{1 - {\rm e}^{-x}}{2 x} = \frac12 g_0 (x)$ is a decreasing function over $(0, +\infty)$, an application of Lemma~\ref{lem:prelim lem} reveals that $g_1 (x)$ is also decreasing.

For $g_2 (x)$, we look at its rewritten form
\begin{eqnarray}
  g_2 (x) = \frac{x^2 - 2 (x - (1 - {\rm e}^{-x} ) )}{x^3}  = \frac{\int_0^x 2 ( t - (1 - {\rm e}^{-t} ) ) \, dt}{\int_0^x 3 t^2 \, dt} . \label{lem 1-3}
\end{eqnarray}
Since $\frac{2 ( x - (1 - {\rm e}^{-x} ) )}{3 x^2} = \frac23 g_1 (x)$ is a decreasing function over $(0, +\infty)$, an application of Lemma~\ref{lem:prelim lem} reveals that $g_2 (x)$ is also decreasing. This finishes the proof of the first part of Lemma~\ref{lem:lem 1}.

As a direct consequence of their decreasing property, we see that $g_0 (x) \le g_0 (0) = 1$, $g_1 (x) \le g_1 (0) = \frac12$ and $g_2 (x) \le g_2 (0) = \frac13$, \, \, $\forall x > 0$.

In turn, we observe that
\begin{eqnarray}
  &&
  \frac{g_1 (x)}{g_0 (x)} \le \frac{g_1 (0)}{g_0 (2)} = \frac{\frac12}{\frac{1 - {\rm e}^{-2}}{2} }  = \frac{1}{1 - {\rm e}^{-2}} ,  \quad \mbox{for $x \le 2$} ,
\\
  &&
  \frac{g_1 (x)}{g_0 (x)} = \frac{1 - \frac{1 - {\rm e}^{-x}}{x} }{1 - {\rm e}^{-x} }
  \le \frac{1}{1 - {\rm e}^{-2} } ,  \quad \mbox{for $x \ge 2$}  .
\end{eqnarray}
For the function $\frac{g_2 (x)}{g_0 (x)}$, we have
\begin{eqnarray}
  &&
  \frac{g_2 (x)}{g_0 (x)} \le \frac{g_2 (0)}{g_0 (2)} = \frac{\frac13}{\frac{1 - {\rm e}^{-2}}{2} }  = \frac{2}{3(1 - {\rm e}^{-2})} ,  \quad \mbox{for $x \le 2$} ,
\\
  &&
  \frac{g_2 (x)}{g_0 (x)} = \frac{1 - 2 \frac{1 - \frac{1 - {\rm e}^{-x}}{x} }{x}  }{1 - {\rm e}^{-x} }  \le \frac{1}{1 - {\rm e}^{-2} } ,  \quad \mbox{for $x \ge 2$}  .
\end{eqnarray}
Therefore, the inequalities $\frac{g_1 (x)}{g_0 (x)} \le \frac{1}{1 - {\rm e}^{-2} }$, $\frac{g_2 (x)}{g_0 (x)} \le \frac{1}{1 - {\rm e}^{-2} }$ are valid. This finishes the proof of Lemma~\ref{lem:lem 1}.
\end{proof}

\section{Proof of Proposition~\ref{prop:prop 1} } \label{proof:Prop 1}

An application of Parseval equality to the discrete Fourier expansions for $f$ and ${\cal G}^{(0)}_N f$, given by~\eqref{Fourier-1} and \eqref{Fourier-3}, respectively, leads to
\begin{eqnarray}
  \| f \|_2^2 = L^2 \sum_{k,\ell=-K}^K  | \hat{f}_{k,\ell} |^2 ,  \quad
  \nrm{ {\cal G}^{(0)}_N f }_2^2 = L^2 \sum_{k,\ell=-K}^K  \frac{\dt \Lambda_{k,\ell}}{1 - {\rm e}^{- \dt \Lambda_{k,\ell}} } | \hat{f}_{k,\ell} |^2 .  \label{prop-1-1}
\end{eqnarray}
Meanwhile, the following observation is made:
\begin{eqnarray}
  \frac{\dt \Lambda_{k,\ell}}{1 - {\rm e}^{- \dt \Lambda_{k,\ell}} }  = \frac{1}{g_0 (\dt \Lambda_{k,\ell})} . \label{prop-1-2-1}
\end{eqnarray}
With an application of Lemma~\ref{lem:lem 1}, we obtain
\begin{eqnarray}
   1 = \frac{1}{g_0 (0)} \le \frac{1}{g_0 (x) } = \frac{x}{ 1 - {\rm e}^{-x} }  \le 1+x ,  \quad \forall x > 0 . \label{prop-1-2-2}
\end{eqnarray}
This in turn implies that
\begin{eqnarray}
  1 \le \frac{\dt \Lambda_{k,\ell}}{1 - {\rm e}^{- \dt \Lambda_{k,\ell}} }  \le 1+ \dt \Lambda_{k,\ell} ,  \quad \mbox{for any $k$, $\ell$} . \label{prop-1-3}
\end{eqnarray}
Its combination with~\eqref{prop-1-1} reveals that
\begin{eqnarray}
  \| f \|_2^2 \le \nrm{ {\cal G}^{(0)}_N f }_2^2 \le L^2 \sum_{k,\ell=-N}^N ( 1+ \dt \Lambda_{k,\ell})  | \hat{f}_{k,\ell} |^2 = \| f \|_2^2 + \dt ( \varepsilon^2 \| \Delta_N f \|_2^2 + \kappa \| \nabla_N f \|_2^2 ) ,  \label{prop-1-4}
\end{eqnarray}
which in turn results in~\eqref{prop-1-0-1}, by taking $C_1 = 1$.

  The proof of the first inequality of~\eqref{prop-1-0-1-2} follows a similar form of Fourier analysis, combined with the following identity:
\begin{eqnarray}
  \frac{\dt \Lambda_{k,\ell}}{1 - {\rm e}^{- \dt \Lambda_{k,\ell}} }  \ge \dt \Lambda_{k,\ell} ,  \quad \forall k, \ell .
\end{eqnarray}
For the second inequality of~\eqref{prop-1-0-1-2}, we begin with the following identities
\begin{eqnarray}
  &&
  (L_N  f)_{i,j} = \sum_{k,\ell=-K}^K \Lambda_{k, \ell} \hat{f}_{k,\ell} {\rm e}^{2 \pi i ( k x_i + \ell y_j)/L}  ,   \nonumber
\\
  &&
  (- \Delta_N {\rm e}^{- L_N \dt} f )_{i,j} = \sum_{k,\ell=-K}^K ( - \lambda_{k, \ell}) {\rm e}^{- \dt \Lambda_{k, \ell} } \hat{f}_{k,\ell} {\rm e}^{2 \pi i ( k x_i + \ell y_j)/L} ,  \nonumber
\end{eqnarray}
so that
\begin{eqnarray}
   \left\langle L_N  f , - \Delta_N {\rm e}^{- L_N \dt} f  \right\rangle
   = L^2 \sum_{k, \ell=-K}^K ( - \lambda_{k, \ell}) \Lambda_{k, \ell}
   {\rm e}^{- \dt \Lambda_{k, \ell} }  | \hat{f}_{k,\ell} |^2 \ge 0 ,
\end{eqnarray}
since $- \lambda_{k, \ell} \ge 0$, $\Lambda_{k, \ell} \ge 0$, and ${\rm e}^{- \dt \Lambda_{k, \ell} } \ge 0$, for any $k, \ell$.

  The proof of \eqref{prop-1-0-2} and \eqref{prop-1-0-3} could be carried out in the same manner; the details are left to interested readers.

   For the analysis of $G^{(1)}_N f$, with its discrete Fourier expansion given by~\eqref{Fourier-2-2}, an application of Parseval equality gives
\begin{eqnarray}
  \| f \|_2^2 = L^2 \sum_{k,\ell=-K}^K  | \hat{f}_{k,\ell} |^2 ,  \quad
  \nrm{ G^{(1)}_N f  }_2^2 = L^2 \sum_{k,\ell=-K}^K  \Bigl( \frac{1 - \frac{1 - {\rm e}^{- \dt \Lambda_{k,\ell}} }{\dt \Lambda_{k,\ell}} }{1 - {\rm e}^{- \dt \Lambda_{k,\ell}} }   \Bigr)^2
   | \hat{f}_{k,\ell} |^2 .  \label{prop-1-5}
\end{eqnarray}
On the other hand, the following observation is available:
\begin{eqnarray}
     \frac{1 - \frac{1 - {\rm e}^{- \dt \Lambda_{k,\ell}} }{\dt \Lambda_{k,\ell}} }{1 - {\rm e}^{- \dt \Lambda_{k,l}} } = \frac{ g_1 (\dt \Lambda_{k,\ell}) }{ g_0 (\dt \Lambda_{k,\ell}) }
     \le \frac{1}{1 - {\rm e}^{-2}} ,
        \label{prop-1-6}
\end{eqnarray}
with an application of Lemma~\ref{lem:lem 1} in the second step. Its substitution into~\eqref{prop-1-5} results in
\begin{eqnarray}
  \| f \|_2^2 = L^2 \sum_{k,\ell=-K}^K  | \hat{f}_{k,\ell} |^2 ,  \quad
  \nrm{ G^{(1)}_N f  }_2^2 \le ( \frac{1}{1 - {\rm e}^{-2}} )^2 \| f \|_2^2 .  \label{prop-1-7}
\end{eqnarray}
Then we have proved the first inequality of~\eqref{prop-1-0-4}, with $C_4 = \frac{1}{1 - {\rm e}^{-2}}$.

  The analysis of $G^{(2)}_N f$ could be carried out in the same manner, and the second inequality of~\eqref{prop-1-0-4} is with $C_5 = \frac{1}{1 - {\rm e}^{-2}}$. This finishes the proof of Proposition~\ref{prop:prop 1}.

\section{Proof of Proposition~\ref{prop:prop 2} } \label{proof:Prop 2}


We begin with the following expansion:
\begin{eqnarray}
  \nabla_N \left( f_N (U^k) - f_N (u^k) \right) = \nabla_N \nabla_N \cdot \left( g_N (U^k) - g_N (u^k) \right) . \label{prop 2-4}
\end{eqnarray}
Meanwhile, we denote $U_N^k$, $u_N^k$ and $e_N^k$ as the continuous extension of $U^k$, $u^k$ and $e^k$, as the formula given by~\eqref{spectral-coll-projection-2}. We notice that $g_N$ is defined in the sense of collocation way, at a point-wise level. Since $g_N (U^k) - g_N (u^k)$ is the grid point interpolation of $g (U_N^k) - g (u_N^k)$, 
we apply~\eqref{spectral-coll-projection-5} (in Lemma~\ref{lemma:aliasing error-2}) to control the aliasing error. Subsequently, we arrive at
\begin{eqnarray}
  &&
  \| \nabla_N \nabla_N \cdot \left( g_N (U^k) - g_N (u^k) \right) \|_2
  = \| \nabla \nabla \cdot P_c^N \left( g (U_N^k) - g (u_N^k) \right) \|  \nonumber
\\
  &\le&
  C  \| P_c^N \left( g (U_N^k) - g (u_N^k) \right) \|_{H^2}
  \le C  \| g (U_N^k) - g (u_N^k) \|_{H^2} , \label{prop 2-5}
\end{eqnarray}
in which the fact that $2 > \frac{d}{2} =1$ has been used. On the other hand, we have the following expansion for $g (U_N^k) - g (u_N^k)$, in a similar form as~\eqref{scheme-ETD-3rd-stability-4-2}:
\begin{eqnarray}
   g (U_N^k) - g (u_N^k) =   \frac{\nabla e_N^k}{1+|\nabla u_N^k|^2}  +  \frac{ \nabla (U_N^k + u_N^k) \cdot \nabla e_N^k}{(1+|\nabla U_N^k |^2) (1+|\nabla u_N^k|^2)} \nabla U_N^k + \kappa \nabla e_N^k . \label{prop 2-6}
\end{eqnarray}
A repeated application of H\"older inequality and Sobobev inequality leads to the following estimates:
\begin{eqnarray}
   &&
   \| \frac{\nabla e_N^k}{1+|\nabla u_N^k|^2}  + \kappa \nabla e_N^k \|_{H^2}
   \le C ( \| U_N^k \|_{H^3}^2 + \| u_N^k \|_{H^3}^2 )   ( \| \nabla e_N^k \| + \| \Delta e_N^k \| + \| \nabla \Delta e_N^k \| ) ,  \label{prop 2-7-1}
\\
  &&
   \| \frac{ \nabla (U_N^k + u_N^k) \cdot \nabla e_N^k}{(1+|\nabla U_N^k |^2) (1+|\nabla u_N^k|^2)} \nabla U_N^k  \|_{H^2}  \nonumber
\\
  &&  \qquad
   \le C ( \| U_N^k \|_{H^3}^2 + \| u_N^k \|_{H^3}^2 )   ( \| \nabla e_N^k \| + \| \Delta e_N^k \| + \| \nabla \Delta e_N^k \| ) .  \label{prop 2-7-2}
\end{eqnarray}
Meanwhile, with the a-priori assumption~\eqref{a priori-0}, we have
\begin{eqnarray}
    \| U_N^k \|_{H^3} \le C*, \quad \| u_N^k \|_{H^3} \le \| U_N^k \|_{H^3}  + \| e_N^k \|_{H^3} \le \| U_N^k \|_{H^3}  + C_6 \| \nabla \Delta e_N^k \| \le C^* + C_6 := \tilde{C}_1 ,
\end{eqnarray}
in which $C_6$ is a constant associated with elliptic regularity: $\| e_N^k \|_{H^3} \le C_6 \| \nabla \Delta e_N^k \|$, since $\int_\Omega e_N^k \, d {\bf x} =0$. Then we arrive at
\begin{eqnarray}
  &&
  \| \nabla_N \left( f_N (U^k) - f_N (u^k) \right) \|_2 = \| \nabla_N \nabla_N \cdot \left( g_N (U^k) - g_N (u^k) \right) \|_2 \le  C  \| g (U_N^k) - g (u_N^k) \|_{H^2}  \nonumber
\\
  &\le&
   C ( \| U_N^k \|_{H^3}^2 + \| u_N^k \|_{H^3}^2 )   ( \| \nabla e_N^k \| + \| \Delta e_N^k \| + \| \nabla \Delta e_N^k \| )  \nonumber
\\
  &\le&
     C ( (C^*)^2 + \tilde{C}_1^2 ) C_6 \| \nabla \Delta e_N^k \|
   \le C ( (C^*)^2 + \tilde{C}_1^2 ) C_6 \| \nabla_N \Delta_N e^k \| . \label{prop 2-8}
\end{eqnarray}
As a result, \eqref{prop 2-1} has been established, by taking $C_0^{(2)} = C ( (C^*)^2 + \tilde{C}_1^2 ) C_6$. This finishes the proof of Proposition~\ref{prop:prop 2}.


	\bibliographystyle{plain}

\begin{thebibliography}{10}

\bibitem{Suli14}
B.~Benesova, C.~Melcher, and E.~Suli.
\newblock An implicit midpoint spectral approximation of nonlocal
  {Cahn-Hilliard} equations.
\newblock {\em Numer. Math.}, 52:1466--1496, 2014.

\bibitem{Beylkin98}
G.~Beylkin, J.M. Keiser, and L.~Vozovoi.
\newblock A new class of time discretization schemes for the solution of
  nonlinear {PDEs}.
\newblock {\em J. Comput. Phys.}, 147:362--387, 1998.

\bibitem{Boyd2001}
J.~Boyd.
\newblock {\em Chebyshev and Fourier Spectral Methods}.
\newblock Dover, New York, NY, 2001.

\bibitem{canuto82}
C.~Canuto and A.~Quarteroni.
\newblock Approximation results for orthogonal polynomials in {Sobolev} spaces.
\newblock {\em Math. Comp.}, 38:67--86, 1982.

\bibitem{chen12}
W.~Chen, S.~Conde, C.~Wang, X.~Wang, and S.M. Wise.
\newblock A linear energy stable scheme for a thin film model without slope
  selection.
\newblock {\em J. Sci. Comput.}, 52:546--562, 2012.

\bibitem{chen18b}
W.~Chen, W.~Li, Z.~Luo, C.~Wang, and X.~Wang.
\newblock A stabilized second order {ETD} multistep method for thin film growth
  model without slope selection.
\newblock {\em SIAM J. Numer. Anal.}, 2018.
\newblock Submitted and in review.

\bibitem{chen14}
W.~Chen, C.~Wang, X.~Wang, and S.M. Wise.
\newblock A linear iteration algorithm for energy stable second order scheme
  for a thin film model without slope selection.
\newblock {\em J. Sci. Comput.}, 59:574--601, 2014.

\bibitem{chenwang12}
W.~Chen and Y.~Wang.
\newblock A mixed finite element method for thin film epitaxy.
\newblock {\em Numer. Math.}, 122:771--793, 2012.

\bibitem{Cox02}
S.M. Cox and P.C. Matthews.
\newblock Exponential time differencing for stiff systems.
\newblock {\em J. Comput. Phys.}, 176:430--455, 2002.

\bibitem{E92}
W.~E.
\newblock Convergence of spectral methods for the {Burgers’} equation.
\newblock {\em SIAM J. Numer. Anal.}, 29:1520--1541, 1992.

\bibitem{E93}
W.~E.
\newblock Convergence of {Fourier} methods for {Navier-Stokes} equations.
\newblock {\em SIAM J. Numer. Anal.}, 30:650--674, 1993.

\bibitem{Ehrlich1966}
G.~Ehrlich and F.~G. Hudda.
\newblock Atomic view of surface diffusion: {Tungsten} on tungsten.
\newblock {\em J. Chem. Phys.}, 44:1036--1099, 1966.

\bibitem{eyre98}
D.J. Eyre.
\newblock Unconditionally gradient stable time marching the {Cahn-Hilliard}
  equation.
\newblock {\em MRS. Symp. Proc.}, 529:39, 1998.

\bibitem{feng17a}
W.~Feng, C.~Wang, S.M. Wise, and Z.~Zhang.
\newblock A second-order energy stable {Backward Differentiation Formula}
  method for the epitaxial thin film equation with slope selection.
\newblock {\em Numer. Methods Partial Differ. Equ.}, 34(6):1975--2007, 2018.

\bibitem{golubovic97}
L.~Golubovi\'{c}.
\newblock Interfacial coarsening in epitaxial growth models without slope
  selection.
\newblock {\em Phys. Rev. Lett}, 78:90--93, 1997.

\bibitem{Gottlieb1977}
D.~Gottlieb and S.A. Orszag.
\newblock {\em Numerical Analysis of Spectral Methods, Theory and
  Applications}.
\newblock SIAM, Philadelphia, PA, 1977.

\bibitem{gottlieb12a}
S.~Gottlieb, F.~Tone, C.~Wang, X.~Wang, and D.~Wirosoetisno.
\newblock Long time stability of a classical efficient scheme for two
  dimensional {Navier-Stokes} equations.
\newblock {\em SIAM J. Numer. Anal.}, 50:126--150, 2012.

\bibitem{gottlieb12b}
S.~Gottlieb and C.~Wang.
\newblock Stability and convergence analysis of fully discrete {Fourier}
  collocation spectral method for {3-D} viscous {Burgers'} equation.
\newblock {\em J. Sci. Comput.}, 53:102--128, 2012.

\bibitem{Hochbruck10}
M.~Hochbruck and A.~Ostermann.
\newblock Exponential integrators.
\newblock {\em Acta Numer.}, 19:209--286, 2010.

\bibitem{Hochbruck11}
M.~Hochbruck and A.~Ostermann.
\newblock Exponential multistep methods of {Adams-type}.
\newblock {\em BIT Numer. Math.}, 51:889--908, 2011.

\bibitem{Ju17}
L.~Ju, X.~Li, Z.~Qiao, and H.~Zhang.
\newblock Energy stability and convergence of exponential time differencing
  schemes for the epitaxial growth model without slope selection.
\newblock {\em Math. Comp.}, 87:1859--1885, 2018.

\bibitem{Ju14}
L.~Ju, X.~Liu, and W.~Leng.
\newblock Compact implicit integration factor methods for a family of
  semilinear fourth-order parabolic equations.
\newblock {\em Discrete Contin. Dyn. Syst. Ser. B}, 19:1667--1687, 2014.

\bibitem{Ju15a}
L.~Ju, J.~Zhang, and Q.~Du.
\newblock Fast and accurate algorithms for simulating coarsening dynamics of
  {Cahn-Hilliard} equations.
\newblock {\em Comput. Mat. Sci.}, 108:272--282, 2015.

\bibitem{Ju15b}
L.~Ju, J.~Zhang, L.~Zhu, and Q.~Du.
\newblock Fast explicit integration factor methods for semilinear parabolic
  equations.
\newblock {\em J. Sci. Comput.}, 62:431--455, 2015.

\bibitem{kohn06}
R.V. Kohn.
\newblock Energy-driven pattern formation.
\newblock {\em Proceedings of the International Congress of Mathematicians, M.
  Sanz-Sole, J. Soria, J.L. Varona, and J. Verdera, eds., European Mathematical
  Society Publishing House, Madrid}, 1:359--384, 2007.

\bibitem{kohn03}
R.V. Kohn and X.~Yan.
\newblock Upper bound on the coarsening rate for an epitaxial growth model.
\newblock {\em Comm. Pure Appl. Math.}, 56:1549--1564, 2003.

\bibitem{libo06}
B.~Li.
\newblock High-order surface relaxation versus the {Ehrlich-Schwoebel} effect.
\newblock {\em Nonlinearity}, 19:2581–2603, 2006.

\bibitem{libo03}
B.~Li and J.~Liu.
\newblock Thin film epitaxy with or without slope selection.
\newblock {\em Eur. J. Appl. Math.}, 14:713--743, 2003.

\bibitem{libo04}
B.~Li and J.~Liu.
\newblock Epitaxial growth without slope selection: energetics, coarsening, and
  dynamic scaling.
\newblock {\em J. Nonlinear Sci.}, 14:429--451, 2004.

\bibitem{LiD2017}
D.~Li and Z.~Qiao.
\newblock On second order semi-implicit fourier spectral methods for {2D
  Cahn-Hilliard} equations.
\newblock {\em J. Sci. Comput.}, 70:301--341, 2017.

\bibitem{LiD2017b}
D.~Li and Z.~Qiao.
\newblock On the stabilization size of semi-implicit fourier-spectral methods
  for {3D Cahn-Hilliard} equations.
\newblock {\em Commun. Math. Sci.}, 15:1489--1506, 2017.

\bibitem{LiD2016a}
D.~Li, Z.~Qiao, and T.~Tang.
\newblock Characterizing the stabilization size for semi-implicit
  {Fourier-spectral} method to phase field equations.
\newblock {\em SIAM J. Numer. Anal.}, 54:1653--1681, 2016.

\bibitem{LiD2017c}
D.~Li, Z.~Qiao, and T.~Tang.
\newblock Gradient bounds for a thin film epitaxy equation.
\newblock {\em J. Differential Equations}, 262:1720--1746, 2017.

\bibitem{LiW18}
W.~Li, W.~Chen, C.~Wang, Y.~Yan, and R.~He.
\newblock A second order energy stable linear scheme for a thin film model
  without slope selection.
\newblock {\em J. Sci. Comput.}, 76(3):1905--1937, 2018.

\bibitem{LiX17}
X.~Li, Z.~Qiao, and H.~Zhang.
\newblock Convergence of a fast explicit operator splitting method for the
  epitaxial growth model with slope selection.
\newblock {\em SIAM J. Numer. Anal.}, 55:265--285, 2017.

\bibitem{moldovan00}
D.~Moldovan and L.~Golubovic.
\newblock Interfacial coarsening dynamics in epitaxial growth with slope
  selection.
\newblock {\em Phys. Rev. E}, 61(6):6190, 2000.

\bibitem{qiao12}
Z.~Qiao, Z.~Sun, and Z.~Zhang.
\newblock The stability and convergence of two linearized finite difference
  schemes for the nonlinear epitaxial growth model.
\newblock {\em Numer. Methods Partial Differ. Equ.}, 28:1893--1915, 2012.

\bibitem{qiao15}
Z.~Qiao, Z.~Sun, and Z.~Zhang.
\newblock Stability and convergence of second-order schemes for the nonlinear
  epitaxial growth model without slope selection.
\newblock {\em Math. Comp.}, 84:653--674, 2015.

\bibitem{qiao17}
Z.~Qiao, C.~Wang, S.M. Wise, and Z.~Zhang.
\newblock Error analysis of a finite difference scheme for the epitaxial thin
  film growth model with slope selection with an improved convergence constant.
\newblock {\em Int. J. Numer. Anal. Model.}, 14:283--305, 2017.

\bibitem{qiao12b}
Z.~Qiao, Z.~Zhang, and T.~Tang.
\newblock An adaptive time-stepping strategy for the molecular beam epitaxy
  models.
\newblock {\em SIAM J. Sci. Comput.}, 33:1395--1414, 2012.

\bibitem{Schwoebel1969}
R.~L. Schwoebel.
\newblock Step motion on crystal surfaces: {II}.
\newblock {\em J. Appl. Phys.}, 40:614--618, 1969.

\bibitem{shen12}
J.~Shen, C.~Wang, X.~Wang, and S.M. Wise.
\newblock Second-order convex splitting schemes for gradient flows with
  {Ehrlich-Schwoebel} type energy: Application to thin film epitaxy.
\newblock {\em SIAM J. Numer. Anal.}, 50:105--125, 2012.

\bibitem{SongH2017}
H.~Song and C.-W. Shu.
\newblock Unconditional energy stability analysis of a second order
  implicit-explicit local discontinuous {Galerkin} method for the
  {Cahn-Hilliard} equation.
\newblock {\em J. Sci. Comput.}, 73:1178--1203, 2017.

\bibitem{wang10}
C.~Wang, X.~Wang, and S.M. Wise.
\newblock Unconditionally stable schemes for equations of thin film epitaxy.
\newblock {\em Discrete Contin. Dyn. Syst.}, 28:405--423, 2010.

\bibitem{wangx16}
X.~Wang, L.~Ju, and Q.~Du.
\newblock Efficient and stable exponential time differencing {Runge-Kutta}
  methods for phase field elastic bending energy models.
\newblock {\em J. Comput. Phys.}, 316:21--38, 2016.

\bibitem{xu06}
C.~Xu and T.~Tang.
\newblock Stability analysis of large time-stepping methods for epitaxial
  growth models.
\newblock {\em SIAM J. Numer. Anal.}, 44(4):1759--1779, 2006.

\bibitem{yang17b}
X.~Yang, J.~Zhao, and Q.~Wang.
\newblock Numerical approximations for the molecular beam epitaxial growth
  model based on the invariant energy quadratization method.
\newblock {\em J. Comput. Phys.}, 333:104--127, 2017.

\bibitem{zhu16}
L.~Zhu, L.~Ju, and W.~Zhao.
\newblock Fast high-order compact exponential time differencing {Runge-Kutta}
  methods for second-order semilinear parabolic equations.
\newblock {\em J. Sci. Comput.}, 67:1043--1065, 2016.

\end{thebibliography}

	\end{document}